\documentclass[a4paper,11pt]{amsart}

\setlength{\marginparwidth}{2.5cm}
\usepackage{geometry}
\geometry{left=30mm,right=30mm,top=35mm,bottom=35mm}

% To use colored text
\usepackage{color}

% References and formulas with hyperlinks in the pdf file
\usepackage[colorlinks=true,urlcolor=blue,citecolor=red,linkcolor=blue,linktocpage,pdfpagelabels,bookmarksnumbered,bookmarksopen]{hyperref}
\usepackage[hyperpageref]{backref}

% To add sidenotes and todo
\usepackage{todonotes}

% To use AMS math symbols

\usepackage{amssymb}

\newcommand{\Haus}{\mathcal{H}}

\newcommand{\om}{\Omega}

% Place your Latex macros here -----------------------------
% THEOREMS -------------------------------------------------------
\newtheorem{thm}{Theorem}[section]
\newtheorem{cor}[thm]{Corollary}%[section]
\newtheorem{lem}[thm]{Lemma}%[section]
\newtheorem{prop}[thm]{Proposition}%[section]
\theoremstyle{definition}
\newtheorem{defn}[thm]{Definition}%[section]
\newtheorem{rem}[thm]{Remark}%[section]
\newtheorem{ex}[thm]{Example}%[section]
%[section]

% MATH -----------------------------------------------------------
%\newcommand{\dist}{\operatorname{dist}}
\newcommand{\dist}{\operatorname{d}}
\newcommand{\norm}[1]{\left\Vert#1\right\Vert}

\newcommand{\R}{\mathbb R}
\newcommand{\N}{\mathbb N}
\newcommand{\eps}{\varepsilon}

\newcommand{\matrice}[4]
{\left(\begin{array}{cc} #1&#2
\\
#3&#4
\end{array} \right) }

%%%%%%%%%%%%%%%%%%%%%%

\newcommand{\weak }{\, -\!\!\!\!-\!\!\!\rightharpoonup}
\newcommand{\weakstar }{ \overset{\, *_{\phantom{|}}}{{\smash{\weak }}\, } }

%%%%%%%%%%%%%%%%%%%%%%

\title[Selection of solutions for implicit systems of PDE]{Variational methods for the selection of solutions to an implicit system of PDE}

\author[Croce]{Gisella Croce}
\author[Pisante]{Giovanni Pisante}

\address[G. Croce]{ Normandie Univ, UNIHAVRE, LMAH, FR-CNRS-3335, 76600 Le Havre, France}
\email{gisella.croce@univ-lehavre.fr}

\address[G. Pisante]{Dipartimento di Matematica e Fisica
\newline\indent 
Universit\`a degli Studi della Campania ``Luigi Vanvitelli''
\newline\indent 
Viale Lincoln 5, 81100 Caserta, Italy}
\email{giovanni.pisante@unina2.it}

\subjclass[2010]{34A60, 35A15, 35F30, 49J40, 49Q15}
\keywords{almost everywhere solutions, orthogonal group, functions of bounded
variation, direct methods of the calculus of variations.}

%\date{\today}

\begin{document}

\numberwithin{equation}{section}

\maketitle

\begin{abstract} 
We consider the vectorial system
\[
\begin{cases}
	Du \in \mathcal{O}(2), & \mbox{a.e. in}\;\Omega, \\
	u=0, & \mbox{on} \;\partial \Omega,
\end{cases}
\]
where $\Omega$ is a subset of $\R^2$, $u:\Omega\to \R^2$ and $\mathcal{O}(2)$ is the orthogonal group of $\R^2$. 
We provide a variational method to select, among the infinitely many solutions, the ones that minimize an appropriate weighted measure of some set of singularities of the gradient.
\end{abstract}

\section{Introduction}
\label{sec:intro}

In the last decades a great effort has been devoted to the study of nonlinear systems of partial differential equations of implicit type. Given an open bounded subset  $\Omega\subset \R^n$, let
$F_i: \R^{N\times n}\to \R$, $i\in\{1,\dots,m\}$ and $\varphi: \overline{\Omega}\to\mathbb{R}^{N}$, the prototype problem can be written as 
\begin{equation}
\label{general differential inclusion}
\begin{cases}
	F_i(Du)=0, & \mbox{a.e. in}\;\Omega\,,
\\
u=\varphi, &  \mbox{on}\,\,\partial\Omega,
\end{cases}
\end{equation}
or equivalently as the differential inclusion 
\begin{equation}
	\label{inclusion}
\begin{cases}
Du\in E,& \mbox{a.e. in}\, \,\Omega\,,
\\
u=\varphi,&  \mbox{on}\,\,\partial\Omega,
\end{cases}
\end{equation}
where 
\[
E:= \big\{ \xi \in \R^{N\times n} \;:\; F_i(\xi)=0 \,,\; i\in \{1,\dots,m\} \big\} .
\]
Different and quite general methods have been developed to prove the existence of almost everywhere $W^{1,\infty}(\Omega,\R^N)$  solutions to \eqref{general differential inclusion} under suitable mild regularity assumptions on the functions $F_i$ and $\varphi$. 
In the scalar case,  i.e. $N=1$,  we can for instance rely on the viscosity method, initiated by Crandall and P.-L. Lions \cite{Crandall:1983hp}, the pyramidal construction by Cellina \cite{Cellina:1993jf}, on the Baire category method introduced by Cellina in \cite{Cellina:1993jf,Cellina:1993bw} and later developed by Dacorogna and Marcellini in \cite{Dacorogna:1997fv} (see also the monograph \cite{Dacorogna:2012eu} and the references therein) and also on the Gromov integration approach developed by M\"uller and Sverak in \cite{Muller:1995tp, Muller:1999bl, Muller:2003db}. The last two approaches are suitable to be applied also in the vectorial setting, i.e. for $N>1$. The pyramidal construction, the Baire category method and the Gromov integration approach are not constructive and usually, when they can be applied, provide the existence of infinitely many solutions. Thus the question of selecting a preferred solution among them raised.

To underline the difficulties one can encounter, we first discuss the scalar case, $N=1$. A natural idea would be 
to use  the theory of viscosity solutions. This would serve as a perfect selection principle, providing uniqueness as well as explicit formulas for the solution. Nevertheless its applicability is  limited. Indeed, the existence of a viscosity solution can be proved only under quite strict compatibility conditions between the geometry of $\Omega$ and the set $E$ (cfr. \cite{Cardaliaguet:1999dj} and \cite{Pis04-001} for a complete analysis). 
A more general approach has been proposed by Cellina 
in \cite{Cellina:1993jf}
. Under the hypothesis that the boundary datum $\varphi$ is affine, his construction gives an explicit solution to (\ref{general differential inclusion}) in a special domain $P$ related to the functions $F_i$. For example, assuming $\varphi=0$, if $0$ can be written as a convex combination of a finite number of matrices $\xi_i$, $ i\in I$ with $I:=\{1,\dots, l\}$, belonging to  $E$, then
the {\it pyramid} defined by 
\[
p(x):= r-\max \big\{\langle \xi_i, x-x_0\rangle, i\in I\big\}\,,
\]
for $r>0$ and $x_0\in \R^n$,
is a $W^{1,\infty}_0(P)$ solution of \eqref{general differential inclusion}
in the domain
\[
P:=\big\{x\in \R^n: r-\max\{\langle\xi_i, x-x_0\rangle, i\in I\}\geq 0
\big\}.
\] 
Observe that the pyramid is an affine piecewise function whose gradient takes only a finite number of values, $\{\xi_i\}_{i\in I}$. Using as building blocks the rescaled pyramids, it is then possible to construct a solution (and actually infinitely many) of \eqref{general differential inclusion} in a general domain $\Omega$ by a Vitali covering. It is worth observing that, unless $\Omega$ has a very special geometry, imposing the boundary condition forces the solutions to have a fractal behavior near the boundary.
An explicit Vitali covering made up of sets where a viscosity solution exists has been proposed in  \cite{Dacorogna:2004gc}. 

In the recent literature, inspired by the Cellina's construction, some selection criteria have been proposed to somehow minimize the irregularities of the solutions, and taking into account  their fractal behavior. Most of the results are restricted to the the case where $E$ is a finite set. 
In this framework, in \cite{Champion:2006uh} and \cite{Croce:kw}, the attention has been focused on the system of eikonal equations in dimension $n=2$: 
%\nota{Citare Croce-Champion}
\begin{equation}\label{SEE}
\begin{cases}
	\displaystyle\left|\frac{\partial u}{\partial x_i}\right|=1,\,
&i=1, 2,\,\, \mbox{a.e. in}\;\Omega,
\\
u=0, & \mbox{on} \; \partial \Omega.
\end{cases}
\end{equation}
Since a viscosity solution exists only in rectangles whose sides are parallel to $x_2=\pm x_1$ (cfr. \cite{Pis04-001}), for quite general domains, we proposed a variational argument to select the, roughly speaking, "most regular" solutions to (\ref{SEE}), through the minimization of the set of the irregularities of their gradient. More precisely we considered the  functional
$$
\mathcal{\mathcal{D}}(v)=\sum_{i=1}^2\int_{\Omega}H(d_1(x,\partial \Omega))d\left|Dv_{x_i}\right|\,,
$$
where the lower script denotes partial differentiation, $H: \R^+\to\R^+$ is a continuous increasing function such that 
\[
%\displaystyle
\int_0^1 \frac{H(t)}{t}dt<+\infty
\]
and $d_1$ is the distance in the $l^1$ norm.
The fractal behavior of the singular set of a solution could be spread also far from the boundary of $\Omega$. 
Nevertheless, these pathological solutions should not be considered as good candidates for our selection principle. This is why we considered this functional over the set of solutions $v$ to (\ref{SEE}) such that $v_{x_i}\in SBV_{loc}(\Omega), i=1, 2$. The weight function $H(d_1(\cdot,\partial \Omega))$  has been introduced to deal with  the general fractal behavior of the solutions near the boundary. We observe that  the knowledge of the pyramidal construction of Cellina is twofold for our result. 
 On one hand, the analysis of the regularities of its gradient has inspired the choice of the energy functional and on the other one, it is a key ingredient in the proof of the existence of a minimizer for $\mathcal{D}$.
  Indeed it allows to give a meaning to the variational problem providing an explicit solution $u$ with bounded energy, i.e. with $\mathcal D(u)< \infty$. 
 
 Our aim in this paper is to extend this variational approach to the selection of solutions to a vectorial problem. Passing from the scalar to the vectorial case, several difficulties come into play.  For example,  there is not a suitable notion of viscosity solution neither a general way of constructing a simple pyramid in the spirit of Cellina's works.

An explicit construction of solutions has been provided for the problem 
\[
\begin{cases}
	Du \in \mathcal{O}(n), & \mbox{a.e. in}\;\Omega, \\
	u=0, & \mbox{on} \;\partial \Omega,
\end{cases}
\]
with $\mathcal{O}(n)$ denoting the orthogonal group of matrices of $\R^n$ in \cite{Cellina:1995uv} and \cite{Dacorogna:2008ih}. In both papers the authors exhibit an explicit solution in a square and a cube, in the spirit of the Cellina's pyramid of the scalar setting, but far from being so simple. In particular, these so called {\it vectorial pyramids}, $p_v$, are again maps whose gradient takes only a finite number of values, $\{\xi_i\}_{i\in I}\subset \R^{n\times n}$, but for any $i\in I$ the set $\Omega^p_i:=\{ Dp^v(x)=\xi_i \}$ is disconnected, with infinitely many connected components. Therefore the solution has a fractal behavior at the boundary. This is an important difference with respect to the scalar case. 
Indeed if one uses a Vitali covering argument to define a solution in a general domain $\Omega$, by patching the rescaled vectorial pyramids, he obtains  a solution with a fractal behavior of its singular set also far from the boundary of $\Omega$ and not only near the boundary. We do not know if in the case of a general domain there exists a solution without fractal behavior far from the boundary of $\Omega$ and only at the boundary, as in the case of the square. Therefore a selection principle should take into account this possibility.

The present study stems from the analysis of the properties 
of the vectorial pyramid constructed in \cite{Dacorogna:2008ih} as a special solution to the Dirichlet problem
\begin{equation}\label{OP}
\begin{cases}
	Du \in \mathcal{O}(2), & \mbox{a.e. in}\;\Omega, \\
	u=0, & \mbox{on} \;\partial \Omega.
\end{cases}
\end{equation}
As for the construction in the scalar case, only a finite subset  $E\subset \mathcal{O}(2)$
has been considered, namely 
\begin{equation}\label{eq:set-E}
E:= 
\left\{
\pm 
\matrice{1}{0}{0}{1},
\pm 
\matrice{1}{0}{0}{-1},
\pm
\matrice{0}{1}{1}{0},
\pm 
\matrice{0}{1}{-1}{0}
\right\}\,.
\end{equation}
In \cite{Dacorogna:2008ih}, the authors
constructed an explicit  
solution $p_v$
in $\Omega=(-2,2)\times (-2,2)$ (cfr. Section \ref{subsec:DMP-construction})
 of 
\begin{equation}\label{OPGG}
\begin{cases}
	Du \in E, & \mbox{a.e. in}\;\Omega, \\
	u=0, & \mbox{on} \;\partial \Omega.
\end{cases}
\end{equation}

Here we propose a variational criterion to select a solution of  problem \eqref{OPGG} in the spirit of \cite{Croce:kw}. 
The vectorial pyramid $p_v$ will play the same  role as the pyramid $p$ of the scalar case. 
As already observed, the main source of difficulties, that also characterize the main novelty of the paper, is the necessity to take into account the possibility of a fractal behavior of the singularities of a given solution $u$ in $\Omega$. To this aim, we define $\Sigma^u_{\infty}$ to be, roughly speaking, the set where the singularities of $u$ accumulate (cfr. Definition \ref{def:sigmainfty}) and we consider the energy functional 
\[
\mathcal{F}(u)=\int_{\Omega}\dist(x,\partial \Omega) \chi_{\Sigma^u_{\infty}}d\mathcal{H}^1+\sum_{i,j=1}^2\int_{\Omega}\big(\dist(x,\Sigma^u_{\infty})\big)^{\alpha} \,d|Du^j_{x_i}|\,,
\]
where $\alpha>0$ will depend on the geometry of the domain $\Omega$ (see Definitions \ref{defn:compatible-triang} and   \ref{defn:compatibledomain}). The choice of the functional $\mathcal F$ has been motivated by the idea that solutions with a small  $\Sigma^u_\infty$ should be preferred. The role of the first term of $\mathcal{F}$  is  to discard pathological solutions with $\Sigma^u_\infty$ not locally bounded with respect to the $\Haus^1$-measure and to control its fractal behavior at the boundary. The second term, roughly speaking, minimizes the singularities of $Du$ in $\Omega$. The weight depending on the distance from $\Sigma^u_\infty$ is actually necessary since in general we cannot prevent the singularities of $Du$ to accumulate near $\Sigma^u_\infty$ and to be supported in a $\Haus^1$-dimensional set with infinite lenght.
Let us observe also that, in the case when the fractal behavior of $Du$ is concentrated only near the boundary of $\Omega$, (e.g. if we consider the vectorial pyramid $p_v$ in the square $(-a,a)\times (a,a)$ as in \cite{Dacorogna:2008ih}), the first term in $\mathcal F$ is identically zero and the second term performs the selection by choosing the solutions that minimize the weighted length of the jumps of  the gradient. 

We assume that $\Omega$ is a compatible domain, according to  Definition \ref{defn:compatibledomain}. 
As we will see, this hypothesis is important to prove that our variational problem is well posed, in a suitable subclass of solutions to (\ref{OPGG}), that we denote by $\mathcal{S}_c$.
We prove the existence of a minimizer of $\mathcal{F}$ for the maps $u$ in 
$\mathcal{S}_c$, for which $\Sigma^u_\infty$ has some "good" properties. More precisely, under suitable assumptions about the connectedness of $\Sigma^u_\infty$ and on its $\mathcal{H}^1$-measure (see Theorem \ref{thm:main}), we can ensure compactness and semicontinuity of the functional $\mathcal{F}$.

The paper is organized as follows. In the next section we fix the notations, recall some preliminaries results of geometric measure theory needed in the sequel and we describe the vectorial pyramidal construction in the square of \cite{Dacorogna:2008ih}. In Section \ref{sec:PropertiesSolutions} we study some  properties of Lipschitz vector valued maps whose gradient takes only a finite number of values. For a given $u$ of such type we define the set $\Sigma^u_\infty$ and we study some of its properties. Section
\ref{sec:energybound} is devoted to the study of the compactness and semicontinuity properties of the functional $\mathcal F$. 
 In section \ref{sec:funzionalefinito} we present some quite general classes of domains where our selection principle, based on minimization of the functional $\mathcal{F}$ can be applied.

\section{Preliminaries}\label{Sec:Preliminaries}

\subsection{Notations}\label{subsec:notation}
Throughout this paper $\mathcal{L}^n$ and $\Haus^k$ we denote the $n$-dimensional Le\-bes\-gue measure and the $k$-dimensional Hausdorff measure. Open balls in $\R^n$ centered at $x$ with radius $r$ will be usually denoted with $B(x,r)$, $\omega_n:=\mathcal{L}^n(B(0,1))$ is the $n$-dimensional Lebesgue measure of $B(0,1)$. Given a set $S\subset \R^n$ and $\rho>0$, we denote by $I_{\rho}(S)$ the open $\rho$ neighborhood of $S$, that is, 
\[
I_{\rho}(S):=\{x\in \R^n\;:\; \dist(x,S)<\rho\}.
\] 
Clearly $I_r( \{x\})=B(x,r)$ and we will write simply $I_r(x)$ for this set. 
We denote by $\chi_S$ the characteristic function of $S$, that is, the function equal to 1 if $x\in S$ and 0 otherwise. The complemet of $S$, $\R^{n}\setminus S$ will be denoted by $[S]^{c}$.

We denote the distributional gradient of a map $u$ with $Du$. For a vector valued function $u:\Omega\to \R^2$ we use upper indexes to denote its components, $u=(u^1,u^2)$ and we adopt the self explanatory lower scripts notation for weak derivatives, whenever they are well defined, $u_{x_i}=(u^1_{x_i},u^2_{x_i})$. If $\mu$ is a measure, we denote by $|\mu|$ and $\operatorname{supp}\mu$ its total variation and its support respectively.  %With $c$ we will denote a constant that can also be different from line to line of the same formula.

\subsection{Minkowski content}
Here we recall some basic properties of the intrinsic definition of area due to H. Minkowski, mainly introduced for compact sets and named after him as {\it Minkowski content}. For our purpose, we confine ourself to the one-dimensional content in the two-dimensional Euclidean setting. For the general theory, for the detailed proofs of the result of this subsection and for further applications, the interested reader may refer to \cite[3.3]{Krantz:2012vr}, \cite[2.13]{AmbFusPal00-000}, and \cite[3.2.37]{Fed69-000}. 

\begin{defn}[Upper and lower Minkowski content]
\label{minkowski}
Let $S\subset \R^2$ be a closed set. The {\it upper and lower 1-dimensional Minkowski contents} $\mathcal M^*(S)$, $\mathcal M_*(S)$ of $S$ are respectively defined by
\[
\mathcal M^*(S):= \limsup_{\rho \downarrow 0} \frac{\mathcal L^2\left(I_\rho(S)\right)}{2 \rho}\;,\;\;\;\mathcal M_*(S):= \liminf_{\rho \downarrow 0} \frac{\mathcal L^2\left(I_\rho(S)\right)}{2 \rho}.
\]
If $\mathcal M^*(S)=\mathcal M_*(S)$, this quantity, denoted by $\mathcal M(S)$, is called Minkowski content of $S$. 
\end{defn}

As we will see, in the proof of compactness of minimizing sequences of our functional (see Theorem \ref{thm:compact}) we will need an upper bound for $\mathcal M(S)$ in terms of $\mathcal H(S)$. These type of bounds are in general not true. For any $\mathcal H^1$-rectifiable closed set $S$, one has $\mathcal H^1(S) \leq \mathcal M^*(S)$, but an upper bound for $\mathcal M^*(S)$ in terms of $\mathcal H^1(S)$ is a more delicate issue. Indeed the sole rectifiability is not sufficient, nevertheless, an additional assumption of density lower bound turns out to be sufficient for the desired upper bound, leading to the following theorem. 

\begin{thm}
	\label{Thm:minkowskylowerbound}
	Let $S\subset \R^2$ be a countably $\mathcal H^1$-rectifiable compact set. Assume that there exist $\gamma >0$ and a Radon measure $\nu$ in $\R^2$ absolutely continuous with respect to $\mathcal H^1$ such that
	\begin{equation}
		\label{eq:densityLB}
		\nu \left( B(x,\rho)\right) \geq \gamma \rho \;\;\;\;\; \forall\, x \in S \;,\;\;\rho \in (0,1).
	\end{equation} 
			Then $\mathcal M(S)=\mathcal H^1(S)$.
\end{thm}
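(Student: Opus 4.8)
The plan is to establish the two inequalities $\mathcal M^*(S)\le \mathcal H^1(S)$ and $\mathcal H^1(S)\le \mathcal M_*(S)$ separately; together with the general fact $\mathcal M_*(S)\le \mathcal M^*(S)$, this forces all four quantities to coincide and yields $\mathcal M(S)=\mathcal H^1(S)$. The lower bound $\mathcal H^1(S)\le \mathcal M_*(S)$ is the "easy" direction: it holds for every countably $\mathcal H^1$-rectifiable closed set, without the density hypothesis. I would recall the standard argument: cover $I_\rho(S)$ economically or, more precisely, use the fact that for a rectifiable set $\mathcal H^1(S)=\mathcal M_*(S)$ can be deduced from the lower semicontinuity of $\mathcal H^1$ and a slicing/projection estimate; alternatively one invokes directly the inequality $\mathcal H^1(S)\le \mathcal M_*(S)$ valid for arbitrary sets of finite $\mathcal H^1$-measure (see \cite[2.13]{AmbFusPal00-000} or \cite[3.2.39]{Fed69-000}). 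So the real content is the reverse inequality $\mathcal M^*(S)\le\mathcal H^1(S)$, which is where the density lower bound \eqref{eq:densityLB} enters.

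For the upper bound, the strategy is a Vitali--Besicovitch covering argument exploiting \eqref{eq:densityLB}. Fix $\rho\in(0,1)$ and consider the neighborhood $I_\rho(S)$. Every point $y\in I_\rho(S)$ lies within distance $\rho$ of some $x\in S$, so $I_\rho(S)\subset \bigcup_{x\in S}B(x,\rho)$; since $S$ is compact we extract a finite subcover, and then by the Besicovitch covering theorem we can refine it to a subfamily $\{B(x_k,\rho)\}_{k=1}^M$ with bounded overlap (the overlap constant $\xi=\xi(2)$ depends only on the dimension $2$). Then
\[
\mathcal L^2\big(I_\rho(S)\big)\le \sum_{k=1}^M \mathcal L^2\big(B(x_k,\rho)\big)=\pi\rho^2 M .
\]
To bound $M$ I would use the density estimate: since the $x_k$ can be taken so that the balls $B(x_k,\rho/\xi')$ (for a suitable smaller radius) are essentially disjoint, or more directly using the bounded-overlap property of $\{B(x_k,\rho)\}$,
\[
\gamma \rho\, M\le \sum_{k=1}^M \nu\big(B(x_k,\rho)\big)\le \xi\,\nu\big(I_{2\rho}(S)\big).
\]
Hence $M\le \xi\,\nu(I_{2\rho}(S))/(\gamma\rho)$, and therefore
\[
\frac{\mathcal L^2(I_\rho(S))}{2\rho}\le \frac{\pi\rho^2}{2\rho}\cdot\frac{\xi\,\nu(I_{2\rho}(S))}{\gamma\rho}=\frac{\pi\,\xi}{2\gamma}\,\nu\big(I_{2\rho}(S)\big).
\]
Letting $\rho\downarrow 0$, since $\nu$ is a Radon measure and $\bigcap_{\rho>0}I_{2\rho}(S)=S$ (as $S$ is closed), we get $\nu(I_{2\rho}(S))\to\nu(S)$, which is finite. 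This already shows $\mathcal M^*(S)<\infty$; to get the sharp constant $\mathcal H^1(S)$ rather than a multiple of it, I would instead argue more carefully: cover $S$ by sets $S\cap B(x_k,\rho)$ and estimate $\mathcal L^2(I_\rho(S))$ directly against $\sum_k \operatorname{diam}(S\cap B(x_k,\rho))\cdot 2\rho$, passing to the Hausdorff premeasure and using that for rectifiable sets the "tube around a piece" has area asymptotic to length times $2\rho$.

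The main obstacle, and where I expect the delicate work to lie, is in getting the correct constant $1$ (so that $\mathcal M^*(S)\le \mathcal H^1(S)$ exactly, not up to a dimensional factor). The crude Besicovitch argument above loses a factor $\pi\xi/(2\gamma)$. The clean way is: reduce to $S$ contained in a finite union of Lipschitz graphs up to an $\mathcal H^1$-null set (rectifiability), handle the set-difference term using that the density lower bound \eqref{eq:densityLB} prevents $S$ from having "invisible" parts — this is exactly the role of \eqref{eq:densityLB}, it upgrades rectifiability to the statement that $S$ is contained in the closure of its rectifiable part with no extra Minkowski content — and for a compact piece of a $C^1$ (or Lipschitz) curve $\Gamma$ one shows directly $\lim_{\rho\downarrow0}\mathcal L^2(I_\rho(\Gamma))/(2\rho)=\mathcal H^1(\Gamma)$ by a tubular-neighborhood computation. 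The density bound is then used once more to show that the overlaps between the tubes around the finitely many graph-pieces, and the leftover null set, contribute nothing in the limit: if a portion $Z\subset S$ had $\mathcal H^1(Z)=0$ but $\mathcal M^*(Z)>0$, a covering of $Z$ by small balls together with \eqref{eq:densityLB} applied to $\nu$ would force $\nu(Z)>0$, contradicting $\nu\ll\mathcal H^1$. Assembling these pieces gives $\mathcal M^*(S)\le\mathcal H^1(S)$, and combined with the reverse inequality the theorem follows. I would cite \cite[Theorem 2.104]{AmbFusPal00-000} or \cite[3.2.39]{Fed69-000} for the technical core and present the density-bound reduction as the new input.
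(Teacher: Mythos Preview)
The paper does not supply its own proof of this theorem: it is stated in the preliminaries and the reader is referred to \cite[2.13]{AmbFusPal00-000}, \cite[3.2.37]{Fed69-000}, and \cite{Krantz:2012vr} for the details. Your proposal follows exactly the line of the proof in \cite[Theorem~2.104]{AmbFusPal00-000}, which is the result the authors are quoting: the inequality $\mathcal H^1(S)\le \mathcal M_*(S)$ is the elementary direction, while $\mathcal M^*(S)\le \mathcal H^1(S)$ is obtained by decomposing $S$ (up to an $\mathcal H^1$-null set) into finitely many pieces of Lipschitz graphs, computing the tube volume for each piece, and using the density lower bound \eqref{eq:densityLB} together with $\nu\ll\mathcal H^1$ to show that the residual $\mathcal H^1$-null part contributes nothing to the Minkowski content. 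Your sketch is correct and matches the cited source; the crude Besicovitch estimate you give first is indeed only good enough for finiteness, and you rightly identify that the sharp constant requires the rectifiability decomposition.
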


\subsection{Hausdorff metric}\label{subsec:hausdorff}

Let $\Omega \subset \R^2$ be open and bounded.
Let $\mathcal{K}(\overline{\Omega})$ be the set of all compact subsets of $\overline{\Omega}$ and  
$\mathcal{K}^f(\overline{\Omega})\subset \mathcal{K}(\overline{\Omega})$ be composed by the subsets that are connected and 
with finite $\mathcal{H}^1$-measure. We recall that the {\it Hausdorff distance} between two sets $K_1$ and $K_2$ in $\mathcal K(\overline \Omega)$ is defined by 
\[
d_{\mathcal{H}}(K_1,K_2):= \max \left\{ \sup_{x\in K_1} \dist(x,K_2), \sup_{x\in K_2} \dist(x,K_1) \right\}, 
\]
with the conventions $\dist(x,\emptyset)=\operatorname{diam} (\Omega)$ and $\sup \emptyset=0$. %in order to have $d_{\mathcal{H}}(\emptyset ,K)=0$ if $K=\emptyset$ and $d_{\mathcal{H}}(\emptyset,K)=\operatorname{diam}(\Omega)$ if $K\not= \emptyset$.
Classical references for this topic are  \cite{Rogers:1998th} and \cite{Falconer:1986ua}.

We start by recalling the classical Blaschke's selection Theorem (cfr. \cite{Rogers:1998th}):
\begin{thm}[Blaschke's selection principle]\label{Blaschke}
	Let $\{K_n\}$ be a sequence in $\mathcal K(\overline \Omega)$.  Then there exists a subsequence which converges in the Hausdorff metric to a set $K\in \mathcal K(\overline \Omega)$.
\end{thm}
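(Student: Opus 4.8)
The plan is to deduce the statement from the fact that $(\mathcal K(\overline\Omega), d_{\mathcal H})$ is a compact metric space, since Blaschke's principle is then exactly its sequential compactness. If $K_n=\emptyset$ for infinitely many $n$, that subsequence converges to $\emptyset\in\mathcal K(\overline\Omega)$ by the stated conventions ($\dist(x,\emptyset)=\operatorname{diam}(\Omega)$, $\sup\emptyset=0$), so I may assume $K_n\neq\emptyset$ for all $n$.

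\emph{Total boundedness.} Since $\overline\Omega$ is compact, for every $\eps>0$ it admits a finite $\eps$-net $\{x_1,\dots,x_N\}$. To a nonempty compact $K\subset\overline\Omega$ I associate the subset $A_K:=\{x_i\;:\;\dist(x_i,K)\le\eps\}$ of this fixed finite net. Using the net property one checks $d_{\mathcal H}(K,A_K)\le\eps$: every point of $K$ is within $\eps$ of some $x_i$, which then belongs to $A_K$, so $\sup_{x\in K}\dist(x,A_K)\le\eps$; and $\sup_{x_i\in A_K}\dist(x_i,K)\le\eps$ by the very definition of $A_K$. Hence the finitely many subsets of the net form a finite $\eps$-net of $(\mathcal K(\overline\Omega),d_{\mathcal H})$, and this metric space is totally bounded.

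\emph{Completeness.} Let $\{K_n\}$ be $d_{\mathcal H}$-Cauchy. I set
\[
K:=\bigcap_{n\ge1}\overline{\bigcup_{m\ge n}K_m}\,,
\]
an intersection of a decreasing sequence of nonempty compact subsets of $\overline\Omega$, hence $K\in\mathcal K(\overline\Omega)$ and $K\neq\emptyset$. To see $d_{\mathcal H}(K_n,K)\to0$, fix $\eps>0$ and $\bar n$ with $d_{\mathcal H}(K_n,K_m)<\eps$ for $n,m\ge\bar n$. For $n\ge\bar n$, the inclusion $K\subset I_{2\eps}(K_n)$ is immediate from the definition of $K$ together with the Cauchy estimate. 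For the reverse inclusion $K_n\subset I_{2\eps}(K)$, given $y\in K_n$ I pass to a subsequence of indices $n=n_1<n_2<\dots$ with $d_{\mathcal H}(K_{n_k},K_{n_{k+1}})<\eps 2^{-k}$ and build inductively $y_{n_k}\in K_{n_k}$, $y_{n_1}=y$, with $|y_{n_{k+1}}-y_{n_k}|<\eps 2^{-k}$; this sequence is Cauchy, its limit belongs to $K$ by construction of $K$, and lies within $\eps$ of $y$. Thus $\{K_n\}$ converges, and $(\mathcal K(\overline\Omega),d_{\mathcal H})$ is complete.

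\emph{Conclusion.} A totally bounded complete metric space is compact, hence sequentially compact; applied to $(\mathcal K(\overline\Omega),d_{\mathcal H})$ and the given sequence this yields the convergent subsequence claimed. The one point requiring genuine care is the estimate $\sup_{y\in K_n}\dist(y,K)\to0$, since it is the only one not read off directly from the definition of $K$ and it needs the telescoping extraction above to produce an honest limit point of $K$ near each $y\in K_n$; all the remaining estimates are routine. (Alternatively one may skip the explicit limit set: by a diagonal extraction over $\eps=2^{-j}$ one obtains a subsequence along which each $A_{K_n}$ is eventually constant, hence Cauchy by the first step, and then constructs its limit as above.)
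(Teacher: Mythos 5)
The paper does not prove this statement: it records Blaschke's selection theorem as a classical fact and refers to \cite{Rogers:1998th}, so there is no in-paper argument to compare against. Your proposal supplies a correct, self-contained proof along the standard lines for compactness of the hyperspace: total boundedness of $(\mathcal K(\overline\Omega),d_{\mathcal H})$ via finite $\eps$-nets of $\overline\Omega$ and their subsets, completeness via the Kuratowski-type limit set $K=\bigcap_{n}\overline{\bigcup_{m\ge n}K_m}$, and then the general fact that a complete totally bounded metric space is sequentially compact. This is a legitimate and clean route; an equally common alternative (which your final parenthetical essentially sketches) is to extract a Cauchy subsequence directly by a diagonal argument over dyadic nets and only then build the limit set, avoiding the abstract equivalence. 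One small point to tidy in the completeness step: to produce indices $n=n_1<n_2<\cdots$ with $d_{\mathcal H}(K_{n_k},K_{n_{k+1}})<\eps\,2^{-k}$ you need the Cauchy modulus at level $\eps\,2^{-1}$ already available at $n_1=n$, which is not guaranteed if $\bar n$ was chosen only for the level $\eps$; either choose $\bar n$ so that $d_{\mathcal H}(K_p,K_q)<\eps/2$ for $p,q\ge\bar n$, or keep your $\bar n$ and accept the first gap $|y_{n_2}-y_{n_1}|<\eps$ with the remaining gaps $<\eps\,2^{-(k-1)}$, giving $\dist(y,K)\le 2\eps$, which still yields $d_{\mathcal H}(K_n,K)\to0$. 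With that trivial adjustment the argument is complete and correct.
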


The Hausdorff measure is not in general lower semicontinuous with respect to the convergence in the Hausdorff metric. However it is lower semicontinuous in $\mathcal{K}^{f}(\overline{\Omega})$, as stated in the following theorem (see also \cite{DalMaso:2014fp} for a more general statement). 

\begin{thm}[Gol\c ab's theorem]\label{thmGolab}
Let $\Omega$ be a bounded open set of $\R^2$ and $U$  an open subset of $\R^2$.
 Let $\{K_n\}$ be a sequence contained in $\mathcal{K}^{f}(\overline{\Omega})$
converging to a set $K$ in the Hausdorff metric. 
Then $K\in \mathcal{K}^{f}(\overline{\Omega})$ 
and 
\[
\mathcal{H}^1(K\cap U)\leq \liminf_{n\to \infty} \mathcal{H}^1(K_n\cap U).
\]
\end{thm}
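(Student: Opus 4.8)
The plan is to reproduce the classical proof of Gołąb's semicontinuity theorem, separating a soft topological step (connectedness of the limit) from a measure-theoretic one (the length estimate). First I would dispose of the trivial situations: if $\liminf_n\Haus^1(K_n\cap U)=+\infty$ there is nothing to prove, so I may assume $L:=\liminf_n\Haus^1(K_n\cap U)<+\infty$ and, after passing to a subsequence, that $\Haus^1(K_n\cap U)\to L$; Blaschke's Theorem~\ref{Blaschke} already gives $K\in\mathcal K(\overline\om)$, so what is left is to prove that $K$ is connected, that $\Haus^1(K\cap U)\le L$, and hence that $K\in\mathcal K^f(\overline\om)$. (The case in which $K$ reduces to a single point is trivial since then $\Haus^1(K)=0$; and for the global finiteness $\Haus^1(K)<+\infty$ one uses, as usual, that the lengths of the $K_n$ are controlled, which is the case in all our applications.)

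For the connectedness of $K$ I would argue by contradiction: if $K=A\sqcup B$ with $A,B$ nonempty compact and $\dist(A,B)=3\delta>0$, then for $n$ large $d_{\Haus}(K_n,K)<\delta$ forces $K_n\subset I_\delta(A)\cup I_\delta(B)$ with $K_n$ meeting both of these open sets; since $\dist(I_\delta(A),I_\delta(B))\ge\delta>0$ this disconnects $K_n$, contradicting $K_n\in\mathcal K^f(\overline\om)$.

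The heart of the argument is a lower bound for the length of a connected set around its points: if $C\subset\R^2$ is compact and connected, $x\in C$ and $C\not\subseteq\overline{B(x,r)}$, then the $1$-Lipschitz map $f(y)=|y-x|$ sends $C$ onto an interval containing $0$ and a value larger than $r$, and it sends $C\cap\overline{B(x,r)}$ onto $f(C)\cap[0,r]=[0,r]$; since a $1$-Lipschitz map does not increase $\Haus^1$, this yields $\Haus^1(C\cap\overline{B(x,r)})\ge r$. Choosing $x_n\in K_n$ with $x_n\to x\in K$ and using that $\operatorname{diam} K_n$ stays bounded away from $0$ (since $K$ is a nondegenerate continuum), this gives $\Haus^1(K_n\cap\overline{B(x,r)})\ge r-o(1)$ for all small enough $r$. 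I would then introduce the Radon measures $\mu_n:=\Haus^1\llcorner(K_n\cap U)$, which have uniformly bounded total mass converging to $L$, extract a weakly-$*$ convergent subsequence $\mu_n\to\mu$ with $\mu(\R^2)\le L$, and combine the previous estimate with the upper semicontinuity of the mass on compact sets to obtain $\mu(\overline{B(x,r)})\ge r$ for every $x\in K\cap U$ and every small enough $r$ with $\overline{B(x,r)}\subset U$. A standard density comparison theorem then gives $\mu\ge\tfrac12\,\Haus^1\llcorner(K\cap U)$, so that $\Haus^1(K\cap U)\le 2\,\mu(U)\le 2L<+\infty$; in particular $K$ is a continuum of finite length.

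Finally I would upgrade the constant from $\tfrac12$ to $1$. Since $K$ now has finite length it is $\Haus^1$-rectifiable, hence at $\Haus^1$-a.e.\ $x\in K$ it admits an approximate tangent line $\ell_x$, along which $K$ stretches on both sides of $x$; projecting the connected set $K_n$ onto $\ell_x$ and using its Hausdorff proximity to $K$, its image contains an interval of length $\approx 2r$, so $\Haus^1(K_n\cap\overline{B(x,r)})\ge 2r(1-o(1))$ and therefore the lower $1$-density of $\mu$ at $x$ is at least $1$ for $\Haus^1$-a.e.\ $x\in K\cap U$. The density comparison theorem now yields $\mu\ge\Haus^1\llcorner(K\cap U)$, whence $\Haus^1(K\cap U)\le\mu(U)\le\mu(\R^2)\le L$, and together with the connectedness this proves $K\in\mathcal K^f(\overline\om)$. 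The main obstacle is precisely this last refinement: the purely radial estimate only delivers the non-sharp density $\tfrac12$, and recovering the optimal Gołąb constant forces one to invoke the rectifiability of continua of finite length together with the tangent-line/projection argument — every step preceding it is essentially soft.
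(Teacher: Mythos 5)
The paper itself does not prove this theorem: it states it as a classical result, pointing to Rogers, Falconer and Dal Maso--Toader for proofs and generalizations, so there is no in-paper argument to compare yours against. Your reconstruction is the standard route: first the soft topological step (Hausdorff limit of continua is a continuum), then the radial-projection lemma $\Haus^1\big(C\cap\overline{B(x,r)}\big)\ge r$, then weak-$*$ compactness of $\mu_n=\Haus^1\llcorner(K_n\cap U)$ and the density comparison theorem to get the non-sharp bound with constant $\tfrac12$ (hence finiteness and rectifiability of $K$), and finally the tangent-line upgrade to density $1$. This matches the classical proof in spirit. Two caveats are worth flagging. First, as you yourself note, the hypotheses as written do not bound $\sup_n\Haus^1(K_n)$, so $\Haus^1(K)<\infty$ (needed for $K\in\mathcal K^f(\overline\Omega)$ when $U\not\supset\overline\Omega$) is not actually forced by the statement; this is a small imprecision in the theorem as quoted, and you handle it appropriately by pointing to the applications. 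Second, the last step needs more care than the phrase ``projecting $K_n$ onto $\ell_x$, its image contains an interval of length $\approx 2r$'' suggests: the projection of the whole continuum $K_n$ onto $\ell_x$ does cover a long interval, but this does not by itself localize the length to $K_n\cap\overline{B(x,r)}$ (a priori $K_n$ could leave the ball and re-enter contributing to the projection from outside). One needs to use Hausdorff proximity more quantitatively --- that inside $\overline{B(x,r)}$ both $K$ and hence $K_n$ are confined to a thin cone around $\ell_x$ for a.e.\ tangent point $x$ --- together with a coarea or Steiner-tree type argument on the connected pieces of $K_n\cap\overline{B(x,r)}$ reaching the two ``ends'' near $x\pm re$. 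This is precisely the technical heart of Gołąb's theorem and is where the sharp constant is earned; your outline is correct, but that sentence elides the part that actually requires work.
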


The following semicontinuity result on $K^f(\Omega)$ can be found for instance in \cite{Giacomini:2002vq}.

\begin{thm}\label{thm4.1Giacomini}
Let $\varphi:\overline{\Omega}\to \R$ be a continuous function such that there exist two constants
$0\leq c_1\leq c_2$ for which
$c_1\leq \varphi(x)\leq c_2$ for every $x\in \overline{\Omega}$.
The functional
$$
K\in \mathcal{K}^f(\overline{\Omega})\to \int_{K\cap U} \varphi(x)d\mathcal{H}^1
$$
is lower semicontinuous if $\mathcal{K}^f(\overline{\Omega})$ is endowed with the Hausdorff metric.
\end{thm}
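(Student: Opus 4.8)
The plan is to reduce this weighted statement to the unweighted Gol\c ab theorem (Theorem \ref{thmGolab}) by means of a layer--cake (coarea--type) decomposition of $\varphi$, and then to pass to the limit by Fatou's lemma in the layer variable.

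First I would fix a sequence $\{K_n\}\subset\mathcal{K}^f(\overline{\Omega})$ with $K_n\to K$ in the Hausdorff metric. Theorem \ref{thmGolab} already guarantees $K\in\mathcal{K}^f(\overline{\Omega})$, so $\mathcal{H}^1(K)<\infty$, and of course $\mathcal{H}^1(K_n)<\infty$ for each $n$. Since $\varphi\geq c_1\geq 0$, the layer--cake identity $\varphi(x)=\int_0^{+\infty}\chi_{\{\varphi>t\}}(x)\,dt$ holds for all $x\in\overline{\Omega}$, while $\varphi\leq c_2$ gives $\{\varphi>t\}=\emptyset$ for $t\geq c_2$. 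Continuity of $\varphi$ makes $\{\varphi>t\}$ relatively open in $\overline{\Omega}$, so I can pick an open set $V_t\subset\R^2$ with $V_t\cap\overline{\Omega}=\{\varphi>t\}$; since every $L\in\mathcal{K}(\overline{\Omega})$ lies in $\overline{\Omega}$, one has $L\cap U\cap\{\varphi>t\}=L\cap(U\cap V_t)$. Tonelli's theorem, applied to the finite Borel measure $A\mapsto\mathcal{H}^1(L\cap U\cap A)$ and the bounded nonnegative integrand, then yields the representation
\[
\int_{L\cap U}\varphi\,d\mathcal{H}^1=\int_0^{c_2}\mathcal{H}^1\big(L\cap(U\cap V_t)\big)\,dt\qquad\text{for every }L\in\mathcal{K}^f(\overline{\Omega}).
\]

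Next, for each fixed $t\in(0,c_2)$ the set $U\cap V_t$ is open in $\R^2$, so I would apply Theorem \ref{thmGolab} with $U\cap V_t$ in place of $U$ to get $\mathcal{H}^1\big(K\cap(U\cap V_t)\big)\leq\liminf_{n\to\infty}\mathcal{H}^1\big(K_n\cap(U\cap V_t)\big)$. Inserting this into the representation above and invoking Fatou's lemma for the nonnegative functions $t\mapsto\mathcal{H}^1\big(K_n\cap(U\cap V_t)\big)$ on $(0,c_2)$ gives
\[
\begin{aligned}
\int_{K\cap U}\varphi\,d\mathcal{H}^1&=\int_0^{c_2}\mathcal{H}^1\big(K\cap(U\cap V_t)\big)\,dt\leq\int_0^{c_2}\liminf_{n\to\infty}\mathcal{H}^1\big(K_n\cap(U\cap V_t)\big)\,dt\\
&\leq\liminf_{n\to\infty}\int_0^{c_2}\mathcal{H}^1\big(K_n\cap(U\cap V_t)\big)\,dt=\liminf_{n\to\infty}\int_{K_n\cap U}\varphi\,d\mathcal{H}^1,
\end{aligned}
\]
which is the claimed lower semicontinuity.

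The only genuinely substantial input is the unweighted Gol\c ab theorem itself, which I am allowed to take for granted (Theorem \ref{thmGolab}); granting it, the argument is a routine measure--theoretic reduction and I do not expect a real obstacle. The points needing a little care are purely bookkeeping: realizing each superlevel set $\{\varphi>t\}$ as the trace on $\overline{\Omega}$ of a genuine open subset of $\R^2$, so that Theorem \ref{thmGolab} applies verbatim, and checking that Tonelli's theorem and Fatou's lemma may be used, which is ensured by $\mathcal{H}^1(K)<\infty$, $\mathcal{H}^1(K_n)<\infty$ and $0\leq\varphi\leq c_2$. (Alternatively one could approximate $\varphi$ from below by piecewise constant functions adapted to a fine partition of $\overline{\Omega}$ and argue on each piece, but the layer--cake route is cleaner.)
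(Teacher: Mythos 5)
Your argument is correct. The paper does not prove this statement — it is recorded as a quotation from Giacomini's paper — so there is no in-source proof to compare with, but your layer–cake reduction to the unweighted Gol\c ab theorem (Theorem \ref{thmGolab}) is a clean, self-contained derivation, and all the small technical points are handled properly: the nonnegativity and boundedness of $\varphi$ give the layer–cake identity with $t$ ranging over the bounded interval $(0,c_2)$; continuity of $\varphi$ on $\overline\Omega$ lets you realize $\{\varphi>t\}$ as the trace of an open set $V_t\subset\R^2$, so that $L\cap U\cap\{\varphi>t\}=L\cap(U\cap V_t)$ whenever $L\subset\overline\Omega$, and Theorem \ref{thmGolab} applies verbatim with the open set $U\cap V_t$; the map $t\mapsto\mathcal H^1(L\cap U\cap V_t)$ is monotone, hence Borel, and Tonelli applies because $\mathcal H^1\lfloor L$ is a finite measure on $\overline\Omega$ when $L\in\mathcal K^f(\overline\Omega)$; finally Fatou in the layer variable is justified by nonnegativity. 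One can also remark that your proof only uses $0\le\varphi\le c_2$, so the lower bound $c_1\le\varphi$ in the statement is not actually needed for this direction. Giacomini's original derivation proceeds by a more intrinsic localization/approximation argument rather than by explicitly disintegrating $\varphi$, but given that the paper already states Gol\c ab's theorem with an arbitrary open set $U$, your route is the shortest one available here.
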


The following property of connected sets with finite length will be useful in the sequel (cfr. \cite[Proposition 2.5]{DalMaso:2014fp}).

\begin{prop}\label{connexion_by_arcs}
A connected set $C\subset \R^2$ with finite $\mathcal{H}^1$ measure is arcwise connected  and $\mathcal{H}^1(C)=\mathcal{H}^1(\overline{C})$.
\end{prop}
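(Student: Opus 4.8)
The statement contains two assertions — that $C$ is arcwise connected and that $\Haus^1(C)=\Haus^1(\overline C)$ — and I would derive both from one quantitative ingredient, a lower density bound for $\Haus^1$ on $\overline C$, together with the classical structure of continua of finite length. First, if $a$ and $b$ lie in a connected set $E\subset\R^2$, the orthogonal projection of $E$ onto the line through $a$ and $b$ is connected and contains $a$ and $b$, hence contains the whole segment $[a,b]$; since that projection is $1$-Lipschitz, $\Haus^1(E)\ge|a-b|$. Applied to $C$, this gives $\operatorname{diam}(C)\le\Haus^1(C)<\infty$, so $\overline C$ is compact. I would then prove the bound
\[
\Haus^1\bigl(C\cap B(x,r)\bigr)\ \ge\ r\qquad\text{for every }x\in\overline C\text{ and every }0<r<\tfrac12\operatorname{diam}(C).
\]
Indeed, fix such $x$ and $r$ and choose $y\in C$ with $|x-y|>r$ (possible, since $\operatorname{diam}(C)>2r$). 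For each $t\in(0,r)$ the relatively open subsets $C\cap B(x,t)$ and $C\setminus\overline{B(x,t)}$ of $C$ are nonempty — the first because $x\in\overline C$, the second because $y\in C$ and $|x-y|>t$ — so, $C$ being connected, they cannot cover $C$, whence $C\cap\partial B(x,t)\neq\emptyset$. Applying the coarea (Eilenberg) inequality to the $1$-Lipschitz map $z\mapsto|z-x|$ on $C\cap B(x,r)$ then gives
\[
\Haus^1\bigl(C\cap B(x,r)\bigr)\ \ge\ \int_0^r\Haus^0\bigl(C\cap\partial B(x,t)\bigr)\,dt\ \ge\ r .
\]

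From this I would obtain $\Haus^1(C)=\Haus^1(\overline C)$. The bound shows that the finite Radon measure $\mu:=\Haus^1(\,\cdot\,\cap C)$ has lower $1$-density at least $\tfrac12$ at every point of $\overline C$; a standard covering argument then yields $\Haus^1(\overline C)\le 2\,\mu(\R^2)=2\,\Haus^1(C)<\infty$, so $\overline C$ is a continuum of finite length. On the other hand, the lower $1$-density of a set of finite $\Haus^1$-measure vanishes at $\Haus^1$-almost every point of its complement, by the classical density theorem for Hausdorff measures (see, e.g., \cite{AmbFusPal00-000}; if one is worried about the measurability of $C$, apply this to a Borel hull of $C$ of equal measure). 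Since every point of $\overline C\setminus C$ lies in $\R^2\setminus C$ and yet has lower $\mu$-density at least $\tfrac12$, this forces $\Haus^1(\overline C\setminus C)=0$, and therefore $\Haus^1(\overline C)\le\Haus^1(C)+\Haus^1(\overline C\setminus C)=\Haus^1(C)\le\Haus^1(\overline C)$.

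It remains to prove arcwise connectedness, which I would first establish for the compact continuum $\overline C$ and then transfer to $C$. It is classical that a continuum of finite length is the image of a Lipschitz curve $f\colon[0,1]\to\overline C$ whose length does not exceed $2\,\Haus^1(\overline C)$ (see, e.g., \cite{Fed69-000}); in particular $\overline C$ is a Peano continuum, and for any $x,y\in\overline C$ the curve $f$ provides a Lipschitz path in $\overline C$ from $x$ to $y$, whose image — being itself a Peano continuum — contains an arc from $x$ to $y$, necessarily rectifiable since it lies in a set of finite $\Haus^1$-measure. To transfer this to $C$, one uses that $C$ is dense in $\overline C$: for every $S\subset\overline C\setminus C$, the set $\overline C\setminus S$ is still connected, since a nontrivial splitting $\overline C\setminus S=A\sqcup B$ into relatively open sets would confine the connected set $C$ to one of them, say $A$, leaving the nonempty relatively open set $B$ disjoint from the dense set $C$ — impossible. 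Combining this with the total disconnectedness of the $\Haus^1$-negligible remainder $\overline C\setminus C$, one shows that any two points of $C$ can still be joined by an arc avoiding $\overline C\setminus C$; alternatively, one simply invokes \cite[Proposition 2.5]{DalMaso:2014fp}.

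The density estimate and the measure equality are routine. The genuinely delicate point is the last one — producing the joining arc \emph{inside} $C$ rather than merely inside $\overline C$. That a naive approach fails is already visible on the ``$\theta$-shape'' (a circle together with one of its diameters): if $C$ is obtained by deleting the midpoint $p$ of the diameter, then $C$ is connected and still arcwise connected, yet the \emph{shortest} arc of $\overline C$ between the two points where the diameter meets the circle runs through $p\notin C$. Hence one cannot simply minimise length in $\overline C$; one must instead exploit the finer structure of $C$ — the density lower bound above and the total disconnectedness of $\overline C\setminus C$ — to route the arc through $C$.
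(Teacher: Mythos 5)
The paper does not prove this proposition --- it quotes it verbatim from \cite[Proposition 2.5]{DalMaso:2014fp} --- so there is no in-paper argument to compare with. On its own merits, your proposal correctly establishes the density lower bound $\Haus^1(C\cap B(x,r))\geq r$ via the Eilenberg/coarea inequality, the finiteness $\Haus^1(\overline C)\leq 2\,\Haus^1(C)$ via a Vitali covering, and --- using a Borel hull of $C$ together with the classical upper-density theorem --- the equality $\Haus^1(C)=\Haus^1(\overline C)$. That part of the argument is sound and essentially self-contained, and you also correctly deduce that $\overline C$ is a Peano continuum and hence arcwise connected.

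The gap is in the last step, and it is exactly the one you flag yourself: producing an arc inside $C$, not merely inside $\overline C$. The two facts you assemble --- that no subset of $\overline C\setminus C$ separates $\overline C$, and that $\overline C\setminus C$ is totally disconnected --- are correct, but you never explain how they combine to yield an arc in $C$ between two given points, and it is not clear that these two topological facts alone suffice without invoking the finite-length hypothesis further. Your $\theta$-shape example rightly shows that the natural candidate (a length-minimising arc in $\overline C$) may pass through $\overline C\setminus C$, so some genuine rerouting construction is needed; the sentence ``one shows that any two points of $C$ can still be joined by an arc avoiding $\overline C\setminus C$'' asserts the conclusion rather than proving it. Falling back on \cite[Proposition 2.5]{DalMaso:2014fp} closes the gap only by citation, which is what the paper does anyway. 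So: the measure-equality half is a genuine proof; the arcwise-connectedness half is, as you acknowledge, incomplete.
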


We conclude this section recalling that any compact arcwise connected set $E$ with finite $\mathcal{H}^1$ measure consists of a countable union of rectifiable curves, together with a set of $\mathcal{H}^1$ measure zero (see for example \cite{Falconer:1986ua}). By {\it rectifiable curve} we mean the image of a continuous injection $\psi: [a,b]\to \R^2$ with finite $\mathcal{H}^1$ measure.

\subsection{Functions of Bounded Variation}	\label{subsec:SBV}

We summarize here few basic results on the theory of {\it functions of bounded variation} that will be needed in the sequel. For a complete description of the theory one can refer for instance to \cite{AmbFusPal00-000,Fed69-000,Evans:2015wg} and the references therein. %We will restrict the presentation to the case where $\Omega$ is a subset of $\R^2$. 
All the results generalize naturally to maps with values in $\R^m$, $m\geq 1$.
 
 Let $\Omega\subset \R^n$ be an open bounded domain. Given $u\in L^1(\Omega)$, we will use the notation $\mathcal{S}_{u}$ to denote the {\it approximate discontinuity set} of $u$, i.e. the set of points where $u$ does not have an approximate limit and we denote by $\mathcal{J}_{u}$ the set of {\it approximate jump points} of $u$. A function $u\in L^1(\Omega)$ is said to be of {\it bounded variation} if its distributional derivative can be represented by a finite Radon measure in $\Omega$, i.e. if there exists a vector valued Radon measure $Du=(D_1u,\dots,D_n u)$ such that
 \[
 \int_\Omega u \frac{\partial \phi}{\partial x_i} \, dx = -\int_\Omega \phi \, dD_i u \;\; ,\;\;\; \; \forall \, \phi\in C_c^{\infty}(\Omega).
 \] 
 The linear space of the functions of bounded variation is commonly denoted by $BV(\Omega)$ and can be endowed with the usual norm
\[
\norm{u}_{BV(\Omega)}=\norm{u}_{L^1(\Omega)}+|Du|(\Omega)\,
\]
that makes it a Banach space. 
Let $u_n, u \in BV(\Omega)$. We say that $\{u_n\}_n$ {\it {weakly* converges}} to $u$ in $BV(\Omega)$ if $u_n \to u$ in $L^1(\Omega)$ and the measures $Du_n$ weakly* converge to the measure $Du$ in $\mathcal{M}(\Omega, \R^n)$, that is,
\[
\lim\limits_{n \to \infty}\int_{\Omega}\varphi\, dD u_n=
\int_{\Omega}\varphi \,dD u \; ,\qquad \forall\,\varphi \in C_0(\Omega)\,.
\]
The following result is often useful.
\begin{prop}
\label{prop:crit-compact}
	Let $\{u_n\}_n \subset BV(\Omega)$. Then $u_n$ weakly* converges to $u$ in $BV(\Omega)$ if and only if $\{u_n\}_n$ is bounded in $BV(\Omega)$ and converges to $u$ in $L^1(\Omega)$. Moreover for any non-negative continuous function $f: \om \to [0,+\infty[\,$ we have the following semicontinuity property:
	\[
	\int_\om f(x) d|D_j u|(x) \leq \liminf_{n \to \infty}\int_\om f(x) d|D_j u_{n}|(x)\,.
	\]
\end{prop}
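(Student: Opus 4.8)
The plan is to split the statement into its two halves and treat each with the standard tools. For the equivalence, one direction is immediate: if $u_n$ weakly* converges to $u$, then by definition $u_n\to u$ in $L^1(\Omega)$, and boundedness in $BV(\Omega)$ follows from the uniform boundedness principle applied to the measures $Du_n$ (a weakly* convergent sequence of Radon measures is bounded in total variation). For the converse, assume $\{u_n\}_n$ is bounded in $BV(\Omega)$ and $u_n\to u$ in $L^1(\Omega)$. First I would check that $u\in BV(\Omega)$: passing to the limit in the definition of distributional derivative, $\int_\Omega u\,\partial\phi/\partial x_i\,dx=\lim_n\int_\Omega u_n\,\partial\phi/\partial x_i\,dx=-\lim_n\int_\Omega\phi\,dD_iu_n$ for every $\phi\in C_c^\infty(\Omega)$, and since $|D_iu_n|(\Omega)\le C$ this identifies $D_iu$ as a bounded linear functional on $C_c^\infty(\Omega)$, hence (by Riesz) a finite Radon measure; thus $u\in BV(\Omega)$ with $|Du|(\Omega)\le\liminf_n|Du_n|(\Omega)$. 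Then weak* convergence $Du_n\weakstar Du$ in $\mathcal M(\Omega,\R^n)$ is obtained by a density argument: the identity $\int\varphi\,dDu_n\to\int\varphi\,dDu$ holds for $\varphi\in C_c^\infty(\Omega)$ by the computation just made (integrating by parts and using $u_n\to u$ in $L^1$), and it extends to all $\varphi\in C_0(\Omega)$ by the uniform total variation bound $\sup_n|Du_n|(\Omega)\le C$ together with an $\eps/3$ approximation of $\varphi$ by smooth compactly supported functions in the sup norm.

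For the semicontinuity statement, I would argue by the dual (Reshetnyak-type) formulation of total variation with a weight. For a fixed component $j$ and a non-negative continuous $f:\Omega\to[0,+\infty[$, write
\[
\int_\Omega f(x)\,d|D_ju|(x)=\sup\left\{\int_\Omega D_ju\cdot\phi\,:\,\phi\in C_c(\Omega),\ |\phi(x)|\le f(x)\ \forall x\right\},
\]
interpreting $\int D_ju\cdot\phi$ via integration by parts against the $L^1$ function (or, in general, testing the vector measure $D_ju$ against continuous compactly supported test fields dominated by $f$). For each such admissible $\phi$, the quantity $\int_\Omega D_ju_n\cdot\phi$ converges to $\int_\Omega D_ju\cdot\phi$ by the weak* convergence $Du_n\weakstar Du$ just established (note $\phi$ has compact support, so no boundary issue), and $\int_\Omega D_ju_n\cdot\phi\le\int_\Omega f\,d|D_ju_n|$ for every $n$. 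Taking $\liminf_n$ first and then the supremum over admissible $\phi$ yields $\int_\Omega f\,d|D_ju|\le\liminf_n\int_\Omega f\,d|D_ju_n|$.

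The only mildly delicate point is the extension of weak* convergence from $C_c^\infty(\Omega)$ to $C_0(\Omega)$, which is where the $BV$-boundedness hypothesis is genuinely used; everything else is bookkeeping. One should be slightly careful that $C_0(\Omega)$ here means continuous functions vanishing at the boundary (so that compactly supported smooth functions are dense in it in the sup norm), and that the weight $f$ in the semicontinuity part need not vanish at the boundary nor be bounded — but this causes no trouble since the admissible test fields $\phi$ in the dual formula are required to have compact support, and any such $\phi$ is automatically dominated by a constant on its support. I expect no real obstacle; this is a standard compactness-and-lower-semicontinuity package for $BV$, included here for the reader's convenience.
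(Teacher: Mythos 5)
The paper does not actually prove Proposition \ref{prop:crit-compact}; it is stated in the preliminaries as a standard fact from $BV$ theory (the equivalence is essentially \cite[Proposition 3.13]{AmbFusPal00-000} and the weighted semicontinuity is a routine consequence of weak$*$ convergence of the derivative measures). Your proof is correct and is precisely the standard argument one would give: uniform boundedness via Banach--Steinhaus for the easy implication, Riesz representation plus an $\eps/3$--density argument from $C_c^\infty(\Omega)$ to $C_0(\Omega)$ for the converse, and the duality formula $\int_\Omega f\,d|D_ju| = \sup\{\int_\Omega \phi\cdot dD_ju : \phi\in C_c(\Omega;\R^m),\ |\phi|\le f\}$ to deduce the weighted lower semicontinuity from the weak$*$ convergence $D_ju_n \weakstar D_ju$. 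The one place worth phrasing carefully in a written-out version is the dual formula itself: it holds for any non-negative continuous $f$ (bounded or not) via the polar decomposition $D_ju = \theta\,|D_ju|$ and truncation, and your observation that the admissible test fields are automatically bounded on their compact support is exactly the point that makes the argument go through.
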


We recall that for a given function $u\in BV(\Omega)$, its distributional derivative can be decomposed as $Du=D^a +D^j u+D^c u$ where $D^a u$ is the absolutely continuous part with respect to the Lebesgue measure $\mathcal L^2$ and $D^j u$ and $ D^c u$ are the jump part and the Cantor part respectively (cfr. \cite[Section 3.9]{AmbFusPal00-000}). We also denote $D^s u:= D^j u+D^c u$ the singular part and $\tilde Du:= D^a u + D^c u$ the diffuse part of $Du$. 
 
A function $u\in BV(\Omega)$ is said to be a {\it {special function of bounded variation}} and we write $u\in SBV(\Omega)$ if the Cantor part of its derivative, $D^c u$, is zero. Then the distributional derivative of a function $u\in SBV(\Omega)$ has a special structure, i.e. it is the sum of an absolutely continuous part with respect to $\mathcal L^n$ and a $(n-1)-$rectifiable measure. The space $SBV(\Omega)$ is a closed subspace of $BV(\Omega)$.

\subsection{Caccioppoli Partitions and Piecewise constant maps}  \label{subsec:piecewise}

We summarize here the definition and the main properties of partitions of a domain $\Omega$ in sets of finite perimeter, often called Caccioppoli partitions, and of the piecewise constant functions, i.e. functions that are constant in each set of a Caccioppoli partition. These concepts have been introduced and studied for instance in \cite{Congedo:1991to,Congedo:1993dv} and \cite[Section 4.4]{AmbFusPal00-000}. 

\begin{defn} \label{ReducedBoundary} Let $E$ be a Lebesgue measurable subset of $\R^n$ and $\mathcal O$ the largest open set such that $E$ is locally of finite perimeter in $\mathcal O$. 
The {\it reduced boundary}  $\partial^r E$ is the collection of all points $x\in \operatorname{supp}|D\chi_E|\cap \mathcal O$ such that there exists in $\R^n$ the limit 
\[
\nu_E(x) := \lim_{\rho \downarrow 0} \frac{D\chi_E(B(x,\rho))}{|D\chi_E|(B(x,\rho))}
\] 
and satisfies $| \nu_E(x)|=1$. The function $\nu_E \,:\, \partial^rE \to \boldmath S^{n-1}$ is called the {\it generalized inner normal} to $E$.
\end{defn}

The {\it upper and lower densities}
of a Borel set $E\subset \R^n$  at $x$ are defined respectively by  
\[
\theta^*(E,x):= \limsup_{\rho\downarrow 0}\frac{\mathcal{L}^n(E\cap I_\rho(x))}{\omega_n \rho^n}\;\;, \;\;\;\;\theta_*(E,x):= \liminf_{\rho\downarrow 0}\frac{\mathcal{L}^n(E\cap I_\rho(x))}{\omega_n \rho^n}\,.
\]
If they agree, their common value $\theta(E,x)$ defines the {\it density} of $E$ in $x$.
{ For every $t\in [0,1]$ and every $\mathcal L^{n}$-measurable set $E\subset \R^n$ we denote by $E^t$ the set of all points where $E$ has density $t$. We use the notation $\partial^*E$ to denote the {\it essential boundary} of $E$, i.e. the set $\R^n\setminus (E^0\cup E^1)$ of points where the density is neither $0$ nor $1$. }

The structure theorem of De Giorgi (cfr. for instance Theorem 3.59 in \cite{AmbFusPal00-000}) ensures that for a measurable set $E\subset \R^n$, the reduced boundary $\partial^r E$ is countably $(n-1)$-rectifiable, $|D\chi_E|= \Haus^{n-1}\lfloor_{\partial^r E}$ and for any $x_0\in \partial^r E$ the sets $(E-x_0)/\rho$ locally converge in measure in $\R^n$ as $\rho \downarrow 0$ to the half space $H$ orthogonal to $\nu_E(x_0)$ containing $\nu_E(x_0)$. Moreover by a result of Federer (cfr. Theorem 3.61 in \cite{AmbFusPal00-000}) 
we can say that, if $E$ is a set of finite perimeter, then every point $x_0\in \partial^r E$ has density $1/2$ with respect to $E$  and $\Haus^{n-1}$-a.e. point of the essential boundary of $E$ belongs to the reduced boundary of $E$.

We can now introduce the concept of Caccioppoli partition of a set $\Omega$.

\begin{defn}
Let $\Omega\subset \R^n$ be an open set and let $\mathcal E$ be a finite or countable family of measurable sets of $\mathbb{R}^{n}$. $\mathcal E$ is said to be a  {\it Caccioppoli partition} of $\Omega$, if and only if there exists a sequence $E_{i\in \N}$ such that
\[
\mathcal E = \{ E_{i} \;:\; i \in \N\} \;,\;\; \left| \Omega \setminus \bigcup_{i=1}^{\infty} E_{i} \right| =0 \;,\;\; E_{i}=E_{i}^{1} \; \;\forall i\in \N\;, 
\]
\[
E_{i}\cap E_{j} =\emptyset \;\; \forall i\not= j\;,\;\;\sum_{i=1}^{\infty} P(E_{i},\Omega) < \infty.
\]
\end{defn}

The Federer's result on the reduced boundary of sets of finite perimeter recalled above allows us to describe the local structure of Caccioppoli partitions. Indeed, up to a $\Haus^{n-1}$-negligible set, any point of $\Omega$ either belongs to one and only one set $(E_i)^1$ or belongs to the intersection of two and only two boundaries $\partial^r E_i \cup \partial^r E_j$. The previous sentence is made precise by the following theorem.

\begin{thm}
	\label{thm:caccio-structure}
	Let $\{E_i\}_{i\in I}$ be a Caccioppoli partition of $\Omega$. Then $\Haus^{n-1}$-a.e. point of $\Omega$ is contained in 
	\[
	\bigcup_{i\in I} (E_i)^1 \, \cup \bigcup_{i,j\in I, \, i\not=j}(\partial^r E_i \cap \partial^r E_j).
	\]
\end{thm}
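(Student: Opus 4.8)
The plan is to reduce the statement to the two structural results already quoted: De Giorgi's structure theorem (which identifies $|D\chi_{E_i}|$ with $\Haus^{n-1}\lfloor_{\partial^r E_i}$ and guarantees countable $(n-1)$-rectifiability of $\partial^r E_i$) and Federer's theorem (which says $\Haus^{n-1}$-a.e. point either has density $0$ or $1$ for $E_i$, i.e. $\Haus^{n-1}(\partial^* E_i\setminus \partial^r E_i)=0$). First I would fix the exceptional null set: let $N_0$ be the union over $i\in I$ of $\partial^* E_i\setminus\partial^r E_i$; since $I$ is countable and each of these sets is $\Haus^{n-1}$-null by Federer, $\Haus^{n-1}(N_0)=0$. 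Throughout the rest of the argument I work with $x\in\Omega\setminus N_0$.

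Next I would analyze a single index $i$. Because $E_i=E_i^1$, for $\Haus^{n-1}$-a.e. $x\in\Omega$ we have $x\in E_i^1$, $x\in E_i^0$, or $x\in\partial^* E_i$; and on $\Omega\setminus N_0$ the third alternative forces $x\in\partial^r E_i$, at which point the density of $E_i$ at $x$ equals $1/2$ by Federer. So, discarding a further $\Haus^{n-1}$-null set if necessary (where densities fail to exist), every point of $\Omega\setminus N_0$ is, for each fixed $i$, either of density $1$, density $0$, or density $1/2$ (the last being the reduced-boundary case) for $E_i$. The key quantitative input is that the densities sum correctly: since $\sum_i \mathcal L^n(E_i\cap I_\rho(x)) = \mathcal L^n(I_\rho(x))$ for every $\rho$ (the $E_i$ are disjoint and cover $\Omega$ up to a null set, and $I_\rho(x)\subset\Omega$ for small $\rho$), dividing by $\omega_n\rho^n$ and passing to the limit (using Fatou on one side and the finiteness on the other, or simply noting that at most finitely many terms can have positive upper density because each positive density is at least $1/2$) gives $\sum_i \theta(E_i,x)=1$ at every $x\in\Omega\setminus N_0$ where all the relevant densities exist.

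From $\sum_i\theta(E_i,x)=1$ with each $\theta(E_i,x)\in\{0,\tfrac12,1\}$, the combinatorics are immediate: either exactly one index $i$ has $\theta(E_i,x)=1$ and all others $0$ — then $x\in (E_i)^1$ — or exactly two indices $i\ne j$ have $\theta=\tfrac12$ and all others $0$ — then $x\in\partial^r E_i\cap\partial^r E_j$, using that density $1/2$ off $N_0$ means reduced-boundary point. No other combination of values from $\{0,\tfrac12,1\}$ sums to $1$. This places every $x\in\Omega\setminus N_0$ in the claimed union, and since $\Haus^{n-1}(N_0)=0$ this proves the theorem.

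The only delicate point — and the step I would write most carefully — is the interchange of the countable sum with the limit in $\rho$ when deriving $\sum_i\theta(E_i,x)=1$; this is not literally Fatou in the direction one wants, but it is harmless here because at a fixed $x$ at most two indices can contribute a positive density (every positive value is $\ge \tfrac12$ and they sum to at most $1$), so the effective sum is finite and the exchange is trivial. One should state this finiteness observation first and then the density identity follows. Everything else is bookkeeping on the countably many $\Haus^{n-1}$-null sets supplied by De Giorgi's and Federer's theorems.
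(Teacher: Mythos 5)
Your overall plan (Federer plus De Giorgi to force each density into $\{0,\tfrac12,1\}$, then the combinatorics of which values can sum to $1$) is reasonable, and you correctly flag the one delicate step: justifying
\[
\sum_{i}\theta(E_i,x)=1 \quad\text{for $\Haus^{n-1}$-a.e. }x.
\]
But the justification you offer for that step does not close the gap. Fatou (applied to the counting measure on $I$) gives only
\[
\sum_i\theta(E_i,x)\;\leq\;\liminf_{\rho\downarrow 0}\sum_i \frac{\mathcal{L}^n(E_i\cap I_\rho(x))}{\omega_n\rho^n}\;=\;1,
\]
and this one-sided inequality is exactly what tells you that at most two indices can carry positive density. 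It does \emph{not} give equality. Knowing that the limiting densities $\theta(E_i,x)$ vanish for all but two indices does not make the interchange ``trivial'': the tail
\[
\sum_{i\neq i_1,i_2}\frac{\mathcal{L}^n(E_i\cap I_\rho(x))}{\omega_n\rho^n}
\]
is a sum of infinitely many nonnegative terms, each tending to $0$ as $\rho\downarrow 0$, and such a sum need not tend to $0$. Concretely, nothing in your argument rules out a point $x\in\partial^r E_{i_0}$ with $\theta(E_{i_0},x)=\tfrac12$ while $\theta(E_j,x)=0$ for every $j\neq i_0$, the missing half of the mass leaking out to infinitely many $E_j$ simultaneously; at such a point $\sum_i\theta(E_i,x)=\tfrac12$, not $1$, and $x$ lies in neither $\bigcup_i (E_i)^1$ nor any $\partial^r E_i\cap\partial^r E_j$.

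The structural hypothesis $\sum_i P(E_i,\Omega)<\infty$ is precisely what excludes a $\Haus^{n-1}$-substantial set of such points, and your proposal never invokes it for this step (De Giorgi and Federer are applied to each $E_i$ separately and do not see the summability of the perimeters). A correct argument must bring it in, e.g.\ by considering the finite measure $\mu:=\sum_i|D\chi_{E_i}|$ and, for $\Haus^{n-1}$-a.e.\ $x$ with $\theta^{*\,n-1}(\mu,x)=0$, combining the relative isoperimetric inequality in $B_\rho(x)$ with the identity $\sum_i\mathcal{L}^n(E_i\cap B_\rho(x))=\omega_n\rho^n$ to produce a single index $i_0$ with $\mathcal{L}^n(B_\rho(x)\setminus E_{i_0})=o(\rho^n)$, i.e.\ $x\in(E_{i_0})^1$; then a separate (blow-up or rectifiability) argument on the residual set $\bigcup_i\partial^r E_i$. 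Alternatively one can pass through the piecewise constant $BV$ map $v=\sum_i t_i\chi_{E_i}$ and invoke $\Haus^{n-1}(S_v\setminus J_v)=0$ (Federer--Vol'pert), which again uses the finite total variation of $v$, i.e.\ the Caccioppoli hypothesis. This is the route taken in Ambrosio--Fusco--Pallara, Theorem 4.17, which the paper cites without reproducing the proof.
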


\begin{defn}\label{defn_pw_constant}
Let $v: \Omega\to \R^m$. We say that $v$ is piecewise constant if there exists a Caccioppoli partition of $\Omega$ and $t_i \in \R^m$ such that
\begin{equation}
\label{eq-piecewise-def}
v=\sum_i t_i\chi_{E_i}\,.
\end{equation}
\end{defn}

We recall that if $v$ is a bounded piecewise constant function on $\Omega$ represented by \eqref{eq-piecewise-def} the approximate discontinuity set, $\mathcal S_{v}$, and the approximate jump set $\mathcal J_{v}$, can be described in terms of the sets $E_{i}$. Indeed  $\Omega\setminus \mathcal S_{v}$ coincides, up to a $\Haus ^{n-1}$-negligible set, with
\[
\bigcup_{i}\big(\Omega \cap (E_{i})^{1}\big ) \cup \bigcup_{i\not= j , t_{i}=t_{j}} \big(\Omega \cap \partial^r E_i \cap \partial^r E_j \big).
\]
$J_{v}$ contains 
\[
\bigcup_{i, j : t_{i}\not= t_{j}} \big(\Omega \cap \partial^r E_i \cap \partial^r E_j\big)
\] 
and it is contained in this set up to a $\Haus ^{n-1}$-negligible set. 

Bounded piecewise constant functions can also be characterized by properties of their distributional derivatives. Indeed, if $v \in [L^{\infty}]^{m}$, them $v$ is equivalent to a piecewise constant function if and only if $v\in [BV_{loc}(\Omega)]^{m}$, $Dv$ is concentrated on $S_{v}$ and $\Haus^{n-1}(S_{u}) < \infty$. Moreover, denoting by $E_{i}$ the level sets of $v$ we have
\[
2 \Haus^{n-1}(S_{u}) =  \sum_{i} P(E_{i},\Omega).
\]

In the sequel the following compactness result for piecewise constant functions will be useful (cfr. \cite[Theorem 4.8 and Theorem 4.25]{AmbFusPal00-000} ).
\begin{thm}\label{compattezzapw} 
Let $\Omega$ be an open bounded Lipschitz set. Let $\{v_n\} \subset SBV(\Omega)$  be a sequence of piecewise constant functions such that
$\norm{v_n}_\infty + \mathcal{H}^{N-1}(\mathcal{S}_{v_n})$ is bounded.
Then, up to a subsequence, $v_n$ converges  weakly*  in $BV(\Omega)$ and 
in measure to a piecewise constant function $v$.
\end{thm}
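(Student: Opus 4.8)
The plan is to obtain the claimed compactness as a consequence of the two quoted results from \cite{AmbFusPal00-000}: the general $BV$ compactness (Theorem 4.8 there) and the closure/structure theorem for piecewise constant functions (Theorem 4.25 there). First I would reduce to a uniform $BV$ bound. Since $v_n$ is piecewise constant and $\|v_n\|_\infty \le C$ uniformly, $v_n \in L^1(\Omega)$ with $\|v_n\|_{L^1(\Omega)} \le C \mathcal L^n(\Omega)$, which is finite because $\Omega$ is bounded. For the total variation, I would use the identity recalled just above the statement: if $\{E_i\}$ are the level sets of $v_n$, then $|Dv_n|(\Omega) = \sum_i |t_i|\, P(E_i,\Omega)$ up to the bookkeeping of which boundaries separate which values; in any case $|Dv_n|(\Omega) \le 2\|v_n\|_\infty\, \mathcal H^{N-1}(\mathcal S_{v_n})$, using $2\mathcal H^{N-1}(\mathcal S_{v_n}) = \sum_i P(E_i,\Omega)$. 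Hence $\|v_n\|_{BV(\Omega)}$ is bounded by the hypothesis $\|v_n\|_\infty + \mathcal H^{N-1}(\mathcal S_{v_n}) \le C$.

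Next I would invoke the compactness theorem in $BV$: since $\Omega$ is a bounded Lipschitz domain, the embedding $BV(\Omega) \hookrightarrow L^1(\Omega)$ is compact, so from the bounded sequence $\{v_n\}$ we can extract a subsequence (not relabelled) converging in $L^1(\Omega)$, and hence in measure, to some $v \in BV(\Omega)$; by Proposition \ref{prop:crit-compact} this convergence is weak* in $BV(\Omega)$. It remains to show that the limit $v$ is again piecewise constant. Here I would use the characterization quoted in the excerpt: a bounded function is (equivalent to) a piecewise constant function if and only if it lies in $BV_{loc}$, its derivative is concentrated on its approximate discontinuity set, and that set has finite $\mathcal H^{N-1}$ measure. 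The bound $\|v_n\|_\infty \le C$ passes to the limit, so $v \in L^\infty(\Omega)$ with $\|v\|_\infty \le C$. The fact that $Dv$ is concentrated on $\mathcal S_v$ (equivalently, that $v$ has no Cantor part and no absolutely continuous part) together with finiteness of $\mathcal H^{N-1}(\mathcal S_v)$ is exactly the content of the closure theorem for piecewise constant functions in \cite[Theorem 4.25]{AmbFusPal00-000}: a sequence of piecewise constant functions, bounded in $L^\infty$ and with $\mathcal H^{N-1}(\mathcal S_{v_n})$ bounded, which converges in $L^1$, has a piecewise constant limit, and moreover $\mathcal H^{N-1}(\mathcal S_v) \le \liminf_n \mathcal H^{N-1}(\mathcal S_{v_n})$.

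The only genuine subtlety — and the step I would treat with most care — is the lower semicontinuity of $\mathcal H^{N-1}(\mathcal S_{v_n})$, i.e. that mass cannot escape from the jump set in the limit; this is precisely why one needs the full strength of the $SBV$-closure machinery of Ambrosio rather than a soft argument, and it is what forces the limit to stay in the piecewise constant class (a priori $L^1$-limits of piecewise constant functions could develop a Cantor part). Since this is stated as Theorem 4.25 in \cite{AmbFusPal00-000}, I would simply quote it: apply it to the subsequence produced above to conclude that $v$ is piecewise constant, that $v_n \to v$ weakly* in $BV(\Omega)$ and in measure, which is the assertion. If one prefers a self-contained route, one can instead note that piecewise constant functions are $SBV$ with vanishing absolutely continuous gradient, apply Ambrosio's $SBV$ compactness and closure theorem to get an $SBV$ limit $v$ with $\nabla v = 0$ a.e. and $\mathcal H^{N-1}(\mathcal S_v)<\infty$, and then read off from $Dv = D^j v$ concentrated on $\mathcal S_v$ that $v$ is piecewise constant via the characterization recalled in the excerpt.
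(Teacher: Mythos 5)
The paper does not prove Theorem \ref{compattezzapw}; it is quoted verbatim from \cite[Theorem~4.8 and Theorem~4.25]{AmbFusPal00-000} and simply cited. Your first route, which invokes \cite[Theorem~4.25]{AmbFusPal00-000} to conclude, is therefore essentially a restatement of that citation and is in this sense circular as a ``proof'': Theorem~4.25 of \cite{AmbFusPal00-000} \emph{is} the statement (subsequence converging in measure to a piecewise constant limit, with lower semicontinuity of $\mathcal H^{N-1}(\mathcal S_{v_n})$), so applying it does not prove anything new, though it does correctly identify the source. Your preliminary $BV$-bound $|Dv_n|(\Omega) \leq 2\norm{v_n}_\infty \,\mathcal H^{N-1}(\mathcal S_{v_n})$ and the appeal to Rellich compactness are correct but redundant if one then quotes Theorem~4.25, which already delivers the $L^1$-convergent subsequence.

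Your second, ``self-contained'' route is the substantive one and is in fact how the result is actually obtained in \cite{AmbFusPal00-000}: piecewise constant functions are $SBV$ with $\nabla v_n = 0$ a.e., so Ambrosio's $SBV$ compactness and closure theorem (the other reference the paper gives, \cite[Theorem~4.8]{AmbFusPal00-000}) applies with the equi-integrability hypothesis on $\nabla v_n$ trivially satisfied. The limit $v$ then lies in $SBV(\Omega)\cap L^\infty(\Omega)$ with $\nabla v = 0$ (as the weak limit of the zero sequence), $D^c v = 0$, hence $Dv = D^j v$ is concentrated on $\mathcal S_v$, and $\mathcal H^{N-1}(\mathcal S_v) \leq \liminf_n \mathcal H^{N-1}(\mathcal S_{v_n}) < \infty$; the characterization of piecewise constant functions recalled just before the statement then identifies $v$ as piecewise constant. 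This is a correct proof. Since the paper offers no proof at all, your second route is strictly more informative than what the paper gives, and it matches the structure of the proof in the cited reference; I would present it as the proof and drop the first route, which adds nothing beyond the citation.
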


\subsection{The vectorial pyramid in a square}\label{subsec:DMP-construction}
In the sequel we will often refer to the explicit solution to (\ref{OPGG}) constructed in \cite{Dacorogna:2008ih} for the set $E$ given by the eight matrices
\begin{equation}\label{eq:8matrici}
\begin{array}{ll}
\pm A_1=\pm\matrice{1}{0}{0}{1}, & \pm A_2=\pm\matrice{0}{1}{1}{0}, \vspace{0.2cm}
 \\ 
\pm A_3=\pm\matrice{-1}{0}{0}{1},&\pm A_4=\pm\matrice{0}{-1}{1}{0}\,.
\end{array}
\end{equation}
We rapidly review its definition. We will refer to it as the {\it vectorial pyramid}  and we will denote it with $p_v(x,y)=(p_v^1(x,y),p_v^2(x,y))$. We set $\Omega\subset \R^2$ to be the square $Q_2(0,0):=(-2,2)\times (-2,2)$. Since the two components of $p_v$ are symmetric with respect to the axes and to the lines $y=\pm x$, it is sufficient to define it only in the triangle 
\[
T=\big\{(x,y)\in \R^2: 0\leq y\leq x\leq 2\big\}.
\] 
Let 
\[
a(x,y)=\min\{1\pm x, 1\pm y\}\,,\;\;\;\;\;
b(x,y)=\max\{1-|x|, 1-|y|\}
\]
\[
c(x,y)=\left\{
\begin{array}{ll}
1-|x|, & if \,\, |x|\leq y
\\
1-y, & if \,\,|y|\leq -x
\\
1-x, & if \,\,|x|\leq -y
\\
1-|y|, & if\,\, |y|\leq x
\end{array}
\right.\;,\,\,\,\,
d(x,y)=\left\{
\begin{array}{ll}
1-x, & if \,\,|x|\leq y
\\
1+y, & if \,\,|y|\leq -x
\\
1-|x|, & if \,\,|x|\leq -y
\\
1-|y|, & if \,\,|y|\leq x
\end{array}
\right.
\]
and consider the rescaled functions $a_k(x,y)=2^{-k} a(2^k x,2^k y)$ and similarly define the functions $b_k, c_k$ and $d_k$.
Let $x_0=0$, $x_k=2-\frac{1}{2^{k-1}}$, $y_0=0$ and $y^{i}_k=\frac{i}{2^{k-1}}$ for $i=0, 1, \dots 2^{k}-1$. Define $Q_{k,i}$ as the squares  $(x_{k-1},x_k)\times (y^{i}_{k},y^{i+1}_k)$ for $i=0, 1, \dots 2^{k}-2$ (cfr. Figure 1).

%%%%%%%%%%%%%%%%
\newcommand{\DMP}{
    \begin{tikzpicture}[x=5mm,y=5mm]
    \draw[->] (-5,-4.5)node[below right]{}--(4,-4.5)node[right]{$x$};\draw[->] (-4.5,-5)--(-4.5,4)node[left]{$y$};
        \draw(-4.5,-4.5)--(4,4);
        \draw(-0.5,-0.5)--(1.5,-0.5);
        \draw(-0.5,-4.5)--(-0.5,-0.5);
        \draw(-0.5,-2.5)--(1.5,-2.5);
        \draw(1.5,-4.5)--(1.5,1.5);
        \draw(2.5,-4.5)--(2.5,2.5);
        \draw(1.5,-1.5)--(2.5,-1.5);
        \draw(1.5,-2.5)--(2.5,-2.5);
\draw(1.5,-3.5)--(2.5,-3.5);
\draw(1.5,-0.5)--(2.5,-0.5);
\draw(1.5,0.5)--(2.5,0.5);
\draw(1.5,1.5)--(2.5,1.5);
\draw(2.5,1.5)--(3,1.5);
\draw(2.5,2)--(3,2);
\draw(2.5,1)--(3,1);
\draw(2.5,0.5)--(3,0.5);
\draw(2.5,0)--(3,0);
\draw(2.5,-0.5)--(3,-0.5);
\draw(2.5,-1)--(3,-1);
\draw(2.5,-1.5)--(3,-1.5);
\draw(2.5,-2)--(3,-2);
\draw(2.5,-2.5)--(3,-2.5);
\draw(2.5,-3)--(3,-3);
\draw(2.5,-3.5)--(3,-3.5);
\draw(2.5,-4)--(3,-4);
\draw(3,-4.5)--(3,3);
%\draw (4.35,1)node[left]{\footnotesize{$T$}};
\draw (-0.5,-3.5)node[left]{\tiny{$Q_{1,0}\cap T$}};
\draw (1.5,-3.5)node[left]{\tiny{$Q_{2,0}$}};
\draw (1.5,-1.5)node[left]{\tiny{$Q_{2,1}$}};
\draw (1.7,0.1)node[left]{\tiny{$Q_{2,2}\!\cap\! T$}};
\draw (-3.5,-4.9)node[left]{\footnotesize{$x_0$}};
\draw (0.3,-4.9)node[left]{\footnotesize{$x_1$}};
\draw (2.3,-4.9)node[left]{\footnotesize{$x_2$}};
\draw (3.2,-4.9)node[right]{\footnotesize{$x_{\infty}\!\!=\!\!2$}};
\draw(3.5,-4.6)--(3.5,-4.4);                
% % % % % % % % % % % % % %
                \end{tikzpicture}
}
\begin{figure}[h]
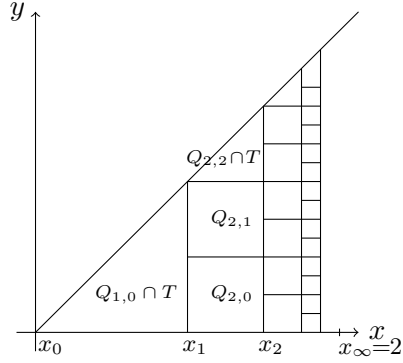

\label{fig:DMP-sol}
    \centering
    \DMP
    \caption{The distribution of the squares $Q_{k,i}$ in $T$}
    \end{figure}

The first component $p_v^1$ of the map $p_v$ is  defined as
\[
p_v^1(x,y)=a_k\left(
x-\frac{x_{k-1}+x_k}{2},
y-\frac{y^i_{k}+y^{i+1}_k}{2}
\right)
\]
for $(x,y)\in Q_{k,i}\cap T$. The second component $p_v^2$ is given by
\[
p_v^2(x,y)=
\left\{
\begin{array}{ll}
d_k\left(
x-\frac{x_{k-1}+x_k}{2},
y-\frac{y^i_{k}+y^{i+1}_k}{2}
\right), \textnormal{if} \,\, i\,\, \textnormal{is even and}\,\, i\in \{0,\dots 2^k-4\} \vspace{0.1cm}
\\
c_k\left(
x-\frac{x_{k-1}+x_k}{2},
y-\frac{y^i_{k}+y^{i+1}_k}{2}
\right), \textnormal {if}\,\, i\,\, \textnormal{is odd and}\,\, i\in \{1,\dots 2^k-3\} \vspace{0.1cm}
\\
b_k\left(
x-\frac{x_{k-1}+x_k}{2},
y-\frac{y^i_{k}+y^{i+1}_k}{2}
\right), \textnormal{if} \,\, i=2^k-2\,.
\end{array}
\right.
\]
The map $p_v$ belongs to  $W^{1,\infty}_0(\Omega;\R^2)$ and is a solution of the Dirichlet problem \eqref{OPGG} (cfr. \cite[Theorem 1]{Dacorogna:2008ih}). Moreover it is worth to observe  that $p_v$ attains the homogeneous boundary datum  in a fractal way. To be more precise we observe that on any square $Q_{k,i}$ the gradient $Dp_v$ of $p_v$ is discontinuous on the boundary of $Q_{k,i}$, on the diagonals and on the segments parallel to the axes passing from the center of $Q_{k,i}$. From this observation it is not difficult to realize that for any measurable set $B \subset \subset Q_2(0,0)$, defined the eight sets 
\[ 
\Omega^{p_v}_{\pm i}:= \big\{(x,y) \in Q_2(0,0)\; : \; D{p_v}(x,y) = \pm A_i \big\}\,,
\]
the family $B_{\pm i}:={\Omega^{p_v}_{\pm i}} \cap B$ for $i\in \{1,2,3,4\}$ is a Caccioppoli partition for $B$. If instead $B$ is an open set such that $B\cap \partial Q_2(0,0)\not= \emptyset$, then the $\Haus^1$-measure of the intersection between the set where $Dp_v$ is discontinuous and  $B$ is infinite.
 
\begin{rem}
	\label{rem:stripdistribution} Given $k\in \N$ with $k>1$, consider an even index $j$ less than $2^k-2$. 
	By using the values of $D p_v$ in $Q_{k,j}$ and $Q_{k,j+1}$ as represented in figures \ref{primo_quadrato} and \ref{secondo_quadrato},
it is easy to prove the following estimate for any $i\in \{1,2,3,4\}$
	\[
	\mathcal{L}^2 \big((Q_{k,j} \cup Q_{k,j+1}) \cap \Omega_{\pm i}^{p_v}\big) \geq \frac{1}{8} \left(\frac{1}{2^k}\right)^2.
	\]

%%%%%%%%%%%%%%%%

\newcommand{\mapad}{
	\begin{tikzpicture}[x=5mm,y=5mm]
	%\draw[->] (-7,0)node[below right]{}--(7.5,0)node[right]{\footnotesize{$x_1$}};\draw[->] (0,-7)--(0,7.5)node[left]{\footnotesize{$x_2$}};
	\draw(-4,4)--(4,4);
	\draw(-4,-4)--(4,-4);
	\draw(4,-4)--(4,4);
	\draw(-4,-4)--(-4,4);
	\draw(-4,0)--(4,0);
	\draw(0,-4)--(0,4);
	\draw(-4,-4)--(4,4);
	\draw(-4,4)--(4,-4);
\draw (1.4,1.5)node[right]{{$-A_1$}};
\draw (2,-1.5)node[right]{{$A_3$}};
\draw (0.8,-3)node[right]{{$-A_4$}};		
\draw (-2.2,-3)node[right]{{$A_2$}};		
\draw (-3.8,-1.5)node[right]{{$A_1$}};	
\draw (-3.8,1.5)node[right]{{$A_1$}};	
\draw (-2.5,2.5)node[right]{{$-A_2$}};	
\draw (0.3,2.5)node[right]{{$-A_2$}};	
		
		% % % % % % % % % % % % % %
		\end{tikzpicture}
}

\begin{figure}[h]
    \centering
    \mapad
    \caption{Values of $Dp_v$ on $Q_{k,j}$}
    \label{primo_quadrato}
    \end{figure}

%%%%%%%%%%%%%%%%

\newcommand{\mapac}{
	\begin{tikzpicture}[x=5mm,y=5mm]
	%\draw[->] (-7,0)node[below right]{}--(7.5,0)node[right]{\footnotesize{$x_1$}};\draw[->] (0,-7)--(0,7.5)node[left]{\footnotesize{$x_2$}};
	\draw(-4,4)--(4,4);
	\draw(-4,-4)--(4,-4);
	\draw(4,-4)--(4,4);
	\draw(-4,-4)--(-4,4);
	\draw(-4,0)--(4,0);
	\draw(0,-4)--(0,4);
	\draw(-4,-4)--(4,4);
	\draw(-4,4)--(4,-4);
\draw (1.4,1.5)node[right]{{$-A_1$}};
\draw (2,-1.5)node[right]{{$A_3$}};
\draw (0.8,-3)node[right]{{$-A_4$}};		
\draw (-2.2,-3)node[right]{{$-A_4$}};		
\draw (-3.8,-1.5)node[right]{{$-A_3$}};	
\draw (-3.8,1.5)node[right]{{$-A_3$}};	
\draw (-1.5,2.5)node[right]{{$A_4$}};	
\draw (0.3,2.5)node[right]{{$-A_2$}};	
		
		% % % % % % % % % % % % % %
		\end{tikzpicture}
}

\begin{figure}[h]
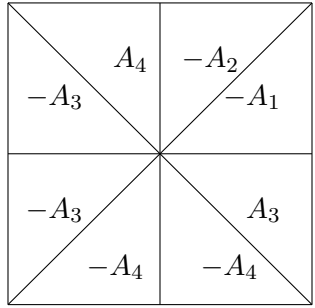

    \centering
    \mapac
    \caption{Values of $Dp_v$ on $Q_{k,j+1}$}
    \label{secondo_quadrato}
    \end{figure}
	
\end{rem}
 
We end this paragraph with a lemma that we will use in the last section, to prove that the variational problem that we will consider is well posed.

\begin{lem}\label{lemma:H3pv}
There exist a constant $c>0$ such that for any $(x,y)\in \partial Q_2(0,0)$ and any $r< \frac{1}{4}$ we have
\begin{equation}\label{eq:H3SV}
\mathcal{L}^2 \big(B((x,y),r)\cap \Omega^{p_v}_{\pm i} \big) \geq c r^2,
\end{equation}
for any $i\in \{ 1, 2,  3,  4\}$.
\end{lem}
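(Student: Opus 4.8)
The plan is to exploit the self-similar structure of the vectorial pyramid near $\partial Q_2(0,0)$ and the volume estimate of Remark \ref{rem:stripdistribution}. By the symmetry of $p_v$ with respect to the coordinate axes and the diagonals $y=\pm x$, it suffices to treat a boundary point lying on one side of the square, say a point $(2,y_0)$ with $0\le y_0\le 2$ on the edge $\{x=2\}$, and then reduce further to the portion of $T$ adjacent to that edge; the other boundary points are handled by the reflections that define $p_v$. First I would recall that the vertical strips $S_k:=(x_{k-1},x_k)\times\R$ accumulate at $\{x=2\}$ with width $x_k-x_{k-1}=2^{-(k-1)}$, and that each $S_k\cap T$ is tiled by the squares $Q_{k,i}$ of side $2^{-(k-1)}$ (those that meet $T$). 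Thus given $(2,y_0)\in\partial Q_2(0,0)$ and $r<\tfrac14$, I would choose the unique $k=k(r)$ with $2^{-k}\le r<2^{-(k-1)}$, so that $2<x_{k-1}+2^{-(k-1)}$ hmm — more precisely $x_{k-1}=2-2^{-(k-2)}$ lies within distance $\le 2^{-(k-2)}=4\cdot 2^{-k}\le 4r$ of the edge, and in fact the ball $B((2,y_0),r)$ contains, for a slightly larger comparison radius, at least one full small square $Q_{k',i}$ with $k'$ comparable to $k$ sitting against the edge.

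The key step is the following: for $r<\tfrac14$ pick $k$ so that the square $Q_{k,i}$ (or a pair $Q_{k,j}\cup Q_{k,j+1}$ as in Remark \ref{rem:stripdistribution}) of side length $\ell_k=2^{-(k-1)}$ that is closest to $(2,y_0)$ along the edge is entirely contained in $B((2,y_0),r)$ — this forces $\ell_k$ to be a fixed fraction of $r$, say $\ell_k\ge c_0 r$ for a universal $c_0$, which is possible precisely because consecutive strips have comparable widths ($x_k-x_{k-1}=\tfrac12(x_{k+1}-x_k)$) and the $y$-subdivision of each strip into the $Q_{k,i}$ has the same mesh $\ell_k$. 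Having placed such a square (or admissible pair) inside the ball, I would invoke the estimate of Remark \ref{rem:stripdistribution},
\[
\mathcal L^2\big((Q_{k,j}\cup Q_{k,j+1})\cap\Omega^{p_v}_{\pm i}\big)\ \ge\ \frac18\Big(\frac{1}{2^k}\Big)^2\ =\ \frac{1}{32}\,\ell_k^2\ \ge\ \frac{c_0^2}{32}\,r^2,
\]
valid for every $i\in\{1,2,3,4\}$, which yields \eqref{eq:H3SV} with $c=c_0^2/32$. One has to separately check the boundary points of the form $(2,y_0)$ with $y_0$ very close to $2$ (the corner region), where the relevant square is $Q_{k,2^k-2}\cap T$ and $p_v^2$ is built from $b_k$ rather than $c_k,d_k$; there the same pigeonhole on the values of $Dp_v$ gives a lower bound of the same form (one may need to use a single square rather than a consecutive pair, which only changes the constant), and likewise for $y_0$ near $0$, where $Q_{k,0}\cap T$ is a triangle of area $\tfrac12\ell_k^2$, still a fixed fraction of $r^2$.

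The main obstacle I anticipate is the bookkeeping at the corners and along the diagonal $y=x$: near these loci a ball $B((x,y),r)$ centred on $\partial Q_2(0,0)$ intersects several of the symmetry sectors at once, and the squares $Q_{k,i}$ that tile $T$ degenerate to triangles $Q_{k,i}\cap T$ (for $i=0$ and $i=2^k-2$). I would handle this by noting that the eight symmetric copies of $T$ cover a neighbourhood of $\partial Q_2(0,0)$ with bounded overlap, so it is enough to find one admissible square inside $B((x,y),r)\cap(\text{one sector})$; since $r<\tfrac14$ and the edge has length $4$, the point $(x,y)$ is at distance $\ge \tfrac32$ from any vertex unless it is itself near a vertex, and in the latter case the argument is applied inside the single sector whose closure contains that vertex. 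The only genuinely delicate point is ensuring the chosen $k$ gives a square of side $\ge c_0 r$ that fits in the ball; this is a one-line consequence of $2^{-k}\le r<2^{-(k-1)}$ together with $x_k-x_{k-1}=2^{-(k-1)}$, so that the union of the strips $S_k\cup S_{k+1}\cup\cdots$ adjacent to the edge, intersected with a suitable $y$-interval of length $\gtrsim r$, already contains a full $Q_{k+2,i}$-type square well inside $B((x,y),r)$. The remaining estimates are routine applications of Remark \ref{rem:stripdistribution} and the explicit formulas for $a_k,b_k,c_k,d_k$.
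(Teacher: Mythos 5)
Your plan follows the paper's own proof: reduce by the symmetries of $p_v$ to a point $(2,y_0)$ on one edge, pick $k$ with $2^{-k}\le r<2^{-(k-1)}$, observe that the ball then contains a pair of adjacent squares $Q_{m,j}\cup Q_{m,j+1}$ of side comparable to $r$, and invoke Remark~\ref{rem:stripdistribution}. The paper packages the ``fitting'' step by comparing with the $l^\infty$-ball $B_{r/2}\subset B((x,y),r)$; apart from that, and a small slip in your text ($x_k-x_{k-1}=2(x_{k+1}-x_k)$, not $\tfrac12(x_{k+1}-x_k)$), the two arguments coincide.
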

\begin{proof}
By the symmetries of $p_v$, it is sufficient to  consider a point $(x,y)$ in the triangle $T$ and lying on $\partial Q_2(0,0)$, i.e. we can suppose $(x,y)=(2,y^0)$ with $0\leq y^0\leq 2$. Given $r>0$ let $B_{r}$ be the open ball in the $l_{\infty}$ norm of $\R^2$ centered at $(x,y)$ with radius $r$, i.e.
\[
B_r:=\big\{(x,y)\in \R^2 \;:\; \max\{ |x-2|, |y-y^0|\}< r \big \}.
\] 

We observe that, since $x_k=2-\frac{1}{2^{k-1}}$, for any given $r \in (0,\frac{1}{4})$, choosing $k=-[\log_2(r)]+1$, one has
\[
x_{k}= 2-\frac{1}{2^{k-1}} \leq 2-r\leq x_{k+1} = 2-\frac{1}{2^{k}}
\] 
and  
\[
x_{k+1}= 2-\frac{1}{2^{k}}\leq 2-\frac r2\leq x_{k+2}=2-\frac{1}{2^{k+1}}.
\]
Therefore $B_r$ contains at least two adjacent squares of type $Q_{k,i}$, $Q_{k,i+1}$. By Remark \ref{rem:stripdistribution} and by the estimate
$r\leq \frac{1}{2^{k-1}}$,
we have
that 
\[
\mathcal{L}^2 \big(B((x,y),r)\cap \Omega^{p_v}_{\pm i} \big) \geq  \mathcal{L}^2 (B_{\frac{r}{2}} \cap \Omega_{\pm i}^{p_v}) 
\geq \frac{1}{8} \left(\frac{1}{2^{k+1}}\right)^2
\geq \frac{r^2}{128}\,,
\]
that is, \eqref{eq:H3SV} holds true for    $c=\frac{1}{128}$.
\end{proof}

\begin{rem}
	\label{rem:vitali+pyramid}
The map $p_v$ can be used to get a solution to (\ref{OPGG}) in any
open and bounded
 $\Omega \subset \R^2$. Indeed, let
$\{Q_i\}_{i\in I}$ be  a family of disjoint squares with sides parallel to the axes
that covers $\Omega$ up to a set of zero Lebesgue measure.
We can construct a solution $u  \in W^{1,\infty}(\Omega)$
to \eqref{OPGG}
by defining it as a rescaled vectorial pyramid in any $Q_i$.
In the sequel we will often use this construction.
\end{rem}

%%%%%%%%%%%%%%%%%%%%%%%%%%%%%
\section{Fine properties of solutions}
\label{sec:PropertiesSolutions}

In this section we are going to recall some geometric and topological properties 
of Lipschitz vector valued maps whose gradient takes only a finite number of values. Similar problems have been studied in the scalar setting of Lipschitz functions in \cite{Bellettini:2006wy}. 

Let $\Omega\subset \mathbb R^n$ be open and bounded, $u :\Omega \to \mathbb R^N$ be in $W^{1,\infty}(\Omega; \mathbb R^N)$. Let 
$E\subset \mathbb R^{n\times N}$ be a set composed by a finite number of matrices:
\[
E = \big\{ A_1,\dots, A_k\big\},\, k\in \N\,.
\]
We assume in the sequel that $u$ solves the inclusion
\begin{equation}
	\label{eq:inclusion}
	Du \in E \; \;\textrm{for a.e. } x\in \Omega.
	\end{equation}
As it has been done in \cite{Bellettini:2006wy}, we define the {\it singular set} of $u$ as  
\[
\Sigma^u_E:= \big\{ x \in \Omega \;: \; \textrm{either } u \textrm{ is not differentiable at } x \textrm{ or } Du(x)\not\in E\big\},
\]
and the {\it regular set } of $u$ as $\Omega \setminus \Sigma_E^u$.  

For a solution  $u$ to (\ref{eq:inclusion}), let us consider for $i=1,\dots,k$ the sets
\[ 
\Omega^u_i:= \left\{x \in \Omega\; : \; Du(x) = A_i \right\}.
\]
We note that the measurable set $\Omega_i^u$ is not necessarily of finite perimeter. 
With the aim of distinguishing somehow the {\it bad} points of $\Sigma^u_E$ from the {\it good} ones, we define $\Gamma_u$ and $\Sigma_\infty^u$ as follows:
\begin{defn}\label{def:sigmainfty}
Let $u$ be a solution to (\ref{eq:inclusion}).
A point $x_0\in \Omega$ belongs to $\Gamma_u$ if there exists a ball $B(x_0,r)\subset \Omega$ centered at  $x_0$ such that the sets %$\overline{\Omega^u_i}\cap B(x_0,r)$ 
$\Omega^u_i\cap B(x_0,r)$ 
for $i\in \{1,\dots,k\}$ form a Caccioppoli partition of $B(x_0,r)$. We set 
\[
\Sigma_\infty^u:=\partial \Omega \cup ({\Omega\setminus \Gamma_u}).
\]
\end{defn}

The set $\Sigma_\infty^u$ will play a central role in our analysis and can be seen roughly speaking as the set of points where the singularities of $Du$ accumulate and have a fractal behavior. We now state few basic properties of $\Sigma^u_\infty$.

\begin{lem}
\label{lem:closureSigma}
Let $u$ be a solution to (\ref{eq:inclusion}). Then $\Sigma^u_{\infty}$ is closed.
\end{lem}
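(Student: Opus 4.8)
The plan is to show that the complement $\Gamma_u$ is open (relative to $\Omega$), which together with the fact that $\partial\Omega$ is closed will give that $\Sigma^u_\infty = \partial\Omega \cup (\Omega\setminus\Gamma_u)$ is closed. So I would fix $x_0\in\Gamma_u$ and produce a neighborhood of $x_0$ contained in $\Gamma_u$. By definition there is a ball $B(x_0,r)\subset\Omega$ such that the sets $\Omega^u_i\cap B(x_0,r)$, $i=1,\dots,k$, form a Caccioppoli partition of $B(x_0,r)$. The natural candidate neighborhood is the smaller ball $B(x_0,r/2)$: I claim every $x_1\in B(x_0,r/2)$ belongs to $\Gamma_u$.

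The key step is that Caccioppoli partitions restrict well to smaller open sets. Pick $\rho>0$ small enough that $B(x_1,\rho)\subset B(x_0,r)$ (for instance $\rho = r/2 - |x_1-x_0|/\,$, or any $\rho < r - |x_1 - x_0|$). I need to check that $\{\Omega^u_i\cap B(x_1,\rho)\}_i$ is a Caccioppoli partition of $B(x_1,\rho)$. The conditions $E_i = E_i^1$, pairwise disjointness, and $|B(x_1,\rho)\setminus\bigcup_i (\Omega^u_i\cap B(x_1,\rho))| = 0$ are inherited immediately from the corresponding properties on $B(x_0,r)$ (density points and null sets are local notions, and $\Omega^u_i\cap B(x_1,\rho) = (\Omega^u_i\cap B(x_0,r))\cap B(x_1,\rho)$). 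The only genuine point is the finiteness of perimeter: I must verify $\sum_i P\big(\Omega^u_i\cap B(x_1,\rho), B(x_1,\rho)\big) < \infty$. But $P(F\cap U, U) \le P(F, V)$ whenever $U\subset V$ are open and $F$ has finite perimeter in $V$ — more precisely, for $A$ open, $|D\chi_{F\cap B(x_1,\rho)}|(B(x_1,\rho)) = |D\chi_F|(B(x_1,\rho)) \le |D\chi_F|(B(x_0,r))$, using that inside the open set $B(x_1,\rho)$ the sets $F\cap B(x_1,\rho)$ and $F$ have the same reduced boundary and the same perimeter measure. Summing over $i$ gives $\sum_i P(\Omega^u_i\cap B(x_1,\rho), B(x_1,\rho)) \le \sum_i P(\Omega^u_i\cap B(x_0,r), B(x_0,r)) < \infty$. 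Hence $x_1\in\Gamma_u$, so $B(x_0,r/2)\subset\Gamma_u$ and $\Gamma_u$ is open.

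Consequently $\Omega\setminus\Gamma_u$ is relatively closed in $\Omega$, i.e. $\Omega\setminus\Gamma_u = C\cap\Omega$ for some closed $C\subset\R^n$; then $\Sigma^u_\infty = \partial\Omega\cup(\Omega\setminus\Gamma_u) = \partial\Omega\cup(C\cap\Omega)$. Since any point of $\overline{\Omega\setminus\Gamma_u}$ lying outside $\Omega$ lies in $\partial\Omega$ (as $\Omega$ is open), and $\partial\Omega$ is closed, the union $\partial\Omega\cup(\Omega\setminus\Gamma_u)$ is closed in $\R^n$. I expect the main obstacle — really the only non-formal point — to be the clean statement and justification that restricting a Caccioppoli partition to a smaller open ball preserves the finite-perimeter condition; everything else is bookkeeping with densities and null sets. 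One should be a little careful that "Caccioppoli partition of $B(x_1,\rho)$" is required with perimeters measured in the open set $B(x_1,\rho)$, not in $B(x_0,r)$, but the monotonicity of the perimeter measure under shrinking the reference open set handles exactly this.
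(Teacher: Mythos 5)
Your proof is correct and follows exactly the paper's strategy: show $\Gamma_u$ is open by observing that the Caccioppoli-partition property of $\{\Omega^u_i\cap B(x_0,r)\}_i$ localizes to any smaller ball $B(x_1,\rho)\subset B(x_0,r)$, and then deduce closedness of $\Sigma^u_\infty=\partial\Omega\cup(\Omega\setminus\Gamma_u)$. The paper states the localization step and the final topological bookkeeping without detail (``Then it follows that $B(x_0,r_0/2)\subset\Gamma_u$''); you have simply spelled out both, including the monotonicity of the perimeter measure under shrinking the reference open set, which is precisely the point that makes the restriction work.
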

\begin{proof}
Being $[\Sigma^{u}_{\infty}]^{c}=[\Omega]^{c}\cup \Gamma_{u}$, the claim will follow once we have proved that $\Gamma_u$ is open. If $x_0$ belongs to $ \Gamma_u$ then there exists $r_0>0$
such that $\Omega^u_i \cap B(x_0,r_0)$ for $i\in \{1,\dots,k\}$ is a Caccioppoli partition of $B(x_0,r_0)$. Then it follows that  $B(x_0,r_0/2)$ is contained in $\Gamma_u$. This proves the lemma.  
\end{proof}

For any $x \in \Gamma_u$ we denote by $\rho(x)>0$ the radius of the largest ball centered in $x$ where $Du$ is a piecewise constant in the sense of Definition \ref{defn_pw_constant}, i.e.
\[
\rho(x):= \sup \big\{ \rho\;:\; \Omega^u_i \cap B(x,\rho) \textrm{ form a Caccioppoli partition of } B(x,\rho)\big\}.
\]
It is not hard to realize that $\rho(x)= \dist_{\mathcal{H}}(x,\Sigma_\infty^u)$, as it is proven in the next lemma. This implies that $Du$ is a map of bounded variation far away from $\Sigma_\infty^u$	.

\begin{lem}
	\label{lem:SBV-farfromSigma}
	Let $u$ be a solution to (\ref{eq:inclusion}).
	Then $\rho(x)= \dist_{\mathcal{H}}(x,\Sigma_\infty^u)$, for any $x \in \Gamma_u$. Moreover, for any $K \Subset \Gamma_u$, $Du$  is piecewise constant in $K$ and therefore $Du\in SBV(K)$ .
\end{lem}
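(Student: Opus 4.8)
The plan is to establish the identity $\rho(x)=\dist_{\mathcal H}(x,\Sigma^u_\infty)$ by proving the two inequalities separately, and then to deduce the piecewise constancy of $Du$ on compact subsets $K\Subset\Gamma_u$ from a covering argument together with the definition of $\rho$.

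First I would prove $\rho(x)\le\dist_{\mathcal H}(x,\Sigma^u_\infty)$. By definition of $\rho(x)$, for every $\rho<\rho(x)$ the sets $\Omega^u_i\cap B(x,\rho)$ form a Caccioppoli partition of $B(x,\rho)$; hence, by Definition \ref{def:sigmainfty}, every point $y\in B(x,\rho)$ lies in $\Gamma_u$ (indeed one can take a small ball around $y$ contained in $B(x,\rho)$ and restrict the partition to it, exactly as in the proof of Lemma \ref{lem:closureSigma}). Moreover $B(x,\rho)\subset\Omega$, so $B(x,\rho)$ meets neither $\partial\Omega$ nor $\Omega\setminus\Gamma_u$, i.e.\ $B(x,\rho)\cap\Sigma^u_\infty=\emptyset$. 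Letting $\rho\uparrow\rho(x)$ gives $\dist(x,\Sigma^u_\infty)\ge\rho(x)$. For the reverse inequality $\rho(x)\ge\dist_{\mathcal H}(x,\Sigma^u_\infty)$, set $d:=\dist(x,\Sigma^u_\infty)$; if $d=0$ there is nothing to prove, so assume $d>0$, which in particular forces $d\le\dist(x,\partial\Omega)$, hence $B(x,d)\subset\Omega$. Every $y\in B(x,d)$ belongs to $\Gamma_u$, so there is $r_y>0$ with $\Omega^u_i\cap B(y,r_y)$ a Caccioppoli partition of $B(y,r_y)$. I would then argue that the sets $\Omega^u_i\cap B(x,d)$ themselves form a Caccioppoli partition of $B(x,d)$: the only non-trivial condition is the finiteness of $\sum_i P(\Omega^u_i,B(x,d))$, and this follows from the local finiteness provided by the balls $B(y,r_y)$ together with an exhaustion of $B(x,d)$ by relatively compact open subsets — on each such subset $V\Subset B(x,d)$ one extracts a finite subcover $B(y_1,r_{y_1}),\dots,B(y_m,r_{y_m})$ and uses the locality of perimeter, $P(\Omega^u_i,V)\le\sum_\ell P(\Omega^u_i,B(y_\ell,r_{y_\ell}))$, to get a uniform bound; passing to the supremum over $V$ yields $\sum_i P(\Omega^u_i,B(x,d))<\infty$. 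Hence $B(x,d)$ is an admissible ball in the supremum defining $\rho(x)$, so $\rho(x)\ge d$.

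For the last assertion, let $K\Subset\Gamma_u$. Since $\Gamma_u$ is open (Lemma \ref{lem:closureSigma}) and $K$ compact, $\delta:=\dist(K,\Sigma^u_\infty)=\dist(K,[\Gamma_u]^c\cap\overline\Omega)>0$; here I use that $\Sigma^u_\infty$ is closed and, by the identity just proved, $\rho(y)=\dist(y,\Sigma^u_\infty)\ge\delta$ for every $y\in K$. Cover $K$ by finitely many balls $B(y_1,\delta/2),\dots,B(y_m,\delta/2)$ with $y_\ell\in K$; on each $B(y_\ell,\delta/2)\Subset B(y_\ell,\rho(y_\ell))$ the map $Du$ is piecewise constant, being constant on each element of the Caccioppoli partition $\{\Omega^u_i\cap B(y_\ell,\rho(y_\ell))\}$. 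Thus $Du$ takes finitely many values on $K$, $Du\in[L^\infty(K)]^{\,2\times 2}$, its distributional derivative is concentrated on $\mathcal S_{Du}\subset\bigcup_\ell(\partial^r\Omega^u_i\cap\partial^r\Omega^u_j)$, and $\mathcal H^1(\mathcal S_{Du}\cap K)<\infty$ since it is controlled by $\sum_\ell\sum_i P(\Omega^u_i,B(y_\ell,\rho(y_\ell)))<\infty$. By the characterization of bounded piecewise constant functions recalled in Subsection \ref{subsec:piecewise}, $Du$ is (equivalent to) a piecewise constant function on $K$, and since its derivative has no Cantor part, $Du\in SBV(K)$.

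The main obstacle is the reverse inequality $\rho(x)\ge\dist_{\mathcal H}(x,\Sigma^u_\infty)$: one must pass from the \emph{pointwise} information ``every $y\in B(x,d)$ has a small partition ball'' to the \emph{global} statement that $\Omega^u_i\cap B(x,d)$ is itself a Caccioppoli partition, and the delicate point there is the uniform control of the total perimeter $\sum_i P(\Omega^u_i,B(x,d))$, for which the exhaustion-plus-locality argument sketched above is needed. Everything else is a routine combination of Definition \ref{def:sigmainfty}, the openness of $\Gamma_u$, and the structure theory of Caccioppoli partitions from Subsection \ref{subsec:piecewise}.
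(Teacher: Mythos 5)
The first inequality $\rho(x)\le\dist(x,\Sigma^u_\infty)$ and the compact covering argument for the piecewise constancy on $K\Subset\Gamma_u$ are correct and match what the paper does. The problem is in your proof of the reverse inequality $\rho(x)\ge\dist(x,\Sigma^u_\infty)$.

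You try to show directly that $\{\Omega^u_i\cap B(x,d)\}$, with $d=\dist(x,\Sigma^u_\infty)$, is itself a Caccioppoli partition of the \emph{full} ball $B(x,d)$, the crux being the finiteness of $\sum_i P\big(\Omega^u_i,B(x,d)\big)$. Your exhaustion argument does not deliver this: for each relatively compact $V\Subset B(x,d)$ you extract a finite subcover and bound $\sum_i P(\Omega^u_i,V)$ by a finite quantity, but that quantity depends on $V$ (through how many balls of the subcover are needed) and nothing forces it to stay bounded as $V\nearrow B(x,d)$. Writing ``a uniform bound'' and then ``passing to the supremum over $V$'' is exactly the step that is missing. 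Moreover, the intermediate claim $\sum_i P(\Omega^u_i,B(x,d))<\infty$ is stronger than anything the lemma asserts and may in fact fail: by construction $\partial B(x,d)$ touches $\Sigma^u_\infty$, and near such a contact point the perimeter of the level sets $\Omega^u_i$ is allowed to accumulate --- this is precisely the fractal behavior at $\Sigma^u_\infty$ that the paper's setup is built to accommodate.

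The repair is short and uses only what you already wrote. Since $\rho(x)$ is defined as a \emph{supremum}, you only need $\{\Omega^u_i\cap B(x,\rho)\}$ to be a Caccioppoli partition of $B(x,\rho)$ for every $\rho<d$. For such $\rho$, the closed ball $\overline{B(x,\rho)}$ is a compact subset of $\Gamma_u$, so you can cover it with \emph{finitely many} balls $B(y_\ell,r_{y_\ell})$ on each of which the total perimeter is finite; locality and subadditivity then give $\sum_i P(\Omega^u_i,B(x,\rho))<\infty$, hence $\rho(x)\ge\rho$, and letting $\rho\uparrow d$ yields $\rho(x)\ge d$. This is just your own finite-covering argument from the last paragraph, applied to $\overline{B(x,\rho)}$ instead of to an exhaustion.

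For context, the paper establishes the reverse inequality by a different route: it assumes by contradiction that $\partial B(x,\rho(x))\subset\Gamma_u$, sets $\bar\rho:=\inf\{\rho(y):y\in\partial B(x,\rho(x))\}$, and shows that $\bar\rho>0$ would contradict the maximality of $\rho(x)$ while $\bar\rho=0$ would produce a point $y_0\in\partial B(x,\rho(x))\setminus\Gamma_u$. Once your argument is repaired as above it is arguably more direct than the paper's contradiction, but in the form you submitted it does not close.
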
  
\begin{proof}
	For the first claim it is enough to show that there exists $y\in \partial B(x, \rho(x)) \cap \Sigma_\infty^u$. Arguing  by contradiction, we assume that 
	\begin{equation}\label{eq:inclusionecontraddizione}
	\partial B(x,\rho(x))\subset \Gamma_u\,.
	\end{equation}	
	Then for any $y\in \partial B(x,\rho(x))$ we have $\rho(y)>0$. Let 
	\[
	\bar\rho:= \inf \big\{ \rho(y) :  y\in \partial B(x,\rho(x)) \big\} \geq 0.
	\]
	We claim that $\bar\rho=0$. Otherwise there would exist a finite collection of points $\{y_i\}_{i=1}^l \subset \partial B(x,\rho(x))$ and $\delta>0$ such that
	\[
	B(x,\rho(x)+\delta) \subset B(x,\rho(x)) \cup \bigcup_{i=1}^l B(y_i,\bar\rho).
	\] 
	Since $Du$ is piecewise constant in $B(y_i,\bar\rho)$ fon any $i\in \{1,\cdots,l\}$ as well as in $B(x,\rho(x))$, the previous inclusion would imply that $Du$ is piecewise constant in $B(x,\rho(x)+\delta)$. This would be in contradiction with the definition of $\rho(x)$. 
	\\
	Since $\bar\rho=0$ we can consider a sequence $\{y_n\}_{n\in\N}\subset \partial B(x,\rho(x))$ such that 
$\rho(y_n) \to  0$ as $n\to \infty$.
	By compactness there exists $y_0 \in \partial B(x,\rho(x))$ with $y_n\to y_0$, as $n\to \infty$. 
	We are going to prove that $y_0\notin \Gamma_u$. This will be a contradiction with
(\ref{eq:inclusionecontraddizione}) and then prove the claim. To this aim we observe that if  $y_0\in \Gamma_u$, then $\rho(y_0)>0$. Let $m\in \N$ be sufficiently large such that
	\[
	B(y_m,2\rho(y_m)) \subset B(y_0,\rho(y_0)):
	\]  
this is a contradiction with the definition of $\rho(y_m)$. 

We will now deduce that 
$Du$ 
is piecewise constant and belongs to $SBV(K)$ for any $K \subset \Gamma_u$. 
Let $\delta < \dist(K, \Sigma^u_\infty)$. We have that $Du$ is piecewise constant in $B(y, \delta/2)$ for any $y\in K$. Moreover, since $K$ is bounded, we can cover it with finitely many balls $B(x_i,\delta/2)$ for $i\in\{1,\dots,h\}$ with $x_i\in K$, $x_i\not\in \bigcup_{j\not=i}B(x_j,\delta/2)$ and the number of overlapping balls is at most 9 (cfr. for instance \cite[Sec 2.4]{AmbFusPal00-000}). Therefore 
	$Du$ is piecewise constant in the whole $K$. This proves that $Du$ is $SBV(K)$.  
\end{proof}

Our work stems from the idea that the smaller $\Sigma_\infty^u$ is, the better the solution $u$ to \eqref{eq:inclusion} is. So we try to define an energy integral that somehow measures how large is $\Sigma_\infty^u$ for a given solution to \eqref{eq:inclusion} and we study the associated minimization problem.

It is clear that $\Sigma_\infty^u$ can be as bad as we can think of, for instance it could be not even with locally finite $\mathcal{H}^1$-measure in $\Omega$, as the following simple example shows. 

\begin{ex}
Let $\Omega=(-2,2)\times (-2,2)$ and consider the set of matrices $E$  defined by \eqref{eq:set-E}. Let $f(x)=\sin\left(\frac{1}{|x|}\right)$ and define 
\[
G_f:=\big\{ (x,y)\in \Omega:\, x\not= 0 \textrm{ and } y=f(x) \big\} \cup \big\{(0,y)\in \Omega:\, \textrm{ with } y\in [-1,1] \big\}. 
\]
Since $\mathcal L^2(G_f)=0$ we can argue as in Remark \ref{rem:vitali+pyramid} and define a solution $u_f$ to \eqref{OPGG} associated to a Vitali covering of $\Omega \setminus G_f$ made up by squares with sides parallel to the axes. The function $u_f$ solves \eqref{eq:inclusion} and $\Sigma_\infty^{u_f}$ contains the portion of the graph of $f$ that lies in $\Omega$. It is easy to check that $\Sigma_\infty^{u_f}$ is not locally of finite $\mathcal{H}^1$-measure in $\Omega$ by considering a neighborhood of the origin.
\end{ex}

This example motivated us to impose some structural conditions on $\Sigma_\infty^u$ in order to restrict the class of solutions of \eqref{eq:inclusion} to the ones that do not exhibit these  pathological behaviors. For any $\delta>0$ let us define
\[
\Omega_\delta:= \big\{x \in \Omega \;:\; \dist(x ,\partial \Omega) > \delta \big\}\,.
\]  
\begin{defn}
	\label{def:S}
	Let $E\subset \mathbb R^{2\times 2}$ contain only a finite number of matrices. We denote by $\mathcal{S}$ the collection of maps $u \in W^{1,\infty}_0(\Omega,\R^2)$ such that $Du(x)\in E$ for almost every $x \in \Omega$ and $\Sigma^u_\infty$ satisfies the following two conditions: 
	\begin{enumerate}
\item [{(H1)}]
$(\Sigma^u_{\infty}\cap \Omega_{\delta})\cup \partial \Omega_\delta$ is connected for any $\delta$ such that $\Omega_\delta \not= \emptyset$;
\item [{(H2)}]
$\Sigma^u_{\infty}$ is locally of finite $\mathcal{H}^1$-measure in $\Omega$.
\end{enumerate}
\end{defn}

The connectedness property seems to be natural if we think about the solutions constructed as in Remark \ref{rem:vitali+pyramid}, but cannot be considered for granted for any solution to (\ref{OPGG}) as the following example shows. 

\begin{ex}	
\label{example:fisarmonica}
Let 
\[
A_1=\matrice{1}{0}{0}{1},\; A_2=\matrice{0}{1}{1}{0},\;A_3=\matrice{-1}{0}{0}{1},\;
A_4=\matrice{0}{-1}{1}{0}\,.
\]
We will consider the partition of the "double frame" 
$$
D_{stl}=\big\{(x_1,x_2)\in \R^2: s\leq \norm{(x_1,x_2)}_{l^\infty}\leq t\big\}\cup \big\{(x_1,x_2)\in \R^2: t\leq \norm{(x_1,x_2)}_{l^\infty}\leq l\big\}
$$ 
where $0<s<t<l$, composed by the sets $I_{\pm}^i, O_{\pm}^i$, 
for $i\in \{1, 2, 3, 4\}$, as in  Figure \ref{partition_doubleframe}. 

\newcommand{\trapeziooo}{
	\begin{tikzpicture}[x=5mm,y=5mm]
	\draw[->] (-7,0)node[below right]{}--(7.5,0)node[right]{\footnotesize{$x_1$}};\draw[->] (0,-7)--(0,7.5)node[left]{\footnotesize{$x_2$}};
	\draw(-6,6)--(6,6);
	\draw(-4.5,4.5)--(4.5,4.5);
	\draw(-3.2,3.2)--(3.2,3.2);
	\draw(-6,-6)--(6,-6);
	\draw(-4.5,-4.5)--(4.5,-4.5);
	\draw(-3.2,-3.2)--(3.2,-3.2);
	\draw(3.2,-3.2)--(3.2,3.2);
	\draw(6,-6)--(6,6);
	\draw(4.5,-4.5)--(4.5,4.5);
	\draw(-3.2,-3.2)--(3.2,-3.2);
	\draw(-6,-6)--(-6,6);
	\draw(-4.5,-4.5)--(-4.5,4.5);
	\draw(-3.2,-3.2)--(-3.2,3.2);
	\draw(3.2,3.2)--(6,6);
	\draw(3.2,-3.2)--(6,-6);
	\draw(-3.2,3.2)--(-6,6);
	\draw(-3.2,-3.2)--(-6,-6);
	\draw (4.5,1.5)node[left]{\footnotesize{$I_+^4$}};
	\draw (6,1.5)node[left]{\footnotesize{$O_-^2$}};
	\draw (2,3.75)node[left]{\footnotesize{$I_+^3$}};
	\draw (-1,3.75)node[left]{\footnotesize{$I_+^1$}};
	\draw (2,-4)node[left]{\footnotesize{$I_-^1$}};
\draw (-1,-4)node[left]{\footnotesize{$I_-^3$}};
\draw (-0.9,-5.5)node[left]{\footnotesize{$O_+^1$}};
	\draw (-3.15,1.5)node[left]{\footnotesize{$I_-^2$}};
		\draw (2.2,5)node[left]{\footnotesize{$O_-^1$}};
		\draw (-0.8,5)node[left]{\footnotesize{$O_-^3$}};
	\draw (6,-2)node[left]{\footnotesize{$O_-^4$}};
		\draw (4.5,-2)node[left]{\footnotesize{$I_+^2$}};
	\draw (2.1,-5.5)node[left]{\footnotesize{$O_+^3$}};
	\draw (-4.5,1.5)node[left]{\footnotesize{$O_+^4$}};
	\draw (-4.5,-2)node[left]{\footnotesize{$O_+^2$}};
		\draw (-3.15,-2)node[left]{\footnotesize{$I_-^4$}};
	\draw (3.85,-0.3)node[left]{\footnotesize{$s$}};
	\draw (4.35,-0.3)node[right]{\footnotesize{$t$}};
\draw (5.85,-0.3)node[right]{\footnotesize{$l$}};
		
		% % % % % % % % % % % % % %
		\end{tikzpicture}
}
\begin{figure}[h]
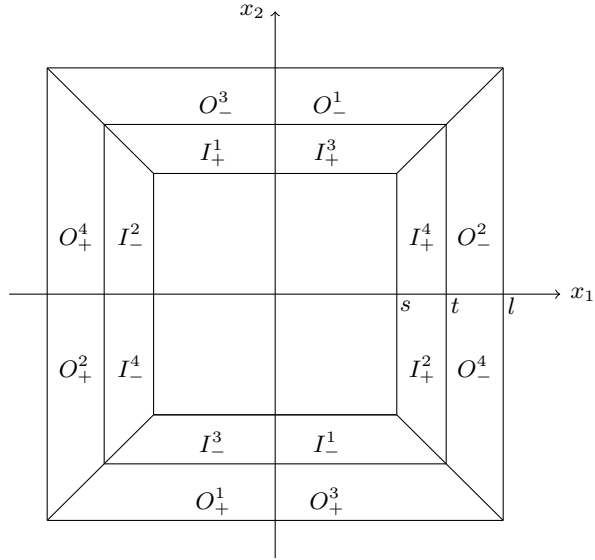

	\centering
	\trapeziooo
	\caption{The partition of  $D_{stl}$}\label{partition_doubleframe}
	\end{figure}
In $D_{stl}$ we define the family of  continuous affine piecewise maps, given by
\begin{equation}\label{familyalphabeta}
(x_1,x_2)\mapsto \begin{cases}
	 \pm A_i \cdot ( x_1, x_2)+(\alpha, \beta), &  (x_1,x_2)\in I_{\pm}^i \\
	 \pm A_i \cdot ( x_1, x_2)+(\alpha, 2t+\beta), &  (x_1,x_2)\in O_{\pm}^i
\end{cases}
\end{equation}
for $i\in \{1, 2, 3, 4\}$, where $\alpha, \beta \in \R$.
The gradient of these maps in $D_{stl}$ is constant on each of the sets $I_{\pm}^i, O_{\pm}^i$, 
for $i\in \{1, 2, 3, 4\}$, as expressed in  Figure \ref{gradient_doubleframe}.
\newcommand{\trapeziograd}{
	\begin{tikzpicture}[x=5mm,y=5mm]
	\draw[->] (-7,0)node[below right]{}--(7.5,0)node[right]{\footnotesize{$x_1$}};\draw[->] (0,-7)--(0,7.5)node[left]{\footnotesize{$x_2$}};
	\draw(-6,6)--(6,6);
	\draw(-4.5,4.5)--(4.5,4.5);
	\draw(-3.2,3.2)--(3.2,3.2);
	\draw(-6,-6)--(6,-6);
	\draw(-4.5,-4.5)--(4.5,-4.5);
	\draw(-3.2,-3.2)--(3.2,-3.2);
	\draw(3.2,-3.2)--(3.2,3.2);
	\draw(6,-6)--(6,6);
	\draw(4.5,-4.5)--(4.5,4.5);
	\draw(-3.2,-3.2)--(3.2,-3.2);
	\draw(-6,-6)--(-6,6);
	\draw(-4.5,-4.5)--(-4.5,4.5);
	\draw(-3.2,-3.2)--(-3.2,3.2);
	\draw(3.2,3.2)--(6,6);
	\draw(3.2,-3.2)--(6,-6);
	\draw(-3.2,3.2)--(-6,6);
	\draw(-3.2,-3.2)--(-6,-6);
	\draw (4.5,1.5)node[left]{\footnotesize{$A_4$}};
	\draw (6,1.5)node[left]{\footnotesize{$-A_2$}};
	\draw (2,3.75)node[left]{\footnotesize{$A_3$}};
	\draw (-1,3.75)node[left]{\footnotesize{$A_1$}};
	\draw (2,-4)node[left]{\footnotesize{$-A_1$}};
\draw (-1,-4)node[left]{\footnotesize{$-A_3$}};
\draw (-0.9,-5.5)node[left]{\footnotesize{$A_1$}};
	\draw (-3.15,1.5)node[left]{\footnotesize{$-A_2$}};
		\draw (2.2,5)node[left]{\footnotesize{$-A_1$}};
		\draw (-0.8,5)node[left]{\footnotesize{$-A_3$}};
	\draw (6,-2)node[left]{\footnotesize{$-A_4$}};
		\draw (4.5,-2)node[left]{\footnotesize{$A_2$}};
	\draw (2.1,-5.5)node[left]{\footnotesize{$A_3$}};
	\draw (-4.5,1.5)node[left]{\footnotesize{$A_4$}};
	\draw (-4.5,-2)node[left]{\footnotesize{$A_2$}};
		\draw (-3.15,-2)node[left]{\footnotesize{$-A_4$}};
	\draw (3.85,-0.3)node[left]{\footnotesize{$s$}};
	\draw (4.35,-0.3)node[right]{\footnotesize{$t$}};
\draw (5.85,-0.3)node[right]{\footnotesize{$l$}};
		
		% % % % % % % % % % % % % %
		\end{tikzpicture}
}
\begin{figure}[h]
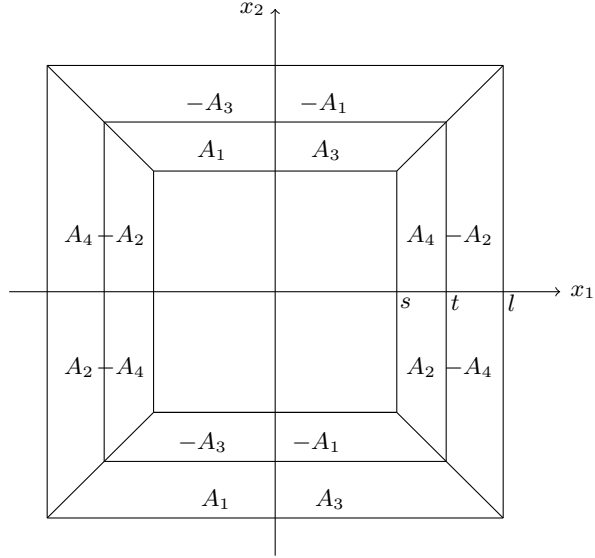

	\centering
	\trapeziograd
	\caption{The gradient in the double frame $D_{stl}$}
	\label{gradient_doubleframe}
	\end{figure}

We remark that a map of the above family is uniquely determined by its value in one point of the boundary, say the point $(l,0)$, which 
equals $(\alpha, -l+2t+\beta)$.
Moreover, for $q<r<s$, we can define in the frame  $D_{qrs}$ a similar map as above, in such a way that
we have a continuous affine piecewise map in 
$D_{qrs}\cup D_{stl}$. Indeed, 
 the directional derivatives along the overlapping boundaries agree, since  $A_4\cdot (0,1)$ and $-A_2\cdot (0,1)$ are equal
as well as $A_3\cdot (1,0)$ and $-A_1 \cdot (1,0)$. This guarantees the continuity
of the map in 
$D_{qrs}\cup D_{stl}$.

Let $\{s_j\}_{j=1}^{\infty}$ be a decreasing sequence of real positive numbers, converging to 0 and such that $s_1=1$. The square
$[-1,1]\times [-1,1]$ can be covered
by  the disjoint (up to their boundaries) "double frames"
$D_{s_{j+2}s_{j+1}s_{j}}$ (cfr. Figure \ref{doubleframesj}). 

\newcommand{\doubleframes}{
	\begin{tikzpicture}[x=5mm,y=5mm]
	\draw[->] (-6,0)node[below right]{}--(6,0)node[right]{\Small $x_1$};\draw[->] (0,-6)--(0,6)node[left]{\Small $x_2$};
	\draw(-5,5)--(5,5);
	\draw(-4,4)--(4,4);
	\draw(-2.5,2.5)--(2.5,2.5);
	\draw(-3.5,3.5)--(3.5,3.5);
		\draw(-2.5,2.5)--(2.5,2.5);
	\draw(-5,-5)--(5,-5);
	\draw(-4,-4)--(4,-4);
	\draw(-3.5,-3.5)--(3.5,-3.5);
	\draw(3.5,-3.5)--(3.5,3.5);
		\draw(2.5,-2.5)--(2.5,2.5);
	\draw(5,-5)--(5,5);
	\draw(4,-4)--(4,4);
	\draw(-3.5,-3.5)--(3.5,-3.5);
	\draw(-5,-5)--(-5,5);
\draw(-1,-1)--(-1,1);
\draw(-1,-1)--(1,-1);
\draw(1,-1)--(1,1);
\draw(-1,1)--(1,1);	
\draw(-2.5,-2.5)--(-2.5,2.5);
	\draw(-4,-4)--(-4,4);
	\draw(-3.5,-3.5)--(-3.5,3.5);
	\draw(1,1)--(5,5);
	\draw(1,-1)--(5,-5);
	\draw(-1,1)--(-5,5);
	\draw(-1,-1)--(-5,-5);
	\draw(-2.5,-2.5)--(2.5,-2.5);
\draw (4.8,-0.2)node[right]{\tiny{$s_1$}};
\draw (0.8,-0.2)node[right]{\tiny{$s_5$}};	
\draw (-0.1,-0.2)node[right]{\tiny{$s_{\dots}$}};		
\draw (3.8,-0.2)node[right]{\tiny{$s_2$}};		
\draw (3.2,-0.2)node[right]{\tiny{$s_3$}};
\draw (2.3,-0.2)node[right]{\tiny{$s_4$}};				
		% % % % % % % % % % % % % %
		\end{tikzpicture}
}
\begin{figure}[h]
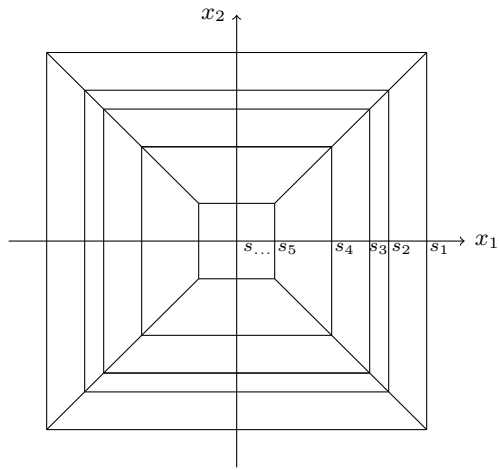

	\centering
	\doubleframes
	\caption{$\{(x_1,x_2): \norm{(x_1,x_2)}_{l^{\infty}}\leq 1\}$}\label{doubleframesj}
	\end{figure}

Now, we choose $\alpha, \beta$ in (\ref{familyalphabeta})
such that
the value of the map in $(1,0)$ is equal to $(0,1)$. 
Defining  $u$ as before on each "double frame", one can construct
 a continuous piecewise affine far from the origin map (having a fractal behavior near 0) with $Du\in E$ for almost every $x\in (-1,1)\times (-1,1)$. 
The jumps of its gradient are supported on a set containing  the boundary of the frames, whose length is 
 $$
 8 \sum_{n=2}^{\infty}s_{n+1}
 $$
and choosing for example $s_j=\frac 1j$, one has that the last quantity is infinite.

Nevertheless, it can be easily verified that this map belongs to $W^{1,\infty}(\Omega)$. To this aim, we analyze its behavior on the segment $(x_1,0)$, $0\leq x_1\leq 1$. Since the map is affine on any double frame of the construction, it is sufficient to compute the  sequence of values in $(s_{2n},0)$ and $(s_{2n+1},0)$: one has
\begin{equation}\label{eq:valuesfisarmonica}
\left(0, -s_{2n}+2\sum_{k=1}^{2n-1}(-1)^{k+1}s_k\right)\,,\,\,\,\,\,\,\,\,\,\,\,
\left(0,s_{2n+1}+2\sum_{k=1}^{2n}(-1)^{k+1}s_k\right)
\end{equation}
respectively. At the limit as $n\to \infty$,  $u$ is finite, since $s_n$ converges to $0$ and is decreasing.

Now, let 
$$
v(x_1,x_2)=\frac 12u(2x_1-1,2x_2-1)\,, \,\,\,\,\,\,\,\,\,(x_1,x_2): 0\leq x_1\leq x_2\leq 1
$$
and extend $v$ to $[-1,1]\times[-1,1]$ by symmetries with respect to 
$x_2=x_1, x_1=0, x_2=0$.
Observe that this map has the same value as the map defined for the construction of $p_v$ in subsection \ref{subsec:DMP-construction} on the boundary of $[-1,1]\times[-1,1]$. Indeed on the segment $(1,x_2)$, for $0\leq x_1\leq 1$, these maps equal $(0,1-|2x_2  -1|)$. Consequently, defined $\Omega=(-2,2)\times (-2,2)$, we can extend  $v$ in $\Omega \setminus [-1,1]\times[-1,1]$ considering the map defined in subsection \ref{subsec:DMP-construction}. In this way $Du\in E$, $u=0$ on $\partial \Omega$ and $\Sigma_{\infty}^u$ contains four isolated points, i.e. $\left(\frac 12,\pm\frac 12\right), \left(-\frac 12,\pm\frac 12\right)$.
\end{ex}

\begin{rem}
\label{rem:segmentofisarmonica}
Observe that the previous construction  cannot be adapted to define a sequence of "double frames" which, roughly speaking, does not converge to a point.
More precisely, let $s_j$ be a decreasing sequence converging to $s_{\infty}>0$, such that $s_1=1$.
The set 
\[
\big\{(x_1,x_2)\in \R^2: s_{\infty}\leq \norm{(x_1,x_2)}_{l^{\infty}}\leq 1\big\}
\]
can be covered by  disjoint (up to their boundaries) "double frames". If one defines a map $u$ as before, one has a solution to $Du\in E$ which is not bounded. Indeed, the sequence of values (\ref{eq:valuesfisarmonica}) does not converge, as $n\to \infty$. We explicitly observe that at any regular point of the boundary of the square with side $s_\infty$ the fractal behavior of $Du$ is determined by the alternation of just two values. In the Example \ref{example:fisarmonica} instead the fractalization is due to the accumulation of at least three values of $Du$. 
\end{rem}

\section{Exploiting the energy bound}
\label{sec:energybound}

As already explained in the introduction, we will use a variational approach to  select a solution to (\ref{OPGG}). Our aim is to isolate 
a solution $u$ with the smallest possible set of irregularities, that is, the smallest $\Sigma^u_{\infty}$ (see Definition \ref{def:sigmainfty}).
To do that, we define  
the following functional:
\[
\mathcal{F}(u)=\int_{\Omega}\dist(x,\partial \Omega) \chi_{\Sigma^u_{\infty}}d\mathcal{H}^1+\sum_{i,j=1}^2\int_{\Omega}\big(\dist(x,\Sigma^u_{\infty})\big)^{\alpha} \,d|Du^j_{x_i}|
\]
over the set $\mathcal{S}$ introduced in Definition \ref{def:S}.
We recall the assumptions on
$\Sigma^u_\infty$: 
	\begin{enumerate}
\item [{(H1)}]
$(\Sigma^u_{\infty}\cap \Omega_{\delta})\cup \partial \Omega_\delta$ is connected for any $\delta$ such that $\Omega_\delta \not= \emptyset$;
\item [{(H2)}]
$\Sigma^u_{\infty}$ is locally of finite $\mathcal{H}^1$-measure in $\Omega$.
\end{enumerate}
Thanks to the assumptions (H1) and (H2), the first term of $\mathcal{F}$ can be understood as  
$$
\lim_{\delta\to 0}\int_{\Omega_{\delta}}\dist (x,\partial \Omega) \chi_{\Sigma^u_{\infty}}d\mathcal{H}^1
$$
while the second one is
$$
\lim_{\delta\to 0}\lim_{h\to 0}\sum_{i,j=1}^2\int_{\Omega_{\delta}\setminus I_h(\Sigma^u_{\infty})}\big(\dist(x,\Sigma^u_{\infty})\big)^\alpha d|Du^j_{x_i}|\,.
$$
Observe that these limits exist by monotonicity and therefore they can be computed simply taking the supremum on $h$ and $\delta$.

The first term of $\mathcal{F}$  controls in some sense the fractal behavior of a solution to (\ref{OPGG}) at the boundary of
$\Omega$ and discards maps with an infinite $\mathcal{H}^1$-measure of $\Sigma^u_{\infty}$ far from $\partial \Omega$. The role of the second term is to minimize the spread of the singularities of $Du$ in $\Omega$. It is determinant to deal for example with the case when the fractalisation of singularities takes place only at $\partial \Omega$ (e.g. the square $(-a,a)\times (a,a)$). 
In this case the first term is identically zero  and the second term  performs the selection by choosing the solutions that minimize the weighted length of the jumps of  the gradient. 

In order to minimize $\mathcal{F}$ using the direct methods, we need some compactness properties of minimizing sequences. 
As first step in the next theorem we start focusing on sequences $\{u_n\}$ of maps in $\mathcal S$ with uniformly bounded energy, that is $\mathcal{F}(u_n) \leq C < \infty$ for some constant $C\in \R$.

\begin{thm}
\label{thm:compact} Let $\Omega \subset \R^{2}$ be open and such that there exist $M>0$ and $\delta_{0}>0$ with 
\begin{equation}\label{boundary-reg}
\mathcal{H}^{1}(\partial \Omega_{\delta}) \leq M \;\; ,\;\;\; \forall \, \delta < \delta_{0}. 
\end{equation}
	Let $\{u_n\}_{n\subset \mathbb N}\in \mathcal S$ be such that there exists $0<C<+\infty$ with  
	\[
	\mathcal{F}(u_n) \leq C .
	\]
Then the following holds true:
	\begin{enumerate}
		\item
		There exists  $\Sigma_\infty \subset \overline\Omega$ such that for any $\delta>0$,   
		\begin{enumerate}
		\item
		$\Sigma_\infty \cap \overline\Omega_{\delta}$ has finite $\mathcal{H}^1$-measure;
		\item $(\Sigma_{\infty} \cap \Omega_{\delta}) \cup \partial \Omega_{\delta}$ is a connected set;
		\item
		$(\Sigma^{u_n}_{\infty} \cap \overline\Omega_{\delta})\to  (\Sigma_{\infty} \cap \overline \Omega_{\delta })$ in the Hausdorff metric. 
				\end{enumerate}
		\item
		There exists a solution $u$ to (\ref{OPGG}) such that $u^j_n \weakstar  u^j$ in $W^{1,\infty}_0(\Omega)$; for any $\eps>0$ and $\delta>0$ sufficiently small,	 $u^j_{n_{x_i}} \weakstar  u^j_{x_i}$ in $BV\big(\Omega_{\delta}\setminus I_\varepsilon(\Sigma^\delta_{\infty})\big)	$	
		and in measure in $int[\Omega_{\delta}\setminus I_\varepsilon(\Sigma_{\infty})]$, up to a subsequence. 
		$Du$ is piecewise constant in  $int[\Omega_{\delta}\setminus I_\varepsilon(\Sigma_{\infty})]$.
			\item The following inclusion holds:
		\begin{equation}
		\label{eq:inclusion-01}
		\Sigma^{u}_{\infty}\cap \Omega_{\delta}\subseteq \Sigma_{\infty}\cap \Omega_{\delta}\,.
		\end{equation}
	\end{enumerate}
\end{thm}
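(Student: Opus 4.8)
The plan is to exploit the two terms of $\mathcal{F}$ separately: the first controls the size of $\Sigma^{u_n}_\infty$ and feeds Blaschke's and Gol\c ab's theorems, while the second controls $Du_n$ away from $\Sigma^{u_n}_\infty$ and feeds the compactness theorem for piecewise constant maps (Theorem \ref{compattezzapw}). \emph{Uniform estimates.} Since $\dist(\cdot,\partial\Omega)>\delta$ on $\Omega_\delta$, the first term of $\mathcal{F}(u_n)\le C$ gives $\mathcal{H}^1(\Sigma^{u_n}_\infty\cap\Omega_\delta)\le C/\delta$ for every $\delta>0$, uniformly in $n$. Setting $K^\delta_n:=(\Sigma^{u_n}_\infty\cap\overline\Omega_\delta)\cup\partial\Omega_\delta$, Lemma \ref{lem:closureSigma}, hypothesis (H1) and \eqref{boundary-reg} give $K^\delta_n\in\mathcal{K}^f(\overline\Omega)$ with $\mathcal{H}^1(K^\delta_n)\le C/\delta+M$ for $\delta<\delta_0$, uniformly in $n$. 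Likewise, on $\Omega_\delta\setminus I_\varepsilon(\Sigma^{u_n}_\infty)$ one has $\dist(\cdot,\Sigma^{u_n}_\infty)\ge\varepsilon$, hence $\sum_{i,j}|Du^j_{n,x_i}|\big(\Omega_\delta\setminus I_\varepsilon(\Sigma^{u_n}_\infty)\big)\le C\varepsilon^{-\alpha}$ uniformly in $n$; since $Du_n$ takes values in the finite set $E$ its jumps are bounded below, so $\mathcal{H}^1\big(\mathcal{S}_{Du_n}\cap(\Omega_\delta\setminus I_\varepsilon(\Sigma^{u_n}_\infty))\big)$ is bounded as well. Finally $\|Du_n\|_\infty$ is bounded and $u_n=0$ on $\partial\Omega$, so $\{u_n\}$ is bounded in $W^{1,\infty}(\Omega)$.

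\emph{Proof of (1).} Fix $\delta_k\downarrow0$ with $\delta_k<\delta_0$. By Blaschke's theorem (Theorem \ref{Blaschke}) and a diagonal extraction we obtain a subsequence (not relabelled) along which $K^{\delta_k}_n$ converges in the Hausdorff metric for every $k$; by Gol\c ab's theorem (Theorem \ref{thmGolab}) each limit lies in $\mathcal{K}^f(\overline\Omega)$, hence is connected (a Hausdorff limit of connected sets) with finite $\mathcal{H}^1$-measure, and a monotonicity argument in $\delta$ shows these limits are compatible, so that a single closed set $\Sigma_\infty\subset\overline\Omega$ is defined with $\Sigma_\infty\cap\overline\Omega_{\delta_k}$ equal to the $k$-th limit; this yields (1a) and (1b). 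The delicate point is (1c), for \emph{every} $\delta$: Hausdorff convergence of the sets $K^\delta_n$ does not pass automatically to the intersections $\Sigma^{u_n}_\infty\cap\overline\Omega_\delta$, because a point of $\Sigma_\infty$ at distance exactly $\delta$ from $\partial\Omega$ may be approximated by points of $\Sigma^{u_n}_\infty$ lying strictly closer to $\partial\Omega$. The plan is to first prove (1c) for all but countably many $\delta$ — those for which the limiting length does not concentrate on $\{\dist(\cdot,\partial\Omega)=\delta\}$ — using (H1) and \eqref{boundary-reg}, and then to reach every $\delta$ by monotone inner/outer approximation; Theorem \ref{Thm:minkowskylowerbound} enters here, to convert the uniform length bounds into bounds on $\mathcal{L}^2(I_\rho(\Sigma^{u_n}_\infty))$, which is what makes the approximation quantitative. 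I expect this localisation step to be the main obstacle of the proof.

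\emph{Proof of (2).} From the $W^{1,\infty}$ bound, up to a subsequence $u^j_n\weakstar u^j$ in $W^{1,\infty}_0(\Omega)$ and $u_n\to u$ uniformly on $\overline\Omega$ (Arzel\`a--Ascoli), so $u=0$ on $\partial\Omega$. Fix $\delta,\varepsilon>0$ and set $A:=\mathrm{int}\,[\Omega_\delta\setminus I_\varepsilon(\Sigma_\infty)]$. By (1c), for $n$ large $A\Subset\Gamma_{u_n}$ and moreover $A\subset\Omega_\delta\setminus I_{\varepsilon'}(\Sigma^{u_n}_\infty)$ for a fixed $\varepsilon'>0$; hence $Du_n$ is piecewise constant on $A$ by Lemma \ref{lem:SBV-farfromSigma}, and by the uniform estimates $\|Du_n\|_\infty+\mathcal{H}^1(\mathcal{S}_{Du_n}\cap A)$ is bounded. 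Theorem \ref{compattezzapw} then yields, along a further subsequence, weak-$*$ convergence of $Du^j_{n,x_i}$ in $BV\big(\Omega_\delta\setminus I_\varepsilon(\Sigma_\infty)\big)$ and convergence in measure on $A$ to a piecewise constant map $V$; since $Du_n\in E$ a.e.\ and $E$ is finite, $V\in E$ a.e.\ on $A$, and $V=Du$ on $A$ by uniqueness of limits. Exhausting $\Omega\setminus\Sigma_\infty$ by the sets $\mathrm{int}\,[\Omega_\delta\setminus I_\varepsilon(\Sigma_\infty)]$ along countable sequences $\delta,\varepsilon\downarrow0$, and using $\mathcal{L}^2(\Sigma_\infty)=0$ (which follows from $\mathcal{H}^1(\Sigma_\infty\cap\Omega_\delta)<\infty$ for all $\delta$), we conclude $Du\in E$ a.e.\ in $\Omega$; thus $u$ solves \eqref{OPGG}, and $Du$ is piecewise constant on each $\mathrm{int}\,[\Omega_\delta\setminus I_\varepsilon(\Sigma_\infty)]$, which is (2).

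\emph{Proof of (3).} If $x\in\Omega_\delta\setminus\Sigma_\infty$ then, $\Sigma_\infty$ being closed, $2\varepsilon:=\dist(x,\Sigma_\infty)>0$, and some ball $B(x,r)$ is contained in $\mathrm{int}\,[\Omega_\delta\setminus I_\varepsilon(\Sigma_\infty)]$, where $Du$ is piecewise constant by (2); hence $\{\Omega^u_i\cap B(x,r)\}_i$ is a Caccioppoli partition of $B(x,r)$ and $x\in\Gamma_u$. Therefore $\Omega_\delta\setminus\Sigma_\infty\subseteq\Gamma_u$, i.e.\ $\Sigma^u_\infty\cap\Omega_\delta=\Omega_\delta\setminus\Gamma_u\subseteq\Sigma_\infty\cap\Omega_\delta$, which is \eqref{eq:inclusion-01}.
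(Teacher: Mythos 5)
Your strategy coincides with the paper's: uniform bounds from the two terms of $\mathcal F$, Blaschke plus Gol\c ab for (1), piecewise-constant compactness (Theorem \ref{compattezzapw}) for (2), and a direct closedness argument for (3); parts (2) and (3) are complete and correct as written. The step you flag in (1c) --- that diagonal extraction along a countable $\delta_k\downarrow0$ does not automatically give Hausdorff convergence of $\Sigma^{u_n}_\infty\cap\overline\Omega_\delta$ for \emph{every} $\delta$, and that the monotone union $\Sigma_\infty=\bigcup_\delta\Sigma_\infty^\delta$ need not satisfy $\Sigma_\infty\cap\overline\Omega_\delta=\Sigma_\infty^\delta$ --- is a genuine subtlety, and one the paper does not address either: it extracts a Blaschke limit $\Sigma_\infty^\delta$ for each $\delta$, sets $\Sigma_\infty:=\bigcup_\delta\Sigma_\infty^\delta$, and then states (1c) as though these two sets agree. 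You sketch a plan (inner/outer approximation, isolating the $\delta$ where limit length concentrates on $\partial\Omega_\delta$) but do not carry it out, so that step is open in your write-up. It is, however, benign for the rest of the theorem: any discrepancy lives on $\partial\Omega_\delta$, which has zero $\mathcal L^2$-measure, and your proofs of (2) and (3) --- like the paper's --- only use the convergence of $\Sigma^{u_n}_\infty\cap\overline\Omega_\delta$ to the Blaschke limit $\Sigma_\infty^\delta$, which you do have.

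One genuine simplification you make in (2): the paper invokes Theorem \ref{Thm:minkowskylowerbound} to prove $\mathcal L^2\big(I_\varepsilon(\tilde\Sigma_\infty^\delta)\big)\to0$, but as you observe this already follows from $\mathcal H^1(\tilde\Sigma_\infty^\delta)<\infty$ (hence $\mathcal L^2(\tilde\Sigma_\infty^\delta)=0$), compactness of $\tilde\Sigma_\infty^\delta$, and continuity of $\mathcal L^2$ from above on the decreasing family $I_\varepsilon(\tilde\Sigma_\infty^\delta)$; the Minkowski content machinery is not needed at this point. (It does become relevant elsewhere, e.g.\ in quantitative versions of the inner/outer approximation you sketch for (1c), so your intuition that it "enters" there is reasonable.) The only place you should be slightly more explicit is in identifying the piecewise constant limit $V$ with $Du$ and concluding $Du\in E$: convergence in measure plus finiteness of $E$ gives $V\in E$ a.e., and uniqueness of distributional limits gives $V=Du$, which is what you say --- just make sure the subsequence is chosen once, diagonally in $(\delta,\varepsilon)$, so the limit $u$ is the same for all exhausting sets.
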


\begin{rem}
	We observe that, if $u_n$ is a minimizing sequence for $\mathcal{F}$, the map $u$ obtained in the previous theorem is a good candidate to be a selected solution to (\ref{OPGG}). 
	Indeed, due to \eqref{eq:inclusion-01}, we can say that, roughly speaking, the singular set of the limit map $u$ is smaller than the limit of the singular set of any minimizing sequence, at least far from the boundary of $\Omega$.   
\end{rem}

\begin{proof}
%\begin{enumerate}
%\item
Let $\delta<\delta_{0}$. We divide the proof in three steps, one for each assertion of the theorem.

\vspace{0.1cm}

\boxed{\textrm{\it Proof of (1)}}  Since $u_n\in \mathcal{S}$ is a sequence with uniformly bounded energy,  the first term of $\mathcal{F}$ is uniformly bounded on $u_n$ and this implies that 
\begin{equation}\label{eq:firsttermbounded}
\mathcal{H}^1\big(\Sigma^{u_n}_{\infty} \cap \overline\Omega_{\delta}\big)\leq C_1,
\end{equation}
for some positive constant $C_1$. Therefore, by \eqref{boundary-reg}, for $\delta < \delta_{0}$,  
\[
 \mathcal{H}^1\big((\Sigma^{u_n}_{\infty} \cap \overline \Omega_{\delta}) \cup \partial \Omega_{\delta}\big) \leq C_{1}+M < \infty.
\]
Moreover, by Lemma \ref{lem:closureSigma}, $\Sigma_{\infty}^{u_n}$ is closed. We can then apply the Blaschke's Selection Theorem (see Theorem \ref{Blaschke}) 
to the sequence $\Sigma^{u_n}_{\infty} \cap \overline \Omega_{\delta}$ to ensure the existence of a compact set $\Sigma_{\infty}^{\delta}\subset \overline\Omega_{\delta}$ such that, up to a subsequence,  
 $$
\Sigma^{u_n}_{\infty} \cap \overline \Omega_{\delta} \to \Sigma_{\infty}^{\delta}
 $$ 
in the Hausdorff metric for $n\to \infty$. It follows from the previous convergence that
 \[
 (\Sigma^{u_n}_{\infty} \cap \overline \Omega_{\delta}) \cup \partial \Omega_{\delta} = 
(\Sigma^{u_n}_{\infty} \cap \Omega_{\delta}) \cup \partial \Omega_{\delta} \to \Sigma_{\infty}^{\delta} \cup \partial \Omega_{\delta} =:  \tilde\Sigma_{\infty}^{\delta}
\]
in the Hausdorff metric for $n\to \infty$. Thanks to assumption (H1), by Gol\c ab's  Theorem (see Theorem \ref{thmGolab}), $\tilde\Sigma_{\infty}^{\delta} $ is connected and 
$$
\mathcal{H}^1\big(\tilde\Sigma_{\infty}^{\delta}\cap \Omega_{\delta}\big)\leq \liminf_{n\to \infty}\mathcal{H}^1\big(\Sigma^{u_n}_{\infty} \cap \Omega_{\delta}\big).
$$
From the previous estimate and (\ref{eq:firsttermbounded}), we easily deduce that $\Sigma_\infty^\delta$ has finite $\mathcal{H}^1$-measure. Moreover it is not hard to see that for $0<\delta_{1}\leq \delta_{2}< \delta_{0}$ we have $\Sigma_{\infty}^{\delta_{2}}\subseteq \Sigma_{\infty}^{\delta_{1}}$.
Finally we define \[
\Sigma_\infty:= \bigcup_{\delta>0}\Sigma_\infty^\delta.
\] 
%\item
\vspace{0.1cm}
\boxed{\textrm{\it Proof of (2)}}
We first claim that
\begin{equation}\label{L^2mesureto0-a}
\mathcal{L}^2\big(I_\varepsilon\big(\tilde\Sigma^\delta_{\infty}\big)\big)\to \mathcal{L}^2\big(\tilde\Sigma_{\infty}^{\delta}\big) = 0\,.
\end{equation}
To prove it we will apply Theorem \ref{Thm:minkowskylowerbound}. 
$\tilde\Sigma_{\infty}^{\delta}$ is  arcwise connected by Proposition \ref{connexion_by_arcs}, compact by Theorem \ref{Blaschke} and then rectifiable (see Section \ref{subsec:hausdorff}).
For any $x\in \tilde\Sigma_{\infty}^{\delta}$, 
 the $\mathcal{H}^1$-measure of $\tilde\Sigma_{\infty}^{\delta}\cap B(x,\rho)$ is greater than $2\rho$, by the isoperimetric inequality. To conclude it is then sufficient to use $\nu:=\mathcal{H}^1\lfloor{\tilde\Sigma_{\infty}^{\delta}}$ in Theorem \ref{Thm:minkowskylowerbound} and apply the notion of Minkowski content together with the information that $\tilde\Sigma_\infty^\delta$ has finite $\mathcal{H}^1$-measure. From \eqref{L^2mesureto0-a} we get
\begin{equation}\label{L^2mesureto0}
\mathcal{L}^2\big(I_\varepsilon\big(\Sigma^\delta_{\infty}\big)\big)\to \mathcal{L}^2\big(\Sigma_{\infty}^{\delta}\big) = 0\,.
\end{equation}
Since $D u_n$ is allowed to take only a finite number of values, for $j\in\{1,2\}$, $Du^j_n$ is uniformly bounded in $L^{\infty}(\Omega)$. This and the vanishing boundary condition imply that  $\|u^j_n\|_{L^{\infty}(\Omega)}$ is bounded. Therefore, up to a subsequence,
$$
u^j_n \weakstar  u^j \, \,\mbox{in}\, W^{1,\infty}_0(\Omega)
$$  for some $u=(u^1,u^2)\in W^{1,\infty}_0(\Omega,\mathbb{R}^2)$.

Since $u_n\in \mathcal{S}$ is a sequence with uniformly bounded energy, 
the second term of $\mathcal{F}(u_n)$ is uniformly bounded. By the Hausdorff convergence
of 
$\Sigma^{u_n}_{\infty} \cap \overline\Omega_{\delta}$ to $\Sigma_{\infty}^{\delta}$  proved in the previous step, one gets
\[
 \sum_{i,j=1}^2\int_{int\big[\Omega_{\delta}\setminus I_\varepsilon(\Sigma^\delta_{\infty})\big]}\big(\dist(x,\Sigma^{u_n}_{\infty})\big)^\alpha d|(Du^j_{n})_{x_i}|
 \leq C_2,
\]
for some positive constant $C_2$.
Moreover, for a sufficiently large $n$, 
there exists a positive constant $C_3$ such that
  $(\dist(x,\Sigma^{u_n}_{\infty}))^\alpha\geq C_3$ 
  for any $x \in \Omega_{\delta}\setminus I_\varepsilon(\Sigma^\delta_{\infty})$.
Therefore
\[
\sum_{i,j=1}^2 \left\|(u^j_{n})_{x_i}\right \|_{BV\left(\Omega_{\delta}\setminus I_\varepsilon(\Sigma^\delta_{\infty})\right)} \leq C_4
\]
for some positive constant $C_4$. 
By Lemma \ref{lem:SBV-farfromSigma},  $D u^j_n$ is piecewise constant in $\Omega_{\delta}\setminus I_{\varepsilon}(\Sigma^\delta_{\infty})$. Applying Theorem \ref{compattezzapw} in a Lipschitz domain $\tilde \Omega$ with 
\[
\Omega_{\delta}\setminus I_{2\varepsilon}(\Sigma^\delta_{\infty}) \subset\tilde \Omega \subset \Omega_{\delta}\setminus I_\varepsilon(\Sigma^\delta_{\infty}),
\] 
up to a subsequence, we have $\big(u^j_n\big)_{x_i}\to g_i^{j,\varepsilon,\delta}$ in measure on $\Omega_{\delta}\setminus I_{2\varepsilon}(\Sigma^\delta_{\infty})$  as  $n\to \infty$, for some piecewise constant function $g_i^{j,\varepsilon,\delta}$, $i=1,2$. 
The uniqueness of the limit implies that $u_{x_i}=g_i^{j,\varepsilon,\delta}$ on $int\left[\Omega_{\delta}\setminus I_{2\varepsilon}(\Sigma^\delta_{\infty})\right]$. 
Therefore, for $j=1,2$, up to a subsequence,
\[ \left(u^j_{n}\right)_{x_i}\weakstar  u^j_{x_i} \,\, \textrm{in }BV\big(\Omega_{\delta}\setminus I_{2\varepsilon}(\Sigma^\delta_{\infty})\big) \] 
and
\[
\left(u^j_{n}\right)_{x_i}\longrightarrow  u^j_{x_i} \,\,
\textrm{a.e. in } int\big[\Omega_{\delta}\setminus I_{2\varepsilon}(\Sigma^\delta_{\infty})\big]\,.
\]
This being true for every $\varepsilon$ and $\delta$, combined with \eqref{L^2mesureto0} ensures that $Du \in E$ a.e. in $\Omega$. In other words, $u$ is a solution to (\ref{OPGG}). 

\vspace{0.1cm}

\boxed{\textrm{\it Proof of (3)}}
We finally prove (\ref{eq:inclusion-01}) that is equivalent to show that $ [\Sigma_{\infty}\cap \Omega_{\delta}]^{c} \subset [ \Sigma^{u}_{\infty}\cap \Omega_{\delta}]^{c}$. 
Let  $x_0 \notin \Sigma_{\infty}\cap \Omega_{\delta}$, i.e. $x_{0}\in [\Sigma_{\infty}^{\delta}]^{c} \cup \partial \Omega_{\delta}$. 
If $x_{0}\in \partial \Omega_{\delta}$ then $x_0\in [\Sigma_{\infty}^{\delta} \cap  \Omega_{\delta}]^{c}$ since $\partial \Omega_{\delta} \subset [ \Sigma^{u}_{\infty}\cap \Omega_{\delta}]^{c}$.  
If  $x_{0}\in  [\Sigma_{\infty}^{\delta}]^{c}$,  choose $r_0$ such that
\[
r_0<\frac{\dist(x_0,\Sigma_{\infty}^{\delta})}{3}.
\] 
For a sufficiently small $\varepsilon$, $B(x_0,r_0)\cap I_
\varepsilon(\Sigma^\delta_{\infty})$ is empty. 
Since $(\Sigma^{u_n}_{\infty}\cap \overline\Omega_{\delta})$ converges to $\Sigma^\delta_{\infty}$ in the Hausdorff metric,
 $B(x_0,r_0)$ is not contained in $\Sigma^{u_n}_{\infty}\cap \Omega_{\delta}$ for $n$ sufficiently large. 
By the previous point,   $Du^j_n \to Du^j$  in $B(x_0,r_0)$ and $Du$ is piecewise constant in $B(x_0,r_0)$. It follows
that $x_0\notin \Sigma^{u}_{\infty}\cap \Omega_{\delta}$ and the claim is proved.

\end{proof}

If we can assure that the functional $\mathcal{F}$ is not identically $+\infty$ on $\mathcal S$, the previous theorem applied to a minimizing sequence of $\mathcal{F}$ on $\mathcal S$, provides us with a map $u$ that is a candidate to be a minimizer. To ensure that $u$ is indeed a minimizer, it would be needed to prove that $u$ belongs to $\mathcal{S}$ and that the functional $\mathcal{F}$ is lower semicontinuous. This is not an easy task in the whole $\mathcal S$,
so we are led to require an additional condition on the family of solutions to (\ref{OPGG}) on which we minimize $\mathcal{F}$.  
According to Remark \ref{rem:segmentofisarmonica} it seems reasonable to restrict our analysis to solutions such that the accumulations of jumps of $Du$ is created by an accumulation of mass from at least three different sets $\Omega^u_i$. We conjecture that, at least for special sets $E$ and when $\Omega$ has a simple geometry, this restriction could be formulated only in terms of a uniform lower bound on the density of the sets $\Omega^u_i$ (for at least three indexes) at $\mathcal{H}^1$-almost every point of $\Sigma^u_\infty$. Unfortunately for our proof we need a slightly stronger hypothesis that will be precisely stated in Definition \ref{def:uniformbounddensity} and that is motivated by the following theorem.

\begin{thm}
	\label{thm:equivalence}
	Let $\{u_n\}\subset \mathcal{S}$ be a sequence with uniformly bounded energy. 
	Assume that
there exist a constant $c>0$ and for any $\delta>0$ a constant $0<\overline{r}<\delta$ such that for any $u_n$ and $\mathcal{H}^1-a.e. \;x\in \Sigma^{u_n}_{\infty}\cap \Omega_\delta$ we can find at least three indices $i_1,\, i_2,\,i_3\in \{1,\dots, k\}$ with the property that for every  $r<\overline{r}$ we have
\begin{equation}\label{eq:densitaH3}
\mathcal{L}^2\left(B(x,r)\cap \Omega^{u_n}_{i_s}\right )>c r^2\,, \; \; s\in \{1,2,3\}.
\end{equation}
Then the limit function $u$ obtained in Theorem \ref{thm:compact} belongs to $\mathcal S$ and satisfies
	\begin{equation}\label{ugualianzaEinfinito}
	\Sigma^{u}_{\infty} \cap \Omega_{\delta}=  \Sigma_{\infty}\cap \Omega_{\delta}.
	\end{equation}
	Moreover $u$ satisfies 
\begin{equation}\label{eq:densitaH3u}
	\mathcal{L}^2\left(B(x,r)\cap \Omega^{u}_{i_s}\right)>c r^2\,, \; \; s\in \{1,2,3\}
\end{equation}
	for every  $r<\overline{r}$ and \begin{equation}\label{semicontinuitaGiovio}
\mathcal{F}(u)\leq \liminf_{n\to \infty}\mathcal{F}(u_n).
\end{equation}
\end{thm}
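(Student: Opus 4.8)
The plan is to bootstrap Theorem \ref{thm:compact} by one more level, using the quantitative density hypothesis \eqref{eq:densitaH3} to pin down the limit set and to obtain lower semicontinuity. Throughout, $\delta>0$ is fixed ($<\delta_0$) and $\Sigma_\infty^\delta$, $\tilde\Sigma_\infty^\delta$, $u$ are as in Theorem \ref{thm:compact}; I use the bookkeeping facts $\Sigma_\infty\cap\Omega_\delta=\Sigma_\infty^\delta\cap\Omega_\delta$ (from the monotonicity $\Sigma_\infty^{\delta_2}\subseteq\Sigma_\infty^{\delta_1}$ established in that proof) and $\mathcal L^2(\Sigma_\infty)=0$ ($\Sigma_\infty$ being a countable union of sets of finite $\Haus^1$-measure). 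The first thing I would do is pass to a further subsequence so that $\chi_{\Omega^{u_n}_i}\weakstar\theta_i$ in $L^\infty(\Omega)$ for each $i\in\{1,\dots,k\}$, with $\theta_i\in L^\infty(\Omega;[0,1])$. Since $Du_n$ takes only the finitely many values $A_1,\dots,A_k$, the convergence in measure of $(u^j_n)_{x_i}$ to $u^j_{x_i}$ on $int[\Omega_\delta\setminus I_\eps(\Sigma_\infty)]$ granted by Theorem \ref{thm:compact}(2) forces $\chi_{\Omega^{u_n}_i}\to\chi_{\Omega^u_i}$ in $L^1$ there, whence $\theta_i=\chi_{\Omega^u_i}$ a.e. outside $\Sigma_\infty$ and, as $\mathcal L^2(\Sigma_\infty)=0$, a.e. in $\Omega$. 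In particular $\mathcal L^2(B(x_n,r)\cap\Omega^{u_n}_i)\to\mathcal L^2(B(x_0,r)\cap\Omega^u_i)$ whenever $x_n\to x_0$, by testing the weak-$*$ convergence against the strongly $L^1$-convergent indicators $\chi_{B(x_n,r)}$.

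Next I would transfer the density bound, which gives \eqref{eq:densitaH3u}. Fix $x_0\in\Sigma^\delta_\infty\cap\Omega_\delta$. By (H1) each set $(\Sigma^{u_n}_\infty\cap\Omega_\delta)\cup\partial\Omega_\delta$, as well as $\tilde\Sigma_\infty^\delta$, is connected of finite $\Haus^1$-measure, hence arcwise connected (Proposition \ref{connexion_by_arcs}); joining a point to $\partial\Omega_\delta$ by an arc that must cross every small sphere shows that each such set has no isolated point in $\Omega_\delta$ and has positive local $\Haus^1$-density at every one of its points. Consequently the $\Haus^1$-full set of points of $\Sigma^{u_n}_\infty\cap\Omega_\delta$ where \eqref{eq:densitaH3} holds is dense there, so, by the Hausdorff convergence $\Sigma^{u_n}_\infty\cap\overline\Omega_\delta\to\Sigma^\delta_\infty$, I can choose ``good'' points $x_n\in\Sigma^{u_n}_\infty\cap\Omega_\delta$ with $x_n\to x_0$. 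Each $x_n$ comes with a triple of indices; there being only finitely many triples, along a further subsequence it is a fixed $(i_1,i_2,i_3)$, and passing to the limit (previous paragraph) in $\mathcal L^2(B(x_n,r)\cap\Omega^{u_n}_{i_s})>cr^2$ yields $\mathcal L^2(B(x_0,r)\cap\Omega^u_{i_s})\ge cr^2$ for every $r<\overline r$ and $s=1,2,3$. This is \eqref{eq:densitaH3u} at every point of $\Sigma^\delta_\infty\cap\Omega_\delta$ (strict after halving $c$).

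Then comes the reverse inclusion $\Sigma^\delta_\infty\cap\Omega_\delta\subseteq\Sigma^u_\infty$ (the other one being \eqref{eq:inclusion-01}), which I expect to be the main obstacle. Cover the open set $\Gamma_u\cap\Omega_\delta$ by countably many balls $B_m\Subset\Gamma_u$ in which $Du$ is piecewise constant (Lemma \ref{lem:SBV-farfromSigma}); to the Caccioppoli partition $\{\Omega^u_i\cap B_m\}$ the De Giorgi--Federer structure theorem (Theorem \ref{thm:caccio-structure}) attaches an $\Haus^1$-null set $N_m\subset B_m$ off which every point has density $1$ in one level set or $1/2$ in exactly two. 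If some $x_0\in\Gamma_u\cap\Omega_\delta\cap\Sigma^\delta_\infty$ lay outside $\bigcup_m N_m$, then by the previous paragraph three of the $\Omega^u_{i}$ would have positive lower density at $x_0$ --- impossible at a non-exceptional point. Hence $\Gamma_u\cap\Omega_\delta\cap\Sigma^\delta_\infty\subseteq\bigcup_m N_m$ is $\Haus^1$-negligible, i.e. $\Haus^1\big((\Sigma^\delta_\infty\cap\Omega_\delta)\setminus\Sigma^u_\infty\big)=0$. Since $\Sigma^u_\infty$ is closed (Lemma \ref{lem:closureSigma}), $(\Sigma^\delta_\infty\cap\Omega_\delta)\setminus\Sigma^u_\infty$ is relatively open in $\Sigma^\delta_\infty\cap\Omega_\delta$; but every nonempty relatively open subset of the latter has positive $\Haus^1$-measure (positive local density, established above), so this set is empty. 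This gives $\Sigma^\delta_\infty\cap\Omega_\delta\subseteq\Sigma^u_\infty$, hence, with \eqref{eq:inclusion-01}, the equality \eqref{ugualianzaEinfinito}; and then $u\in W^{1,\infty}_0(\Omega,\R^2)$ solves \eqref{OPGG} by Theorem \ref{thm:compact}, while \eqref{ugualianzaEinfinito} together with Theorem \ref{thm:compact}(1) shows that $\Sigma^u_\infty$ satisfies (H1) and (H2), so $u\in\mathcal S$.

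Finally, for the lower semicontinuity \eqref{semicontinuitaGiovio} I would treat the two terms of $\mathcal F$ separately and add the resulting liminf inequalities. For the first term, apply the weighted Gol\c ab semicontinuity (Theorem \ref{thm4.1Giacomini}) with $U=\Omega_\delta$ and the continuous weight $\varphi=\dist(\cdot,\partial\Omega)$ (bounded, and $\ge\delta$ on $\overline\Omega_\delta$) to $(\Sigma^{u_n}_\infty\cap\overline\Omega_\delta)\cup\partial\Omega_\delta\to\tilde\Sigma^\delta_\infty$ in $\mathcal K^f(\overline\Omega)$, obtaining $\int_{\Sigma^u_\infty\cap\Omega_\delta}\dist(x,\partial\Omega)\,d\Haus^1\le\liminf_n\int_{\Sigma^{u_n}_\infty\cap\Omega_\delta}\dist(x,\partial\Omega)\,d\Haus^1$, which is bounded by $\liminf_n$ of the first term of $\mathcal F(u_n)$; letting $\delta\downarrow 0$ recovers the first term of $\mathcal F(u)$. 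For the second term, on a compact $K\Subset\Gamma_u$ one has, for $n$ large, $K\Subset\Gamma_{u_n}$ with $(u^j_n)_{x_i}\weakstar u^j_{x_i}$ in $BV(K)$ of uniformly bounded variation (Theorem \ref{thm:compact}(2)), while the Hausdorff convergence together with \eqref{ugualianzaEinfinito} gives $(\dist(\cdot,\Sigma^{u_n}_\infty))^\alpha\to(\dist(\cdot,\Sigma^u_\infty))^\alpha$ uniformly on $K$; combining this with the lower semicontinuity of $v\mapsto\int_K f\,d|Dv|$ for continuous $f\ge0$ (Proposition \ref{prop:crit-compact}) and then letting $K$ exhaust $\Gamma_u$ (on which $\dist(\cdot,\Sigma^u_\infty)>0$ and each $Du^j_{x_i}$ is a locally finite measure) yields the analogous bound for the second term. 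Adding the two inequalities proves \eqref{semicontinuitaGiovio}. Besides the reverse inclusion, the only other delicate point I anticipate is the uniform comparison, away from $\Sigma^u_\infty$, of the weights $\dist(\cdot,\Sigma^{u_n}_\infty)$ and $\dist(\cdot,\Sigma^u_\infty)$, which must account for the pieces of $\Sigma^{u_n}_\infty$ lying outside $\overline\Omega_\delta$ and for $\partial\Omega\subset\Sigma^{u_n}_\infty$.
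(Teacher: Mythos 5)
Your proposal is correct, and it uses the same key ingredients as the paper's proof: the density hypothesis \eqref{eq:densitaH3} is transferred to $u$ through the $L^1_{loc}$-convergence of the indicators $\chi_{\Omega^{u_n}_i}$ (your $\theta_i = \chi_{\Omega^u_i}$ argument parallels the paper's \eqref{eq:weakmeas_01}--\eqref{eq:weakmeas_02}); the De Giorgi--Federer structure theorem for Caccioppoli partitions forbids three sets from having positive density at a non-exceptional point of $\Gamma_u$; connectedness (via Proposition \ref{connexion_by_arcs}) supplies a positive lower $\mathcal H^1$-density for $\Sigma^\delta_\infty$; and the lower semicontinuity uses Theorem \ref{thm4.1Giacomini} for the first term and Proposition \ref{prop:crit-compact} for the second, together with uniform convergence of the weights $\dist(\cdot,\Sigma^{u_n}_\infty)^\alpha$.

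The one place where you genuinely reorganize is the reverse inclusion $\Sigma^\delta_\infty\cap\Omega_\delta\subseteq\Sigma^u_\infty$. The paper argues pointwise by contradiction: it fixes a hypothetical $y_0\notin\Sigma^u_\infty$, uses connectedness to guarantee $\mathcal H^1(\Sigma^\delta_\infty\cap B(y_0,R))\geq R$, then picks a single structure-theorem point $\bar y$ inside the ball and shows the transferred density bound forces a third set to have positive density there. You instead first establish \eqref{eq:densitaH3u} at \emph{every} point of $\Sigma^\delta_\infty\cap\Omega_\delta$, then conclude globally: the exceptional set $(\Sigma^\delta_\infty\cap\Omega_\delta)\setminus\Sigma^u_\infty$ is $\mathcal H^1$-null by the structure theorem, but also relatively open (since $\Sigma^u_\infty$ is closed), and every nonempty relatively open subset has positive $\mathcal H^1$-measure by the local density lower bound; hence it is empty. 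This is a clean and slightly more robust packaging of the same contradiction, and establishing the density bound on all of $\Sigma^\delta_\infty$ first also makes it immediate that the bound persists for $u$, avoiding the paper's separate post-hoc verification.

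One point you flag yourself at the end --- controlling $\dist(\cdot,\Sigma^{u_n}_\infty)$ versus $\dist(\cdot,\Sigma^u_\infty)$ away from $\overline\Omega_\delta$, given that Hausdorff convergence is established only in $\overline\Omega_\delta$ --- is indeed a delicacy, but the paper glosses over it similarly (it asserts $L^\infty(\Omega_\delta)$ convergence directly from Hausdorff convergence on $\overline\Omega_\delta$). A clean way to close it is to observe that for $x\in\Omega_{2\delta}$ and any $v$, $\dist(x,\Sigma^v_\infty)=\dist\bigl(x,(\Sigma^v_\infty\cap\overline\Omega_\delta)\cup\partial\Omega_\delta\bigr)$, since the minimizer must lie in $\overline\Omega_\delta$ whenever the distance is less than $\dist(x,\partial\Omega_\delta)$; adding $\partial\Omega_\delta$ to both sets preserves Hausdorff convergence and gives the required uniform comparison on $\Omega_{2\delta}$, which suffices after sending $\delta\downarrow 0$. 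Similarly, when you claim $K\Subset\Gamma_{u_n}$ for $n$ large you should take $K\Subset\Gamma_u\cap\Omega_\delta$ so that the Hausdorff convergence on $\overline\Omega_\delta$ plus $K\cap[\Omega_\delta]^c=\emptyset$ actually controls $K\cap\Sigma^{u_n}_\infty$. With these small precisions the proposal is complete and matches the paper.
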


\begin{proof}
In Theorem \ref{thm:compact} we already proved  \eqref{eq:inclusion-01}, that is, the inclusion
$\Sigma^{u}_{\infty}\cap \Omega_{\delta} \subseteq  \Sigma_{\infty}\cap \Omega_{\delta} $.
Therefore
we only need to show
the inverse inclusion. 
Arguing by contradiction we suppose the existence of 
$$
y_0 \in (\Sigma_{\infty} \cap \Omega_{\delta})  \setminus  \Sigma^{u}_{\infty}\,.
$$ 
Since $\Sigma^u_\infty$ is closed and $y_{0}\in \Omega_{\delta}$ we can choose $0<R< \min\{\dist(y_{0},\partial \Omega_{\delta}),\dist(y_0,\Sigma^u_\infty)\}$ so that $B(y_0,R)\cap \Sigma_{\infty}^u=\emptyset$ and $B(y_0,R)\subset \Omega_{\delta}$. We recall that, by Theorem \ref{thm:compact}, the set $ (\Sigma_{\infty} \cap \Omega_{\delta}) \cup \partial \Omega_{\delta}$ is connected and then arcwise connected by  Proposition \ref{connexion_by_arcs}. 
Inclusion \eqref{eq:inclusion-01} implies the existence of $y_1\in (\Sigma_{\infty} \cap \Omega_{\delta}) \setminus B(y_0,R)$, and a path lying in $(\Sigma_{\infty} \cap \Omega_{\delta}) \cup \partial \Omega_{\delta}$, joining $y_0$ and $y_1$. By the isoperimetric inequality, we easily deduce that
\begin{equation}
	\label{eq:lenghtboundR}
	\Haus^1\big(\Sigma^\delta_\infty\cap B(y_0,R) \big) \geq R.
\end{equation}
As observed in Theorem \ref{thm:compact}, $Du$ is piecewise constant in $int[\Omega_{\delta}\setminus I_\varepsilon(\Sigma^\delta_{\infty})]$ for any $\varepsilon$. Therefore, by \eqref{eq:lenghtboundR} and Theorem \ref{thm:caccio-structure} we can choose $\overline{y}\in \Sigma^\delta_\infty \cap B(y_0,R)$ such that, up to a permutation of the indexes names,  
\begin{equation}
\label{eq:dens}
	\theta(\Omega^u_i,\overline y)=0 \;\; \forall \, i\in \{1,\dots k-2\}.
\end{equation}
Let $\eps>0$ be sufficiently small, to be chosen later.  Using \eqref{eq:dens} we deduce that there exists $r'>0$ such that
\begin{equation}
	\label{eq:densitylow}
	\frac{\mathcal{L}^2\big( B(\overline y,r ) \cap \Omega^u_i \big)}{\pi r^2} < \eps \;,\;\;\; \forall\, r\leq r' \;,\;\;\forall i\in \{1,\dots k-2\}.
\end{equation}
The Hausdorff convergence of $\Sigma_{\infty}^{u_n}\cap \overline\Omega_{\delta}$ to $\Sigma_{\infty}^{\delta}$, proved in Theorem \ref{thm:compact}, and  assumption  \eqref{eq:densitaH3} ensure us that there exist an index $j\in\{1,\dots,k-2\}$ and a sequence $x_n\in \Sigma_{\infty}^{u_n}$ with $|x_n-\bar y|<\frac{r'}{4}$ and such that
\begin{equation}\label{eq:conseguenzaH3}
\mathcal{L}^2\big( B(x_n,r ) \cap \Omega^{u_n}_j \big) > c r^2 \;,\;\; \forall \, r< \bar r.	
\end{equation}
We now observe that, since $u_n \weakstar  u$ in $W^{1,\infty}_{loc}(\Omega, \R^2)$, as proved in  Theorem \ref{thm:compact}, we have
\begin{equation}
	\label{eq:weakmeas_01}
	\lim_{n\to \infty} \mathcal{L}^2\big( \Omega^{u}_j \cap \Omega^{u_n}_i \cap B(y_0,R) \big) =0 \;\; \textrm{ for any }\, i\not= j
\end{equation} 
and 
\begin{equation}
	\label{eq:weakmeas_02}
	\lim_{n\to \infty} \mathcal{L}^2\big( \Omega^{u}_i \cap \Omega^{u_n}_i \cap B(y_0,R) \big) = \mathcal{L}^2\big( \Omega^{u}_i \cap B(y_0,R) \big) \;,\;\;\forall i\in \{1,\dots k\}
\end{equation}
Now, using \eqref{eq:weakmeas_01}, choose $r_0= \min\left\{ \bar r, \frac{r'}{4}\right \}$ and $\bar n$ sufficiently large such that for $n>\bar n$, 
\[
\mathcal{L}^2\big( ( \Omega^{u_n}_j\setminus \Omega^u_j) \cap B(y_0,R) \big) < \frac{c r_0^2}{10}\,.
\]
By (\ref{eq:conseguenzaH3}) and (\ref{eq:weakmeas_02}) and the inclusion $B(x_n,r_0)\subset B(\bar y,r')$, we have that
$$
\mathcal{L}^2\big(  B(\bar y,r') \cap \Omega^u_j  \big) >\mathcal{L}^2\big(  B(x_n,r_0) \cap \Omega^u_j  \big) > \frac{c r_0^2}{2}\,.
$$
The last inequality gives a contradiction with \eqref{eq:densitylow} if we choose $\eps$ sufficiently small, thus proving the claim. 
We explicitly note that (\ref{ugualianzaEinfinito}) guarantees that $\mathcal{H}^1(\Sigma_{\infty}^{u})$ is locally finite and $(\Sigma_u\cap \Omega_{\delta})\cup \partial \Omega_\delta$ is connected.

We are now going to check (\ref{eq:densitaH3u}) for $u$. To this aim, fix $\delta$ and let $x_0\in \Sigma^u_\infty\cap \Omega_\delta$ and $x_n\in \Sigma^{u_n}_\infty\cap \Omega_\delta$ be such that $x_n \to x_0$. Up to extracting a subsequence, we can find $i_1,\, i_2,\,i_3\in \{1,\dots, k\}$ with the property that for every  $r<\overline{r}$ we have, independently on $n$,
\[
\mathcal{L}^2(B(x_n,r)\cap \Omega^{u^n}_{i_s})>c r^2 \;, \; s\in \{1,2,3\}.
\]
Now, for sufficiently large $n$ we have 
\[
\begin{split}
\mathcal{L}^2\big( B(x_0,r)\cap \Omega^u_{i_s}\big)   \geq & \mathcal{L}^2\big( B(x_n,r)\cap \Omega^{u^n}_{i_s}\big)  - \mathcal{L}^2\big( \Omega^{u_n}_{i_s} \triangle \Omega^u_{i_s} \big) \\
 &   - \mathcal{L}^2\big( B(x_n,r) \triangle B(x_0,r)\big) \\
   > &  c r^2 - \frac{2}{n}.
\end{split}
\]
The limit as $n\to \infty$ proves (\ref{eq:densitaH3u})  for $u$. 

The theorem will be proved once we show \eqref{semicontinuitaGiovio}.
To prove the convergence of the first term of $\mathcal F(u_n)$, i.e.
\[
 \int_{\Omega}\dist(x,\partial \Omega) \chi_{\Sigma^{u_n}_{\infty}}d\mathcal{H}^1
\]
one can apply Theorem \ref{thm4.1Giacomini}.
So we are left with the study of 
\[
\sum_{i,j=1}^2 \int_{\Omega}
\big(\dist(x,\Sigma^{u_n}_{\infty}\big)^{\alpha} d|D(u_n)^j_{x_i}|.
\]
To this aim, fix the indexes $i$ and $j$, $\delta$, $\rho>0$  and let
\[
A^u_{\delta, \rho}:=\int_{\Omega_\delta \setminus I_{\rho}(\Sigma^u_{\infty})}
\big(\dist(x,\Sigma^u_{\infty})\big)^{\alpha} d|Du^j_{x_i}|.
\]
We observe that $A^u_{\delta, \rho}$ is decreasing with respect to $\delta$ and $\rho$.
Therefore, being 
\[
\mathcal{F}(u)=\lim_{\delta\to 0}\lim_{\rho\to 0} A^u_{\delta, \rho}=\sup_{\delta>0}\sup_{\rho>0}A^u_{\delta,\rho}=
\lim_{\rho\to 0}\lim_{\delta\to 0} A^u_{\delta, \rho},
\]
the inequality (\ref{semicontinuitaGiovio}) reduces to
\[
\sup_{\delta>0,\rho>0}A^u_{\delta, \rho}
\leq \liminf_{n\to \infty}\sup_{\delta>0,\rho>0}A^{u_n}_{\delta, \rho}
=
\sup_n\inf_{k>n}\sup_{\delta>0,\rho>0}A^{u_k}_{\delta, \rho},
\]
with the obvious meaning for the notation $A^{u_n}_{\delta, \rho}$. Up to a subsequence, it is therefore sufficient to prove that
\[
A^u_{\delta, \rho}\leq
\lim_{k\to \infty} A^{u_{n_k}}_{\delta_k, \rho_k},
\]
with $\delta_k\to 0$ and $\rho_k\to 0$. 
By Proposition \ref{prop:crit-compact}, since $Du^j_{n_{x_i}}\weakstar Du^j_{x_i}$, we have that
\begin{equation}\label{after_Reshetnyiak}
\int_{\Omega_\delta \setminus I_{\rho}(\Sigma^u_{\infty})}
\big(\dist(x,\Sigma^u_{\infty})\big)^{\alpha} d|Du^j_{x_i}|
\leq \liminf_{n\to \infty}
\int_{\Omega_\delta \setminus I_{\rho}(\Sigma^u_{\infty})}
\big(\dist(x,\Sigma^u_{\infty})\big)^{\alpha} d|(Du^j_{n})_{x_i}|\,.
\end{equation}
Up to a subsequence, we can assume that the liminf in the right hand side of the above inequality is a limit.

It is easy to prove that $\dist(x,\Sigma^{u_n}_{\infty})\to \dist(x,\Sigma^u_{\infty})$ in $L^{\infty}(\Omega_{\delta})$, since
$\Sigma_{\infty}^{u_n}\cap \overline\Omega_{\delta} \to \Sigma_{\infty}^{u}\cap \overline\Omega_{\delta}$ in the Hausdorff metric, by Theorem \ref{thm:compact}.

Now, one can estimate terms appearing in the right hand side of (\ref{after_Reshetnyiak}) as
\begin{equation}\label{after_after_Reshetnyiak}
\begin{split}
\int_{\Omega_\delta \setminus I_{\rho}(\Sigma^u_{\infty})} & 
\big(\dist(x,\Sigma^u_{\infty})\big)^{\alpha}  d\,|(Du^j_{n})_{x_i}|
 \\
&\leq \int_{\Omega_\delta \setminus I_{\rho}(\Sigma^u_{\infty})}
\left|\big(\dist(x,\Sigma^u_{\infty})\big)^{\alpha}-\big(\dist(x,\Sigma^{u_n}_{\infty})\big)^{\alpha}\right| d\,|(Du^j_{n})_{x_i}|
\\
& + \int_{\Omega_\delta \setminus I_{\rho}(\Sigma^u_{\infty})}
\big(\dist(x,\Sigma^{u_n}_{\infty})\big)^{\alpha} d\,|(Du^j_{n})_{x_i}|.
\end{split}
\end{equation}

It is clear that the first term of the right hand side of the above inequality
tends to 0, as $n\to \infty$, since $Du_n$ is uniformly bounded in 
$\Omega_\delta \setminus I_{\rho}(\Sigma^u_{\infty})$
and the integrand tends to 0 uniformly, as already pointed out.
Let us now comment on the second term of (\ref{after_after_Reshetnyiak}), that is,
$$
\int_{\Omega_\delta \setminus I_{\rho}(\Sigma^u_{\infty})}
\big(\dist(x,\Sigma^{u_n}_{\infty})\big)^{\alpha} d|(Du^j_{n})_{x_i}|\,.
$$
The Hausdorff convergence of $\Sigma^{u_n}_{\infty}\cap \overline\Omega_{\delta}$ to $\Sigma^{u}_{\infty}\cap \overline\Omega_{\delta}$ implies the existence of $\overline{n}\in \mathbb{N}$, such that $d_{\mathcal{H}}(\Sigma^{u_n}_{\infty}\cap \overline\Omega_{\delta}, \Sigma^{u}_{\infty}\cap \overline\Omega_{\delta})<\frac{\rho}{10}$ 
for any $n>\overline{n}$. Therefore
$$
\Omega_\delta \setminus I_{\rho}(\Sigma^u_{\infty})
\subset
\Omega_\delta \setminus I_{\frac{\rho}{3}}(\Sigma^{u_n}_{\infty})\,.
$$
This implies that
$$
\int_{\Omega_\delta \setminus I_{\rho}(\Sigma^u_{\infty})}
\big(\dist(x,\Sigma^{u_n}_{\infty})\big)^{\alpha} d|(Du^j_{n})_{x_i}|
\leq
\int_{\Omega_\delta \setminus I_{\frac{\rho}{3}}(\Sigma^{u_n}_{\infty})}
\big(\dist(x,\Sigma^{u_n}_{\infty})\big)^{\alpha} d|(Du^j_{n})_{x_i}|\,.
$$
This allows us to conclude the proof of (\ref{semicontinuitaGiovio}).
\end{proof}

The previous result motivated us to consider the following definition

\begin{defn}\label{def:uniformbounddensity}
Given $c>0$ a fixed constant and $\phi : (0,\infty) \to (0,\infty)$ a positive function, a solution $u\in \mathcal{S}$ to \eqref{OPGG} is said to be  {\it $(c,\phi)$-uniformly lower bounded in density } for any $\delta>0$, $\mathcal{H}^1-a.e. \;x\in \Sigma^{u}_{\infty}\cap \Omega_\delta$ we can find at least three indices $i_1,\, i_2,\,i_3\in \{1,\dots, k\}$ with the property that for every  $r<\phi(\delta)$ we have
\begin{equation}\label{eq:densitaH3-def}
\mathcal{L}^2(B(x,r)\cap \Omega^{u}_{i_s})>c r^2\,, \; \; s\in \{1,2,3\}.
\end{equation}
We define $\mathcal{S}_{c}^{\phi}\subset \mathcal{S}$ to be the family of all the maps $u\in \mathcal{S}$ that are $(c,\phi)$-uniformly lower bounded in density. 
\end{defn} 

Even if the requirement of $(c,\phi)$-uniformly lower bound in density for a class of solutions seems quite restrictive, Lemma \ref{lemma:H3pv} allows us to prove that for the interesting case of system \eqref{OP} there exist classes of solutions satisfying this property independently on the geometry of the domain $\Omega$. This is the content of the next lemma.

\begin{lem}
\label{lem:S_E-notempty}
Let $E:=\{\pm A_i\;:\; i\in\{1,\dots,4\}\}$ where the $A_i$ are the matrices  defined in \eqref{eq:8matrici}. Consider the constant $c$ given by Lemma \ref{lemma:H3pv}. Then the family of maps $u\in \mathcal S$ that satisfy the $(c,\phi)$-uniformly lower bound in density with $\phi(\delta)= \delta/4$ is not empty.

\end{lem}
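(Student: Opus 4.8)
The plan is to exhibit a solution of \eqref{OPGG} built by pasting rescaled copies of the vectorial pyramid $p_v$ on a Whitney-type decomposition of $\Omega$, and then to read off the required density bound from Lemma \ref{lemma:H3pv} and Remark \ref{rem:stripdistribution}.

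\emph{Construction.} Fix a Whitney covering $\{Q_j\}_j$ of $\Omega$ by dyadic squares with pairwise disjoint interiors, covering $\Omega$ up to a null set, locally finite in $\Omega$, and with $\operatorname{diam}Q_j\le\dist(Q_j,\partial\Omega)\le 4\operatorname{diam}Q_j$; then $\ell_j:=\ell(Q_j)\asymp\dist(Q_j,\partial\Omega)$ and $\dist(x,\partial\Omega)\le 5\sqrt2\,\ell_j$ for every $x\in\overline{Q_j}$. Let $\psi_j\colon Q_2(0,0)\to Q_j$ be the orientation-preserving similarity and put $u:=\tfrac{\ell_j}{4}\,(p_v\circ\psi_j^{-1})$ on each $Q_j$. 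Since $p_v$ vanishes on $\partial Q_2(0,0)$, the pieces glue continuously, $u$ extends by $0$ on $\partial\Omega$ (as $\|u\|_{L^\infty(Q_j)}\le\tfrac{\ell_j}{4}\|p_v\|_\infty\to0$ when $Q_j\to\partial\Omega$), and $Du\in E$ a.e.; hence, exactly as in Remark \ref{rem:vitali+pyramid}, $u\in W^{1,\infty}_0(\Omega,\R^2)$ solves \eqref{OPGG}.

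\emph{Showing $u\in\mathcal S$.} By the properties of $Dp_v$ recalled in Subsection \ref{subsec:DMP-construction}, $u$ partitions every ball compactly contained in some $Q_j$ into a Caccioppoli partition, so the interior of each $Q_j$ lies in $\Gamma_u$; on the other hand every point of $\partial Q_j$ has, in each neighborhood, the infinite-length discontinuity set of $p_v$ accumulating at $\partial Q_j$, so the level sets of $Du$ cannot be a Caccioppoli partition there, whence $\partial Q_j\cap\Gamma_u=\emptyset$. Therefore $\Sigma^u_\infty=\partial\Omega\cup\bigcup_j\partial Q_j$. Local finiteness of $\{Q_j\}$ gives (H2), and (H1) follows since $\bigcup_j\partial Q_j$ is connected, every component of $\bigl(\bigcup_j\partial Q_j\bigr)\cap\Omega_\delta$ abuts $\partial\Omega_\delta$, and $\partial\Omega_\delta$ is connected; thus $u\in\mathcal S$ (Definition \ref{def:S}).

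\emph{Density bound.} The corners of the $Q_j$ being countably many, hence $\mathcal H^1$-null, it suffices to treat $x$ in the relative interior of an edge of some $Q_j$; since $x\in\Omega_\delta$ gives $\dist(x,\partial\Omega)>\delta$, any $r<\phi(\delta)=\delta/4$ satisfies $r<\dist(x,\partial\Omega)/4\le\tfrac{5\sqrt2}{4}\,\ell_j$. Transporting by $\psi_j$ and setting $\rho:=4r/\ell_j\in(0,5\sqrt2)$, it is enough to prove that $\mathcal L^2\bigl(B(z,\rho)\cap\Omega^{p_v}_{\pm\ell}\bigr)\ge c\,\rho^2$ for every $z\in\partial Q_2(0,0)$, every $\rho\in(0,5\sqrt2)$ and every $\ell\in\{1,2,3,4\}$, with $c$ the constant of Lemma \ref{lemma:H3pv}: undoing the scaling then gives $\mathcal L^2\bigl(B(x,r)\cap\Omega^u_{\pm\ell}\bigr)\ge c\,r^2$ for all eight matrices $\pm A_\ell$, so in particular for at least three, i.e. $u$ is $(c,\phi)$-uniformly lower bounded in density (Definition \ref{def:uniformbounddensity}) with $\phi(\delta)=\delta/4$. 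For $\rho<\tfrac14$ this estimate is exactly Lemma \ref{lemma:H3pv}. For $\tfrac14\le\rho<5\sqrt2$, $B(z,\rho)$ contains all the pyramid blocks $Q_{k,i}$ accumulating at $\partial Q_2(0,0)$ whose level $k$ exceeds a threshold depending only on $\rho$; applying Remark \ref{rem:stripdistribution} to the admissible adjacent pairs $Q_{k,i}\cup Q_{k,i+1}$ inside the ball and summing the resulting geometric series in $k$ produces the bound — and it is precisely the normalisation $\phi(\delta)=\delta/4$ which, via the Whitney inequality $\dist(x,\partial\Omega)\le 5\sqrt2\,\ell_j$, keeps $\rho$ in the range where this summed estimate still yields the constant $c$.

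\emph{Main obstacle.} The real work is the intermediate regime $\tfrac14\le\rho\lesssim 4\sqrt2$ of the last step — equivalently, $r$ comparable to $\ell_j$ — in which Lemma \ref{lemma:H3pv} does not apply verbatim: one must carry out the summation of the scale-$2^{-k}$ contributions of Remark \ref{rem:stripdistribution} over the infinitely many pyramid blocks accumulating at the relevant portion of $\partial Q_2(0,0)$ inside $B(z,\rho)$ and check that the constant thus produced is still $c$. The remaining points — local finiteness of the Whitney covering (for (H2)), connectedness of the skeleton and of $\partial\Omega_\delta$ (for (H1)), and the scaling bookkeeping — are routine.
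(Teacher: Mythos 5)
Your construction is the same as the paper's: a Whitney/Vitali covering of $\Omega$ by dyadic squares, a rescaled copy of $p_v$ on each square, identification of $\Sigma^u_\infty$ with $\partial\Omega\cup\bigcup_j\partial Q_j$, and then the density bound read off from Lemma \ref{lemma:H3pv}. The paper uses a slightly different Vitali covering (its squares in layer $n$ are those of a dyadic grid of diagonal $<\varepsilon_n=2^{-n}$ that touch $\Omega_{n}$ but were not covered at earlier stages), but the structure of the argument is identical.

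The ``main obstacle'' you flag is, however, a genuine gap, and \emph{you have not closed it}. Lemma \ref{lemma:H3pv} on $Q_2(0,0)=(-2,2)^2$ gives the estimate only for $r<1/4$, i.e.\ for $r$ smaller than $(\text{side length})/16$. After rescaling to a Whitney square $Q_j$ of side $\ell_j$, the lemma therefore covers only $r<\ell_j/16$. For a point $x\in\partial Q_j\cap\Omega_\delta$ the Whitney relation $\dist(x,\partial\Omega)\le 5\sqrt2\,\ell_j$ yields $\ell_j>\delta/(5\sqrt2)$, hence the lemma covers only $r\lesssim \delta/(80\sqrt2)\approx 0.009\,\delta$, whereas $\phi(\delta)=\delta/4=0.25\,\delta$. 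No Whitney-type constants can remedy this: even in the limit of very flat squares one gets at best $\ell_j/16\lesssim\delta/(16\sqrt2)<\delta/4$. So there really is an intermediate range of $r$ where the lemma, applied square by square, says nothing. Your sketch of summing the scale-$2^{-k}$ contributions of Remark \ref{rem:stripdistribution} is the right kind of fix, but it is only an outline; moreover, once one carries it out one gets \emph{some} constant, not obviously the $c=1/128$ of Lemma \ref{lemma:H3pv}, and the statement of the lemma insists on that precise $c$.

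It is worth noting that the paper's own proof is no more careful here: it ends by asserting that ``the length of the diagonals of the squares of the previous Vitali covering in $\Omega_\delta$ is uniformly bounded from below'' and that this makes Lemma \ref{lemma:H3pv} sufficient, without matching constants. With the covering the paper describes, the side lengths touching $\Omega_\delta$ are $\gtrsim\delta/6$, still much less than the $16\phi(\delta)=4\delta$ that a direct application of Lemma \ref{lemma:H3pv} would require. The easiest genuine repair, which neither you nor the paper invoke explicitly, is to observe that the rest of the machinery (Definition \ref{def:uniformbounddensity}, Theorem \ref{thm:equivalence}, Corollary \ref{cor:well-posedness}) only needs \emph{some} $\phi$ for which $\mathcal S_c^\phi\ne\emptyset$; choosing, say, $\phi(\delta)=\delta/150$ puts every admissible $r$ into the range covered by the rescaled Lemma \ref{lemma:H3pv} and closes the argument without any new estimate. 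If the goal is specifically $\phi(\delta)=\delta/4$, your outlined summation over scales has to be actually performed and the resulting constant compared with $1/128$.
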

\begin{proof}
Let $\Omega$ be any domain and use a Vitali covering of $\Omega$ made up of squares, with the sides parallel to the axes, defined inductively as follows. 
Let $\varepsilon_n:=2-\sum_{i=0}^n \frac{1}{2^i}$ and define, for $n\geq 0$,
$$
\Omega_n:=\{x\in \Omega \,:\, \dist(x, \partial \Omega)>\varepsilon_n\}\,.
$$  
\begin{enumerate}
\item
For $n=0$, we consider a dyadic decomposition of $\mathbb{R}^2$, $\mathcal{D}_0$, with the diagonals of the squares smaller than $\varepsilon_0=1$. 
Let $\{Q^0_i\}_{i\in I_0}\subset \mathcal{D}_0$ be the family of the squares in $\mathcal{D}_0$ with non empty intersection with $\Omega_0$. 
Clearly we have 
$$
\Omega_0 \subset \bigcup_{i\in I_0}Q^0_i \subset \Omega\,.
$$

\item
For any $n\geq 1$ we consider a dyadic decomposition of $\mathbb{R}^2$, $\mathcal{D}_n$, with the greatest diagonal of the squares smaller than $\varepsilon_n$. 
Let $\{Q^n_i\}_{i\in I_n}\subset \mathcal{D}_n$  be the family of the squares in $\mathcal{D}_n$ with non empty intersection with $\Omega_n\setminus \bigcup_{i\in I_{n-1}}Q^{n-1}_i$. 
As before we have 
$$
\Omega_n \subset \bigcup_{j=0}^n\bigcup_{i\in I_j}Q^j_i \subset \Omega\,.
$$
\end{enumerate}
In each square of the  family  $\mathcal{D}:=\{Q^j_i\}_{j\in \mathbb{N}, i\in I_j}$,  we consider 
the solution of subsection \ref{subsec:DMP-construction}. Therefore the set $\Sigma^u_{\infty}$ will be a subset of the union of all the boundaries of the squares belonging to the Vitali covering of $\Omega$.  

It is clear that $\Sigma^u_{\infty}$ satisfies the  condition that  
$(\Sigma^u_{\infty}\cap \Omega_{\delta})\cup \partial \Omega_\delta$ is connected for any $\delta$ such that $\Omega_\delta \not= \emptyset$.
We are going to prove that $\Sigma^u_{\infty}$ is locally of finite $\mathcal{H}^1$-measure in $\Omega$.
Indeed, 
 for any $n\in \N$ there exist only finitely many squares in $\mathcal{D}$ which intersect $\Omega_n$.
Moreover, 
as $\Omega_n \nearrow \Omega$, for any compact $K\subset \Omega$, there exists $n$ such that $K\subset \Omega_n$. 
This implies that 
the length of the union of the boundaries of the squares in $\mathcal{D}$ that intersect $K$ is finite.
To prove (\ref{eq:densitaH3}), we recall that we deal with points in $\Sigma^u_{\infty}\cap \Omega_\delta$. 
Since the length of the diagonals of the squares of the previous Vitali covering in $\Omega_\delta$ is uniformly bounded from below,  it is sufficient to use Lemma \ref{lemma:H3pv}.
\end{proof}

We end this section by observing that combining the results of Theorems \ref{thm:compact} and \ref{thm:equivalence} it is easy to derive the following result.
\begin{cor}
	\label{cor:well-posedness}
	Assume that there exists at least one $u\in \mathcal S_{c}^{\phi} \subset \mathcal S$ such that $\mathcal F(u) < \infty$. Then the variational problem
\[
\inf \big\{ \mathcal{F}(u) \, ,\; u\in \mathcal S_{c}^{\phi}  \big\}
\]  
is well-posed and has a solution $u\in \mathcal S_{c}^{\phi}$.
\end{cor}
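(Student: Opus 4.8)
The plan is to run the direct method of the calculus of variations along a minimizing sequence, using Theorem~\ref{thm:compact} for compactness and Theorem~\ref{thm:equivalence} for the preservation of admissibility and the lower semicontinuity of $\mathcal{F}$. Set $m:=\inf\{\mathcal{F}(u):u\in\mathcal{S}_{c}^{\phi}\}$; since $\mathcal{F}\ge 0$ on $\mathcal{S}$ and, by hypothesis, some $u\in\mathcal{S}_{c}^{\phi}$ has $\mathcal{F}(u)<\infty$, we have $0\le m<\infty$. Choose a minimizing sequence $\{u_{n}\}\subset\mathcal{S}_{c}^{\phi}$, so $\mathcal{F}(u_{n})\to m$ and in particular $\mathcal{F}(u_{n})\le C<\infty$ for all large $n$. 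First I would apply Theorem~\ref{thm:compact} to $\{u_{n}\}$ (recall it only requires the mild regularity \eqref{boundary-reg} on the sets $\Omega_{\delta}$, which we take for granted here): up to a subsequence we get a map $u\in W^{1,\infty}_{0}(\Omega,\R^{2})$ solving \eqref{OPGG} and a closed set $\Sigma_{\infty}\subset\overline{\Omega}$ such that, for every $\delta>0$, $\Sigma^{u_{n}}_{\infty}\cap\overline{\Omega}_{\delta}\to\Sigma_{\infty}\cap\overline{\Omega}_{\delta}$ in the Hausdorff metric, $(\Sigma_{\infty}\cap\Omega_{\delta})\cup\partial\Omega_{\delta}$ is connected with finite $\mathcal{H}^{1}$-measure, and \eqref{eq:inclusion-01} holds.

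Next I would check the extra hypothesis of Theorem~\ref{thm:equivalence}, which is exactly where membership in $\mathcal{S}_{c}^{\phi}$ is used: all the $u_{n}$ are $(c,\phi)$-uniformly lower bounded in density with the \emph{same} constant $c$ and the \emph{same} function $\phi$, so for each $\delta>0$ one may take $\overline{r}:=\min\{\phi(\delta),\delta/2\}\in(0,\delta)$ and conclude that the density estimate \eqref{eq:densitaH3} holds for every $u_{n}$, at $\mathcal{H}^{1}$-a.e.\ $x\in\Sigma^{u_{n}}_{\infty}\cap\Omega_{\delta}$ and all $r<\overline{r}$, uniformly in $n$. Theorem~\ref{thm:equivalence} then yields that $u\in\mathcal{S}$, that $\Sigma^{u}_{\infty}\cap\Omega_{\delta}=\Sigma_{\infty}\cap\Omega_{\delta}$ for every $\delta$, that the density lower bound \eqref{eq:densitaH3u} holds for $u$, and that $\mathcal{F}(u)\le\liminf_{n\to\infty}\mathcal{F}(u_{n})=m$.

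Finally I would read \eqref{eq:densitaH3u} --- valid, after the routine passage from $\Omega_{\delta}$ to $\Omega_{\delta/2}$ used to follow the points $x_{n}\to x_{0}$ in the proof of Theorem~\ref{thm:equivalence}, at $\mathcal{H}^{1}$-a.e.\ point of $\Sigma^{u}_{\infty}\cap\Omega_{\delta}$ for $r<\phi(\delta)$ --- as the statement that $u$ is $(c,\phi)$-uniformly lower bounded in density, i.e.\ $u\in\mathcal{S}_{c}^{\phi}$; hence $\mathcal{F}(u)\ge m$, and with $\mathcal{F}(u)\le m$ this gives $\mathcal{F}(u)=m$, so the infimum is attained in $\mathcal{S}_{c}^{\phi}$ and the problem is well-posed. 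I expect the only point genuinely needing care --- and the one I would spell out --- to be that the limit map stays admissible, i.e.\ that the uniform density lower bound is inherited in the limit with unchanged $c$ and $\phi$; this, together with the semicontinuity, is precisely what Theorem~\ref{thm:equivalence} provides, so the corollary really amounts to assembling Theorems~\ref{thm:compact} and \ref{thm:equivalence} along a minimizing sequence, the remaining steps (finiteness of $m$, extraction, the final sandwich inequality) being routine.
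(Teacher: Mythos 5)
Your proof is correct and follows exactly the route the paper intends: the paper gives no written proof but states that the corollary follows "by combining the results of Theorems~\ref{thm:compact} and~\ref{thm:equivalence}," and your argument is precisely that combination spelled out along a minimizing sequence (compactness from Theorem~\ref{thm:compact}, admissibility preservation and lower semicontinuity from Theorem~\ref{thm:equivalence}, then the standard sandwich to conclude). The only point the paper leaves equally implicit is that the inherited density bound for the limit holds up to $r<\phi(\delta)$ rather than merely $r<\overline{r}$; this is immediate in the intended application $\phi(\delta)=\delta/4$ where $\phi(\delta)<\delta$ and one can take $\overline{r}=\phi(\delta)$, which is consistent with your treatment.
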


\section{Application to the "orthogonal" system}\label{sec:funzionalefinito}

In this section we deal with the set $E$ 
composed by the eight matrices  defined in (\ref{eq:8matrici}). Our aim is to use the minimization of $\mathcal{F}$ as a selection criterion for solutions of \eqref{OPGG}. To apply the results of the previous sections we need to show that the variational problem is well posed. This can be done if we impose some conditions on the domain $\Omega$.

 Some of the arguments in this section will be in the spirit of \cite{Croce:kw} to which we refer for more details when it will be needed. As we will see, an important role will be played by the geometry of $\Omega$. We start giving some definitions. 
\begin{defn}
Let  $h: [a,b]\to \R$  be a $C^1([a,b])$ function with $h'(t)<0$ for every $t\in [a,b]$, we call 
\[
T_{h}:=\left\{(s,t)\in \R^2: a\leq s \leq b, h(b)\leq t\leq h(s)\right\}
\]
 a \emph{triangular domain}. 
\end{defn}
The class of all  triangular domains will be denoted by $\mathcal{T}$. We write $T$ instead of $T_h$ when the definition of the function $h$ is clear by the context.

\begin{defn}
\label{defn:compatible-triang}
Let $T_h$ be a triangular domain. If $\alpha>0$ satisfies 
\[
2\left[\max\left\{\frac{1}{1+\frac{1}{c_1}},\frac{1}{1+c_2}\right\}\right]^{\alpha+1}<1,
\] 
where
\[
-c_1=\min\limits_{x_1\in [a,b]} h'\,,\,\,\,\,-c_2=-\max\limits_{x_1\in [a,b]} h',
\]
then we will refer to $T_h$ as a \emph{$\alpha$-compatible triangular domain}. 
\end{defn}

 \begin{defn}\label{defn:compatibledomain}    
 A bounded connected Lipschitz set $\Omega\subset \R^{2}$ is a  \emph{$\alpha$-compatible domain} if it can be covered by a finite number of rectangles %with at least one corner lying on $\partial \Omega$ 
 and a finite number of rotations of angle multiple of $\pi/2$ of $\alpha$-compatible triangular domains, say $R[T_{h_k}]$ for $k\in I \subset \N$ and $R$ a rotation,  with mutually disjoint interior and with the graphs of the functions $h_k$ lying on $\partial \Omega$ (cfr. for example Figure \ref{fig:compatible_domain}).
\end{defn}

\begin{figure}[h]
	\centering
	\includegraphics[scale=0.5]{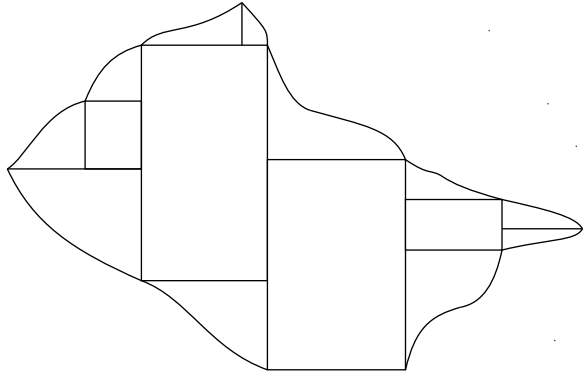}
	\caption{A compatible domain}
	\label{fig:compatible_domain}
	\end{figure}

It is not difficult to verify that a polygon is indeed a $\alpha$-compatible domain. In the sequel we will use the notation $\mathcal{S}_c$ to identify the subset of $\mathcal S$ whose elements are solutions of \eqref{OPGG} that satisfy the $(c,\phi)$-uniformly lower bound in density (according to Definition \ref{def:uniformbounddensity}) with $\phi(\delta)= \delta/4$, where $c$ is given by Lemma \ref{lemma:H3pv}. We explicitly observe that $\mathcal S_c$ is not empty thanks to the Lemma \ref{lem:S_E-notempty}. Now we can state the main theorem of this section.

\begin{thm}
\label{thm:main}
Let $\Omega$ be a $\alpha$-compatible domain. Then the variational problem
\[
\inf \big\{ \mathcal{F}(u) \, ,\; u\in \mathcal{S}_c \big\}
\]  
is well-posed and has a solution, i.e. there exists a minimizer, $u_o\in \mathcal{S}_c$, of $\mathcal{F}(u)$.
\end{thm}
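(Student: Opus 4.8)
The plan is to reduce to Corollary~\ref{cor:well-posedness}: it is enough to produce one map $u\in\mathcal S_c$ with $\mathcal F(u)<\infty$. The map supplied by the generic Vitali covering of Lemma~\ref{lem:S_E-notempty} already lies in $\mathcal S_c$, but nothing forces its energy to be finite; so the real task is to choose a covering of $\Omega$ by axis--parallel squares adapted to the $\alpha$--compatible structure and to check that the vectorial pyramid patched along it has finite energy.

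By Definition~\ref{defn:compatibledomain} I would fix a covering of $\Omega$, up to an $\mathcal L^2$--null set, by finitely many rectangles $R_1,\dots,R_m$ and finitely many rotations $R_k[T_{h_k}]$ of $\alpha$--compatible triangular domains, with pairwise disjoint interiors and the graphs of the $h_k$ on $\partial\Omega$. Each rectangle is filled, up to a null set, by a countable family of axis--parallel squares whose side lengths have finite sum (e.g.\ running the Euclidean algorithm on its two sides). Each triangular domain $T_h$ is filled by the self--similar recursion inscribing in $T_h$ the largest axis--parallel square $Q$ with a vertex on the graph of $h$: one checks that $T_h\setminus Q$ splits, with pairwise $\mathcal L^2$--negligible overlap, into two triangular domains (the second understood up to a reflection across a diagonal line), each again $\alpha$--compatible with compatibility ratio at most $\Lambda:=\max\bigl\{\tfrac{1}{1+1/c_1},\tfrac{1}{1+c_2}\bigr\}$; then one iterates. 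Measuring the ``size'' $\ell(T)$ of a triangular domain by the sum of its horizontal and vertical extents, $\ell$ is additive over the two descendants (the inscribed square exactly absorbs the overlap), so the $n$--th generation consists of $2^n$ squares of side at most $\tfrac12\Lambda^{\,n}\ell(T_h)$. Define $u$ on each square $Q$ of the resulting countable covering to be the correspondingly rescaled vectorial pyramid $p_v$ of Subsection~\ref{subsec:DMP-construction} (the rotations $R_k$ being harmless, since $E$ and the family of axis--parallel squares are $R_k$--invariant and $p_v$ has the stated symmetries). Since each $p_v$ vanishes on the boundary of its square, $u\in W^{1,\infty}_0(\Omega;\R^2)$ and $Du\in E$ a.e., i.e.\ $u$ solves \eqref{OPGG}.

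Then I would verify $u\in\mathcal S_c$. The properties of $p_v$ recalled in Subsection~\ref{subsec:DMP-construction} give $\Sigma^u_\infty\subseteq\partial\Omega\cup\bigcup_Q\partial Q$ and show the squares accumulate only towards $\partial\Omega$; since the sides of the squares contained in any $K\Subset\Omega$ have finite sum (finitely many from the triangular domains, a summable family from each rectangle), $\mathcal H^1(\Sigma^u_\infty)$ is locally finite, which is (H2). Condition (H1) follows from the connectedness of $\bigcup_Q\partial Q$ (adjacent squares share an edge and the recursion is a tree) together with the regularity of $\partial\Omega_\delta$, arguing as in \cite{Croce:kw}. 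Finally the $(c,\phi)$--uniform lower bound in density with $\phi(\delta)=\delta/4$ and the $c$ of Lemma~\ref{lemma:H3pv} holds just as in the proof of Lemma~\ref{lem:S_E-notempty}, because the squares meeting $\Omega_\delta$ have side bounded below by a fixed multiple of $\delta$.

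The heart of the proof is the bound $\mathcal F(u)<\infty$. For the first term, on each $\partial Q$ one has $\dist(x,\partial\Omega)\le\operatorname{diam}\Omega$, and $\dist(x,\partial\Omega)\le\operatorname{diam}Q$ whenever $Q$ touches $\partial\Omega$ (which holds for all squares coming from the triangular domains), so
\[
\int_\Omega \dist(x,\partial\Omega)\,\chi_{\Sigma^u_\infty}\,d\mathcal H^1\ \le\ 4\sqrt2\sum_Q s_Q^{2}\ +\ 4\operatorname{diam}(\Omega)\!\!\!\sum_{Q\subset R_1\cup\dots\cup R_m}\!\!\!\! s_Q\ <\ \infty ,
\]
the first sum being at most $4\sqrt2\,\mathcal L^2(\Omega)$. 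For the second term I would use that $\bigcup_Q\partial Q\subseteq\Sigma^u_\infty$, so $\bigl(\dist(x,\Sigma^u_\infty)\bigr)^\alpha$ vanishes there and only the interior jump set of $Dp_v$ in each square contributes; by the explicit structure of $p_v$ (in particular Remark~\ref{rem:stripdistribution}) and scaling, the portion of that jump set at distance of order $2^{-n}s_Q$ from $\partial Q$ has $\mathcal H^1$--measure $\lesssim s_Q$, whence its contribution is $\lesssim (2^{-n}s_Q)^\alpha s_Q$, and summation over $n\ge1$ (using $\alpha>0$) gives $\lesssim_\alpha s_Q^{1+\alpha}$ per square. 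Summing over all squares, the rectangles contribute $\lesssim\sum_Q s_Q^{1+\alpha}\le\sum_Q s_Q<\infty$, while each $T_{h_k}$ contributes
\[
\lesssim\ \sum_{n\ge0}2^{n}\bigl(\Lambda_k^{\,n}\,\ell(T_{h_k})\bigr)^{1+\alpha}\ =\ \ell(T_{h_k})^{1+\alpha}\sum_{n\ge0}\bigl(2\Lambda_k^{\,1+\alpha}\bigr)^{n}\ <\ \infty ,
\]
the geometric series converging precisely because $T_{h_k}$ is $\alpha$--compatible, i.e.\ $2\Lambda_k^{\,1+\alpha}<1$. Hence $\mathcal F(u)<\infty$ and Corollary~\ref{cor:well-posedness} yields a minimizer of $\mathcal F$ in $\mathcal S_c$. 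I expect the main technical obstacles to be the geometric bookkeeping of the recursion (the splitting of $T_h$, the $\alpha$--compatibility and contraction of its descendants, and the matching at the seams between rectangles and triangular domains) and the per--square estimate of the second term, which hinges on the fine structure and scaling of the vectorial pyramid and on the fact that its inter--square jumps are invisible to the weighted energy; the remaining points ($u\in W^{1,\infty}_0$, $Du\in E$, (H1), (H2), the density bound) follow the patterns of Lemma~\ref{lem:S_E-notempty} and of \cite{Croce:kw}.
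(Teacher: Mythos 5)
Your proposal is correct and takes essentially the same route as the paper: reduce to Corollary~\ref{cor:well-posedness}, produce a competitor with finite energy by patching rescaled vectorial pyramids on a square covering adapted to the $\alpha$-compatible decomposition (greedy Euclidean packing of the rectangles; the $q,u,r$-type self-similar recursion of \cite{Croce:kw} on the triangular pieces), and bound $\mathcal F^1$ and $\mathcal F^2_{ij}$ square by square, with the geometric series in the triangular part converging precisely because $2\Lambda^{1+\alpha}<1$. The only cosmetic differences from the paper's proof are that you measure the size of a triangular domain by the sum $h(0)+b$ instead of $\max\{h(0),b\}$ (both give the needed $\Lambda^{n}$-decay, though the step from additivity of $\ell$ to $\ell_i^{(n)}\le\Lambda^{n}\ell$ deserves the same one-line verification that $\ell(u(T)),\ell(r(T))\le\Lambda\,\ell(T)$, which follows from $c_2 b\le h(0)\le c_1 b$), you bound $\mathcal F^1$ directly via diameters of the squares rather than by reference to \cite[Prop.~4.4]{Croce:kw}, and the per-square jump estimate should be credited to the explicit structure of $Dp_v$ described in Subsection~\ref{subsec:DMP-construction} (boundary, diagonals and median segments of each dyadic sub-square) rather than to Remark~\ref{rem:stripdistribution}, which concerns the mass distribution used for the density bound, not the length of the jump set.
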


The rest of the section will be devoted to the proof of the previous theorem. Thanks to Corollary \ref{cor:well-posedness}, we just need to exhibit a solution to (\ref{OPGG}) belonging to $\mathcal{S}_c$ for which $\mathcal{F}$ is finite.
The solutions will be defined thanks to explicit coverings of $\Omega$ made up of squares on which we consider  the  solution of Section \ref{subsec:DMP-construction},  as a  {\it building block}. 

As it will be clear in the sequel, it is sufficient to explicitly construct the solutions only on rectangles with sides parallel to the axes and on $\alpha$-compatible triangular domains. Indeed the desired solution for a general $\alpha$-compatible domain can be defined by patching together the solutions associated to the rectangles and to the $\alpha$-compatible triangles whose union gives the domain $\Omega$ according to Definition \ref{defn:compatibledomain}. In the following subsections we will give the detailed constructions in the rectangles, in the $\alpha$-compatible triangles and the general $\alpha$-compatible domains separately.

 We recall that the functional $\mathcal{F}$ is defined by
\begin{equation}
	\label{eq:functionalalpha}
	\begin{split}
\mathcal{F}(u) & =\int_{\Omega}\dist(x,\partial \Omega) \chi_{\Sigma^u_{\infty}}d\mathcal{H}^1+\sum_{i,j=1}^2\int_{\Omega}\big(\dist(x,\Sigma^u_{\infty})\big)^\alpha \,d|Du^j_{x_i}|\\ 
	 & = \mathcal{F}^1(u)+  \sum_{i,j=1}^2 \mathcal F^2_{ij}(u)\,.
	\end{split}
\end{equation}

\subsection{Estimate for rectangular domains}
	\label{subsec:rectangular}

We start our analysis with the case of a square of side $a$:
$$
\Omega=\mathcal{Q}:=\left(-\frac a2,\frac a2\right)^2\,.
$$  
We consider the map $u_{\mathcal Q}(x)=\frac a2 p_v\left(\frac{2}{a}x\right)$ where $p_v$ has been defined in Section \ref{subsec:DMP-construction}. We will denote, with a slight abuse of notation, the squares corresponding, after the rescaling, to $Q_{k,j}$ defined in the construction of $p_v$  with the same symbols. By construction  $\Sigma^u_\infty = \partial \mathcal Q$ and therefore the first term of the functional $\mathcal F$ vanishes. To treat the second term of $\mathcal{F}$, we only have to bound for any $i,j\in\{1,2\}$ the functional $\mathcal F^2_{ij}(u)$

To this aim we start by recalling some simple consequences of the structure of $u$. Let 
\[
T=\{(x_1,x_2):0\leq x_1\leq x_2\leq a/2\}
\] 
and, for $x_n=\frac a2-\frac{a}{2^{n+1}}$, consider for any $n\in \N$ the set
\[
S_n= T \cap \big\{ (x_1,x_2) : x_n < x_2 < x_{n+1} \big\}\,.
\] 
Note that the width of $S_n$ is equal to $l_n:=\frac{a}{2^{n+2}}$ and that  
$S_n$ is covered by at most $2^{n+1}$ squares  $Q_{n,j}$ of side equal to $l_n$. Moreover, since the distance of any point $y\in S_n$ from $\partial \mathcal{Q}$ is the distance of $y$ from the vertical line $x_1=\frac a2$, we clearly have
\[
\dist(y,\partial \mathcal{Q}) < \frac{a}{2^{n+1}} = 2 l_n.
\]
Finally, we recall that for any square $Q_{n,j}$ of the construction with side $l_n$, the part of the support of $|Du^j_{x_i}|$ that intersects $Q_{n,j}$ is contained in a uniformly bounded finite number of segments of length less than $4 l_n$ (namely the boundary, the diagonals and the intersection with the lines parallel to the axes passing through the center of the square). It follows that there exists a universal constant $c$ such that
\[
\Haus^1 \big(\operatorname{supp}|Du^j_{x_i}|_{\lfloor{\overline{Q_{n,j}}}} \big) \leq c \,l_n. 
\]
 
Thanks to the previous considerations, in any given square $Q_{n,j}\subset S_n$ of side $l_n$ one has
 \begin{equation}
	\label{eq:1squareEst}
	\int_{\overline{Q_{n,j}}}\big(\dist(x,\partial \mathcal{Q})\big)^\alpha d\,|Du^j_{x_i}| \leq
	(2 l_n)^{\alpha}\int_{\overline{Q_{n,j}}} d\,|Du^j_{x_i}|
	\leq 
	(2 l_n)^{\alpha}\,
c\, l_n 
	\leq 2^\alpha c \, l_n^{\alpha+1}.
\end{equation}

By the symmetry properties  of the  vectorial pyramid $p_v$, in order to obtain an estimate of $\mathcal F^2_{ij}(u)$, we only need to bound the integral on the set $T$. Using the estimate \eqref{eq:1squareEst} and the definition of $l_n$, for $\alpha>0$, we have
\begin{equation}\label{eq:stimaquadratodilatatoDMP}
\begin{split}\int_{T}\big(\dist(x,\partial \mathcal{Q})\big)^\alpha \,d|Du^j_{x_i}| & = \sum_n
\int_{S_n}\big(\dist(x,\partial \mathcal{Q})\big)^\alpha \,d|Du^j_{x_i}| \\ 
  & \leq  K a^{\alpha+1} \sum_{n} 
\frac{1}{2^{n\alpha}}
\leq C a^{\alpha+1}\,,
\end{split}
\end{equation}
thus proving the required estimate for a squared domain.

In the case of a general rectangle
$$
\Omega = \mathcal R_{ab} := \left(-\frac a2,\frac a2\right)\times \left(-\frac b2,\frac b2\right)
$$ 
with $a > b >0$,
we will consider an explicit covering made up of squares for which we can bound the functional $\mathcal{F}$. We cover $\mathcal{R}_{ab}$ with a sequence of squares $\{Q_i\}$ choosing $Q_i$ as the largest square contained in $\mathcal{R}_{ab}\setminus \bigcup_{h=1}^{i-1} Q_h$ and with minimal value of the components (cfr. Figure \ref{fig:rectanglefinale}). We explicitly observe that, depending on the commensurability of $a$ and $b$, we could have only a finite collection of $Q_i$ or an infinite ones. In any case, it is clear that, denoted by $l_i$ the length of the side of $Q_i$, we have 
\begin{equation}
\label{eq:rectanlge-lenght}
\sum_{i} l_i= a+b.
\end{equation} 
We define the candidate function $u$ by defining it in any square $Q_i$ with the same construction we did before in the general square, suitable translated. 
Observe that  $\dist(x,\partial \mathcal{R}_{ab})$ and 
$\dist(x,\partial Q_i), i\in \N$, are bounded functions.

The desired bounds for $\mathcal F(u)$ follow easily from \eqref{eq:stimaquadratodilatatoDMP} and \eqref{eq:rectanlge-lenght}. Indeed,  for any given square $Q_i$ of the construction with side $l_i$ we have $\Sigma^u_\infty \cap \overline{Q_i} \subseteq \partial Q_i$ and consequently $\Haus^1(\Sigma^u_\infty \cap \overline{Q_i})\leq 4 l_i$. Therefore  
\begin{equation}
\label{eq:haus_boind_rect}	
\Haus^1(\Sigma^u_\infty) \leq \sum_i \Haus^1(\Sigma^u_\infty \cap \overline{Q_i}) \leq 4 \sum_i l_i = 4(a+b).
\end{equation}
Estimate \eqref{eq:haus_boind_rect} clearly implies an upper bound for $\mathcal F^1(u)$. Concerning the second term, we note that \eqref{eq:stimaquadratodilatatoDMP} implies that
\[
\mathcal{F}^2_{ij}(u) \leq C \sum_i (l_i)^{\alpha+1}
\]
and the last term is finite for any $\alpha\geq 0$ thanks to \eqref{eq:rectanlge-lenght}.

\newcommand{\rettangolo}{
	\begin{tikzpicture}[x=5mm,y=5mm]
	%\draw[->] (-6,0)node[below right]{}--(6,0)node[right]{$x_1$};\draw[->] (0,-6)--(0,6)node[left]{$x_2$};
	\draw(-10.5,3.2)--(10.5,3.2);
	\draw(-10.5,-3.2)--(10.5,-3.2);
	\draw(10.5,-3.2)--(10.5,3.2);
	\draw(-10.5,-3.2)--(-10.5,3.2);
	\draw(-4.1,-3.2)--(-4.1,3.2);
	\draw(2.3,-3.2)--(2.3,3.2);
	\draw(8.7,-3.2)--(8.7,3.2);
\draw(8.7,-1.4)--(10.5,-1.4);
\draw(8.7,0.4)--(10.5,0.4);
\draw(8.7,2.2)--(10.5,2.2);
\draw(9.7,2.2)--(9.7,3.2);
\draw(9.7,2.9)--(10.5,2.9);
\draw(10,2.9)--(10,3.2);
\draw(10.3,2.9)--(10.3,3.2);
\draw (-6.5,0)node[left]{{$Q_1$}};
\draw (0,0)node[left]{{$Q_2$}};
\draw (6.5,0)node[left]{{$Q_3$}};
\draw (10.5,-2.2)node[left]{{$Q_4$}};
\draw (10.5,-0.5)node[left]{{$Q_5$}};
\draw (10.5,1.2)node[left]{{$Q_6$}};
\draw (10,2.7)node[left]{{$Q_7$}};
		% % % % % % % % % % % % % %
		\end{tikzpicture}
}
\begin{figure}[h]
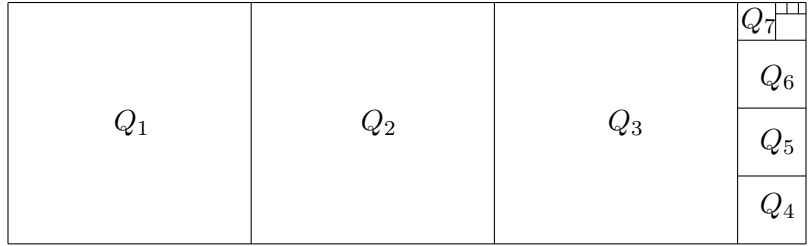

	\centering
	\rettangolo
	\caption{A covering of a rectangle}
	\label{fig:rectanglefinale}
	\end{figure}

\subsection{Estimate for triangular domains}
\label{subsec:triangular}

Given a $\alpha$-compatible triangular domain $T_h\in \mathcal{T}$,  we cover it with a countable number of squares using the construction defined in \cite{Croce:kw}, that we recall here for the reader's convenience.

We start by introducing three operators defined on $\mathcal{T}$. For a given 
$$T=\{(x_1,x_2): a \leq x_1 \leq b, h(b)
\leq x_2 \leq h(x_1)\}\in \mathcal{T}\,,$$ 
 let $x_1^0$ be such that
$h(x_1^0)=x_1^0+h(b)-a$ and define
\[
\begin{array}{lcl}
 \displaystyle  q(T) & := &  \displaystyle  (a,x_1^0)\times (h(b),h(b)+x_1^0-a); \\
  u(T) & := & \displaystyle  \{(x_1,x_2)\in T: a<x_1<x_1^0, h(b)+x_1^0<x_2<h(x_1)\}; \\
  r(T) & := &  \displaystyle  \{(x_1,x_2)\in T: a+x_1^0<x_1<b, h(b)<x_2<h(x_1)\}.
\end{array}
\]
We explicitly observe that $u$ and $r$ have values in $\mathcal{T}$
while $q$ maps any triangular domain $T_h$ to a square contained in $T_h$.
(see Figure \ref{fig-01}). 

\begin{figure}
\label{fig-01}
\centering
\includegraphics[width=7.5cm]{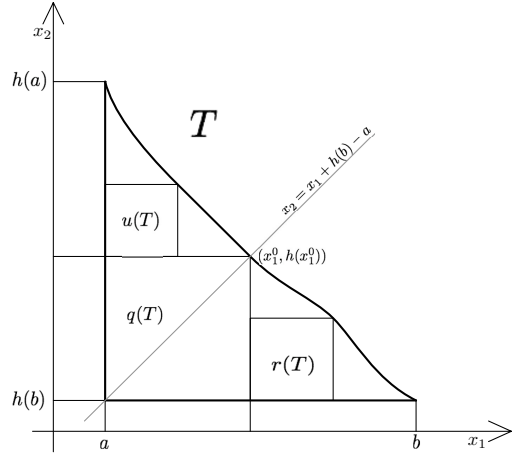}
\caption{definition of the operators $q$, $u$ and $r$.}
\end{figure}

\begin{defn}[Covering of $T$]\label{definition_covering}
Let for $m \in \N$
\[
S_m:=\left\{ \sigma=(\alpha_1,\dots,\alpha_m) \;:\;\alpha_i\in \{u,r\} \right\},
\]
be the set of all the $m$-permutations of the two letters $u$ and
$r$. For $\sigma\in S_m$, using the notation 
\[
\sigma(T)=\alpha_1 \circ \alpha_2 \circ \cdots \circ \alpha_m(T),
\]
we set
\[
Q_T^{m,\sigma}= q(\sigma(T))\;\;;\;\;\;\sigma \in S_m.
\]
We finally define the following family of squares contained in $T$:
\[
\mathcal{Q}(T):=\{Q_T^{m,\sigma} \;:\; m\in \N\cup \{0\} \;,\;\;\sigma\in S_m\}.
\]  
\end{defn}
\begin{rem}\label{rem:vitaliSEE}
It  may be useful to think of $\mathcal{Q}(T)$ as being constructed in steps, starting from $m=1$ and adding at 
step $m$ the squares $Q_T^{m,\sigma}$ with $\sigma\in S_m$. Since the
cardinality of $S_m$ is equal to $2^m$, we add $2^m$ squares at the $m$-th
step. Therefore the first steps of the construction are (see Figure \ref{fig-02}):

\vspace{0.3cm}
\begin{tabular}{lcll}
\vspace{0.2cm}
{\it Step 0.} & We start with &   $Q^0_T=q(T)$ ; & \\ 
\vspace{0.2cm}
{\it Step 1.} & we add & $Q_T^{1,u}=q(u(T))$ \;and &  $Q_T^{1,r}=q(r(T))$ ; \\ \vspace{0.2cm}
{\it Step 2.} &  we add &  $Q_T^{2,(u,u)}=q(u(u(T)))$, &
$Q_T^{2,(u,r)}=q(u(r(T)))$, \\
\vspace{0.2cm}
 & &  $Q_T^{2,(r,u)}=q(r(u(T)))$, & $Q_T^{2,(r,r)}=q(r(r(T)))$ \\
\vspace{0.2cm}
{\it Step 3.} &  we add & $Q_T^{3,(u,u,u)}=q(u(u(u(T))))$, &
$Q_T^{3,(u,u,r)}=q(u(u(r(T))))$, \\
 &  & $\cdots$ & $\cdots$ 
\end{tabular}

\begin{figure}
\label{fig-02}
\centering
\includegraphics[width=7.5cm]{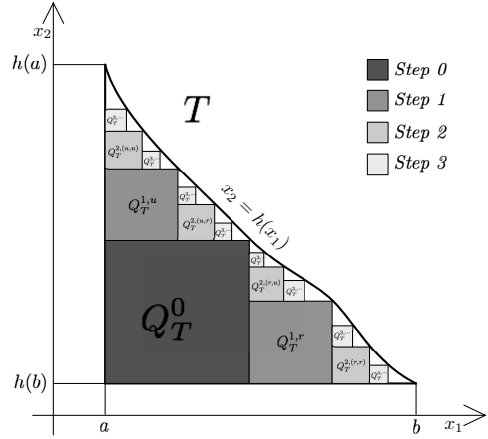}
\caption{construction of $\mathcal{Q}(T)$.}
\end{figure}
\end{rem}

Now we can define our candidate function $u:T \to \mathbb R^2$ prescribing it on any square of the type $Q_T^{m,\sigma}$ as the building block defined in Section \ref{subsec:DMP-construction} suitably rescaled and translated.
We are now going to prove that $\mathcal{F}(u)$ is finite. 
The first term of $\mathcal{F}$, $\mathcal{F}^1(u)$  
can be be bounded performing the same analysis done in \cite[Proposition 4.4]{Croce:kw}. The only difference is that instead of measuring the discontinuity of the gradient, we are measuring  $\mathcal{H}^1(\Sigma^u_\infty)$, that is indeed contained in the union of the boundaries of the squares and makes our analysis applicable. Observe that as in \cite{Croce:kw} we can consider the distance from the graph of the function $h$ defining the triangular domain, instead of the entire boundary of $T_h$. This does not create problems for the desired estimates, since we only need a bound for $\mathcal{F}^1(u)$ from above. Moreover it will be a useful information when dealing with general $\alpha$-compatible domains.

Let us now study the second term of $\mathcal{F}(u)$, namely $\mathcal{F}^{2}_{ij}(u)$. Without loss of generality, we can assume that $a=0$ and $h(b)=0$.
We will use the steps of the covering of $T$ described in Remark \ref{rem:vitaliSEE}.
Since $h(x_1)\leq h(0)-c_2x_1$, the side $l_0$ of $Q^0_T$ can be estimated as a fraction of the "height" $h(0)$ of $T$ by 
$$
l_0\leq r_H h(0), \,\,\,\,\,\,\,\,\,r_H=\frac{1}{c_2+1}\,.
$$ 
As well, since $h(x_1)\leq (b-x_1)c_1$, the side $l_0$ of $Q^0_T$ can be estimated as a fraction of the "basis" $b$ of $T$ by
$$
l_0\leq r_B b, \,\,\,\,\,\,\,\,\,r_B=\frac{1}{1+\frac{1}{c_1}}\,.
$$ 
At the second step, we consider the squares $Q^{1,u}_T$ and 
$Q^{1,r}_T$.
With the same arguments of the first step, we can say that the side of $Q^{1,r}_T$ will be  a fraction of the height $l_0$
of the triangular domain in $T \setminus Q^0_T$
on the right of $Q^0_T$. 
At the same time, the side of $Q^{1,u}_T$ will be  a fraction of the basis $l_0$
of the triangular domain in $T \setminus Q^0_T$
on the top of  $Q^0_T$. 
Therefore  
the sides of  $Q^{1,u}_T$ and $Q^{1,r}_T$ are smaller than
$$
\max\{h(0),b\}[\max\{r_B,r_H\}]^2\,.
$$
In general, at the $n^{th}$ step, we will add $2^n$ squares whose sides are smaller than 
$$
\max\{h(0),b\}[\max\{r_B,r_H\}]^n\,.
$$ 
This and (\ref{eq:stimaquadratodilatatoDMP}) allow us to give the following estimate for $\mathcal{F}^{2}_{ij}(u)$
\[
\mathcal{F}^{2}_{ij}(u) \leq \max\{h(0),b\}^{\alpha+1}\sum_{n=0}^{\infty} [2[\max\{r_B,r_H\}]^{\alpha+1}]^{n}\,.
\]
The last series is finite under the assumption 
$2[\max\{r_B,r_H\}]^{\alpha+1}<1$ and this is satisfied according to Definition \ref{defn:compatible-triang}.

\begin{rem}
In the simple case where $T=T_h$ with $h(x_1)=1-x_1$, the assumption
$2\left[\max\{r_B,r_H\}\right]^{\alpha+1}<1$ is equivalent to $\left(\frac 12\right)^{\alpha+1}\leq \frac 12$,
which is satisfied for any $\alpha>0$.
\end{rem}

\subsection{Proof of Theorem \ref{thm:main}: the general case}
\label{subsec:ProofMainThm}

The definition of the candidate solution $u$ in a general $\alpha$-compatible domain $\Omega$ is straightforward once we have performed the previous constructions. Indeed let $\Omega$ be the union of $I$ rectangles $\{R_i\}_{i=1}^I$ and $J$ rotations of $\alpha$-compatible triangular domains $R_j[T_{h_j}]$ for $j\in {1,\dots,J}$, and define $u$ using in any rectangle $R_i$ the construction described in Subsection \ref{subsec:rectangular} and in any $R_j[T_{h_j}]$ the one described in Subsection \ref{subsec:triangular}. The desired bound on the term $\mathcal{F}^1(u)$ easily follows by the previous analysis once we recall that $\Sigma^u_\infty\cap \overline R_i$ has finite $\Haus^1$-measure for any $i$ and that for any point $x\in R_j[T_{h_j}]$ we have $\dist(x,\partial \Omega) \leq \dist\left(x,R_j[G(h_j)]\right)$ where $G(h_j)$ is the graph of the function $h_j$. The bound on the second term follows adding up the estimates for each rectangle and each $\alpha$-compatible triangular domain.

\bibliographystyle{plain}
\end{document}